\documentclass[a4paper, 10pt, reqno, final]{article}

\usepackage[T1]{fontenc}		     
\usepackage[utf8]{inputenc}			 
\usepackage[american]{babel}
\usepackage{float}
\usepackage{caption}
\usepackage{subcaption}
\usepackage{cancel}


\usepackage{amsmath}
\usepackage{amssymb}
\usepackage{amsthm}
\usepackage{tikz-cd}
\usetikzlibrary{calc} 
\usepackage{multicol}
\usepackage{bbold}
	\theoremstyle{plain}
		\newtheorem{mainthm}{\textsc{Theorem}}		
				
				\newtheorem{thm}{Theorem}[section]	
						\newtheorem{cor}[thm]{Corollary}	
				
				\newtheorem{lem}[thm]{Lemma}		
						\newtheorem{prop}[thm]{Proposition}

	\theoremstyle{definition}
		\newtheorem{defn}[thm]{Definition}	
				\newtheorem{ex}[thm]{Example}		
					\theoremstyle{remark}
		\newtheorem{rem}[thm]{Remark}		
				\newtheorem{note}[thm]{Notation}		
				\numberwithin{equation}{section}
				\usepackage{mathtools}		
				\usepackage{mathrsfs}		
				
\usepackage{eucal}			
\usepackage{soul}
\usepackage{braket}			
\usepackage[all,pdf]{xy}		

\setcounter{MaxMatrixCols}{12}

\usepackage{mparhack}		
\usepackage{relsize}			

\usepackage{a4wide}		

\usepackage{booktabs}		
\usepackage{multirow}		
\usepackage{caption}		
\captionsetup{font=small, labelfont=bf}

\usepackage{rotating}

\usepackage{varioref}		
\usepackage{footmisc}
\usepackage{multicol}

\usepackage{graphicx}		
\usepackage{epsfig}

\usepackage{enumerate}		
\usepackage[colorlinks=true, linkcolor=black, citecolor=black,
urlcolor=blue]{hyperref}		
\mathtoolsset{showonlyrefs}
\newcommand{\nullity}{\mathrm{n}_0\,}
\newcommand{\noo}[1]{\overset {\mbox{%
\lower1pt\hbox{${\scriptstyle o}$}}}{\mathrm n}_{\mbox{%
\lower2pt\hbox{$\scriptstyle#1$}}}}
\newcommand{\coindex}{\mathrm{n}_+\,}
\newcommand{\itriple}{\iota}

\newcommand{\trasp}[1]{{#1}^\top}
\newcommand{\iMor}{\mathrm{n_{-}}}		
\newcommand{\R}{\mathbb{R}}		
\newcommand{\ZZ}{\mathbf{Z}}

\newcommand{\Sym}{\mathrm{S}}
\newcommand{\Mat}{\mathrm{M}}
\newcommand{\Gr}{\mathcal{G}}
\newcommand{\iCLM}{\mu^{\scriptscriptstyle{\mathrm{CLM}}}}

\DeclareMathOperator{\sgn}{sgn}		

\newcommand{\coiMor}{\mathrm{n_+}}
\newcommand{\Id}{I}
\DeclareMathOperator{\spfl}{sf}			
	
\DeclareMathOperator{\CFs}{\mathscr{CF}^{s}}


\newcommand{\Lagr}{\Lambda}



\newcommand{\cfsa}{\mathcal{CF}^{sa}}

\newcommand{\GStar}[1]{{\mathcal S}_{#1}}

 
\newcommand{\irel}{I}
\newcommand{\iindex}[1]{\mu_{\scriptscriptstyle{\mathrm{Mor}}}\left[#1\right]}
\newcommand{\coiindex}[1]{\mathrm{n_+}\left[#1\right]}
\newcommand{\Real}{\mathrm{Re}}
   
\newcommand{\Imm}{\mathrm{Im}}

\newcommand{\norm}[1]{\lVert#1\rVert}

\def\sq(H3){\sqrt{-1}}

\DeclareMathOperator{\rk}{rank}	
\def\={:=}
\def\>{\supset}
\def\<{\subset}

\def\12{\dfrac{1}{2}}
\def\0{^{\circ}}







\def\RR{{\mathbb R}}

\def\ZZ{{\mathbb Z}}







\def\m{\mu}

\def\R{\RR}



\def\Z{\ZZ}



\def\Cl{\mbox{\rm C$\ell$}}

 \DeclareMathOperator{\dist}{dist}

\DeclareMathOperator{\dom}{D}

\DeclareMathOperator{\Graph}{gr}
\newcommand{\Grn}[1]{\mathcal G({#1})}
\DeclareMathOperator{\Grass}{\mathcal S} 




\DeclareMathOperator{\image}{im}

\DeclareMathOperator{\Lag}{\mathrm{Lag}} 
\DeclareMathOperator{\Lin}{\mathscr{B}}

\DeclareMathOperator{\Sp}{Sp} 
\renewcommand{\Cl}{\mathscr{Cl}}

\DeclareMathOperator{\ev}{ev} 
\DeclareMathOperator{\Tr}{Tr}

\usepackage{bm}

\makeatletter
\def\namedlabel#1#2{\begingroup
    #2%
    \def\@currentlabel{#2}%
    \phantomsection\label{#1}\endgroup
}
\makeatother


\title{Index theory for non-compact quantum graphs}
\author{Daniele Garrisi, Alessandro Portaluri
\thanks{A.P. is partially supported by Tamkeen Portaluri 2025--26 Faculty Research Funds, Tamkeen NYU Abu Dhabi, and by GNAMPA--INDAM, Italy.}, Li Wu\thanks{L.W. is partially supported by  supported by the  National Natural Science Foundation of China NSFC N.12171281.}}

\date{\today}


\begin{document}
 \maketitle

\begin{abstract}
We develop an index theory for variational problems on non-compact quantum graphs. The main results are a spectral flow formula, relating the net change of eigenvalues to the Maslov index of boundary data, and a Morse index theorem, equating the negative directions of the Lagrangian action with the total multiplicity of conjugate instants along the edges.  

These results extend classical tools in global analysis and symplectic geometry to graph-based models. The spectral flow formula is proved by constructing a Lagrangian intersection theory in the Gelfand–Robbin quotients of the second variation of the action. 

This approach also recovers, in a unified way, the known formulas for heteroclinic, half-clinic, homoclinic, and bounded orbits of (non)autonomous Lagrangian systems.
\vskip0.2truecm
\noindent
\textbf{AMS Subject Classification:} 37E25, 58J30, 58J50, 34B45, 34L40, 35Q55.
\vskip0.1truecm
\noindent
\textbf{Keywords:} Quantum graphs, Index theory, Sturm-Liouville differential operators, Spectral flow, Maslov index, Gelfand-Robbin quotients, Cauchy data spaces. 

\end{abstract}

\section{Introduction}
Quantum graphs are mathematical structures that consist of a graph equipped with differential operators acting along its edges and suitable conditions at the vertices. Originally introduced as simplified models of quantum mechanical systems with constrained geometry, quantum graphs have become a versatile framework for studying wave propagation in branched structures. Applications arise in mesoscopic physics (e.g., quantum wires and nanotechnology), optics (e.g., photonic crystals), chemistry (e.g., the theory of conjugated molecules), and increasingly in biology and data science \cite{BK13, Kuc04, Exn08}.

A typical quantum graph consists of a set of vertices $V$ and edges $E$, where each edge $e \in E$ is associated with an interval $I_e \subset \mathbb{R}$ (possibly infinite), and a differential operator $L_e$ acting on functions defined on $I_e$. These operators are typically self-adjoint, and the vertex conditions ensure that the global operator is self-adjoint on the whole graph. The spectral theory of such operators provides insight into resonance phenomena, scattering, wave localization, and the dynamics of nonlinear evolution equations.
 
The analysis of differential equations on graphs is motivated by two parallel developments. First, from a physical viewpoint, branched systems are ubiquitous in modern science and technology. The behavior of waves in such systems can often be reduced to one-dimensional models with non-trivial coupling conditions at junctions. Quantum graphs provide a natural abstraction of such problems.

Second, from a mathematical standpoint, quantum graphs offer a rich interplay between spectral theory, functional analysis, variational calculus, and symplectic geometry. Many foundational ideas in differential geometry—such as the Morse index theorem, spectral flow, and Maslov index—have analogs in the setting of quantum graphs, but require significant adaptations due to singularities, infinite domains, and the non-manifold nature of graphs. 

While the theory of compact quantum graphs is now mature, less is known in the non-compact case. Graphs with infinite edges (e.g., rays or half-lines) arise naturally in models of open systems and waveguides. They present additional analytical difficulties related to essential spectra, lack of compact embeddings, and boundary behavior at infinity \cite{HP17, LSS18, LS20, HPW25} and references therein.

Our goal in this paper is to build a comprehensive index theory for non-compact quantum graphs. Specifically, we study variational problems defined by action functionals on graphs, characterize their critical points via Euler-Lagrange equations, and analyze the second variation to understand stability. To do this, we develop an abstract framework using symplectic reduction and Gelfand-Robbin quotients to generalize the classical Morse index and spectral flow formulas.

\subsection*{Structure of the Paper and Main Contributions}
This work makes four main contributions:
\begin{enumerate}
    \item We formulate a variational framework for differential equations on non-compact quantum graphs, including appropriate Sobolev spaces, Lagrangian functionals, and Euler-Lagrange boundary value problems.
    \item We prove a \emph{spectral flow formula} for a family of self-adjoint Sturm--Liouville operators on a graph. The spectral flow is expressed in terms of a Maslov index associated to a path of Lagrangian subspaces in a finite-dimensional symplectic quotient space.
    \item We establish a \emph{Morse index theorem}, relating the Morse index of a critical point of the action functional to the number of conjugate instants—i.e., times where nontrivial solutions to linearized boundary problems exist.
    \item We apply these results to nonlinear Schrödinger equations (NLS) on graphs and obtain stability criteria for standing wave solutions.
\end{enumerate}

The core of the paper is given by Section~\ref{sec:Morse-index-thm} and Section~\ref{sec:star-graphs} where we develop the analytic and symplectic machinery necessary to extend Morse index theory and spectral flow to the context of quantum graphs, which may include non-compact edges (e.g., half-lines). The ultimate goal is to compute the Morse index of variational problems on such graphs and relate it to a Maslov index associated to the boundary conditions.

Let $\Gr$ be a non-compact graph, and let $s \mapsto L_s$ denote a gap-continuous family of self-adjoint extensions of a Sturm--Liouville operator defined on $\Gr$. We recall that
the space of closed and unbounded operators can be equipped with the metric \(d(L_{s_1},L_{s_2}):=\hat{\delta}(\Graph(L_{s_1}),\Graph(L_{s_2}))\), where \(\hat{\delta}\) is the gap between two subspaces defined in \cite[Ch.~IV,~\S2.1]{Kat80}.

The essential objects introduced include:
\begin{itemize}
  \item The maximal operator $L^*_s$ and minimal domain $D(L_s)$;
  \item A decomposition formula $ D(L^*_s)=D(L_s)\oplus U$, where $U$ is an even dimensional closed subspace of $H$  that inherits a  symplectic structure from the Green's identity
  \item A Lagrangian subspace $\Lambda_s \subset \beta$ prescribing the boundary conditions;
  \item The Cauchy data space $V_s = \ker L^*_s$;
  \item A trace map $\Tr$ identifying $U$ with $\mathbb{R}^{2k}$ for some $k$.
\end{itemize}

\noindent
\paragraph{A Spectral Flow Formula on (non-compact) graphs.} Let $\{s\mapsto L_{s, \Lambda_s}\}$ be a gap-continuous path  of self-adjoint extensions with $\Lambda_s \in \mathrm{Lag}(U)$.  Then,
\[
\mathrm{sf}(L_{s,\Lambda_s};\ s\in[0,1]) = -\iCLM(\Tr(\Lambda_s), \Tr(p(V_s));\ s\in[0,1]),
\]
where $p(V_s)$ is the projection of the Cacuhy data space onto $U$, and $\iCLM$ denotes the Cappell--Lee--Miller Maslov index.

This result reduces the problem of tracking eigenvalue crossings of $L_{s,\Lambda_s}$  to a topological count of Lagrangian intersections in a finite-dimensional symplectic space. (Cfr. Theorem~\ref{thm:Sturm_Sf_formula-SL}  for a precise statement).

\noindent
\paragraph{The  Morse Index Theorem on (non-compact) graphs.} The Morse index of a critical point of the action functional is defined as the maximal dimension of a subspace on which the second variation is negative definite.  We briefly describe the main result and we refer the reader to Section~\ref{sec:Morse-index-thm} for further details. 

Let $A\in \Cl(L^2(\Gr))$ and we assume that the associated quadratic form $q_A$ is lower bounded. By following the standard Friedrich's extension construction arguments we get the quadratic form  $t_A$ which is the closure of $q_A$ with respect to the Friedrich's inner product.  

Let $\Lambda$ be  a Lagrangian  subspace of $(U,\rho)$ and we let $L_\Lambda$ be the self-adjoint extension of the Sturm-Liouville operator $L$ and we denote by $t_L$ and by $t_{L_\Lambda}$ be the quadratic forms constructed as before by setting $A=L$ or $A=L_\Lambda$. 

\begin{mainthm}\label{thm:triple-index-Morse-Friedrich-extensions-intro}
	Under the above notation, the following equality holds
	\[
	\iMor( L _\Lambda)-\iMor( L_{\Lambda_D})= \itriple(p(\ker  L ^*), \Lambda,\Lambda_D).
	\]
	where $\itriple$ denotes the triple index. (Cfr. Appendix~\ref{sec:Maslov} and references therein).
\end{mainthm}




\subsection*{The Morse index for Star Graphs}

For a compact star graph $\GStar{m+1}$, the difference between the Morse index and the Cappell--Lee--Miller index is governed entirely by the branching at the central vertex. More precisely, if $x$ is a non-degenerate critical point of the action functional on $\GStar{m+1}$ with $m$ leaves, then one finds that each leaf contributes the same topological correction to the index relation. This yields the explicit formula
\[
\iMor(x, \GStar{m+1}) - \iCLM(x, \GStar{m+1}) = -(m-1)\, d,
\]
where $d$ is the dimension of the configuration space on each edge.

This identity makes clear that, in the star-graph case, the discrepancy between the two indices is not a dynamical effect but a topological one, determined solely by the number of leaves and the transverse dimension.

This formula links the graph topology directly to the difference between Morse and Maslov indices.

\begin{figure}[h!]
\centering
\begin{tikzpicture}[scale=1.2]
\tikzset{every node/.style={circle, draw=black, inner sep=1pt}}
\node (c) at (0,0) {C};
\node (v1) at (2,0) {1};
\node (v2) at (1.5,1.5) {2};
\node (v3) at (0,2) {3};
\node (v4) at (-1.5,1.5) {4};
\node (v5) at (-2,0) {5};
\foreach \v in {v1,v2,v3,v4,v5}
  \draw[->] (c) -- (\v);
\end{tikzpicture}
\caption{A star graph with 5 leaves and outward orientation.}
\label{fig:star}
\end{figure}
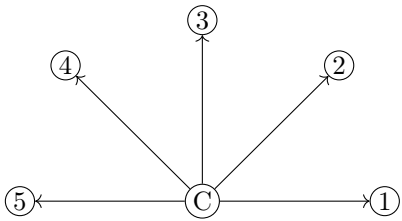

\subsection*{Two-Star Graphs}
Let $\Gr$ be the graph obtained from the two star graphs $\GStar{m_A+1}^A$ and $\GStar{m_B+1}^B$ by gluing them together at their central vertices.

Let $x$ be a critical point of a Lagrangian action on $G$. Then
\[
\iMor(x, \Gr) - \iCLM(x, \Gr) = -(m_A + m_B - 1)d.
\]

\begin{ex}
Let $\Gr = S_2 \cup S_2$ be two single-leaf star graphs joined at the centers, forming a segment. Then:
\[\iMor(x, \Gr) - \iCLM(x, \Gr) = -d,\]
recovering the classical formula for a Sturm--Liouville problem on $[0,1]$ with Dirichlet conditions.
\end{ex}

The paper is organized as follows: 

\tableofcontents


\subsubsection*{Acknowledgement}

The second named author is  deeply grateful to the New York University Abu Dhabi,  for the opportunity to pursue his research in such an inspiring and supportive environment. I also wish to thank the administrative and technical staff of the Institute, for providing excellent working conditions.   Access to the Institute’s library, seminar series, and computing facilities (including the high‑performance Mathematical workstations) provided the essential resources without which this research would not have been possible.

\section{Variational framework on graphs}\label{sec:variational-graphs}

Although we shall not need deep results from graph theory, the notion of graph is central to this paper: we refer the reader to \cite{BR12, BK13, Bol98} for a modern account on the subject.

Throughout the paper it will be assumed that a {\sc graph} is directed and connected, not necessarily simple. That is, we allow for multiple edges adjacent to the same pair of vertices and loops (i.e. edges starting and ending at the same vertex) are also allowed. More precisely, the central objects of the paper are {\sc metric graphs}, i.e. (connected) graphs \(\mathcal{G}=(V, E)\) where each edge  \(e \in E\) is associated with either a closed bounded interval \(I_{e}=\left[a_e, b_{e}\right]\) of length \(\ell_{e}>0\), or a closed half-line \(I_{e}=[a_e,+\infty)\), letting \(\ell_{e}=+\infty\) in this case or a closed half-line \(I_{e}=(-\infty, b_e]\), letting \(\ell_{e}=+\infty\) in this case.  Two edges \(e, f \in E\) joining the same pair of vertices, if present, are distinct objects in all respects: in particular, the corresponding intervals \(I_{e}\) and \(I_{f}\) need not have the same length, and must be considered distinct even in the case where \(\ell_{e}=\ell_{f}\). 

For every \(e \in E\) joining two vertices \(v_1, v_2 \in V\), a coordinate \(t_{e}\) is chosen along \(I_{e}\), in such a way that \(v_1\) corresponds to \(t_{e}=a_e\) and \(v_2\) to \(t_{e}=b_{e}\), or viceversa: if \(\ell_{e}=+\infty\), however, we always assume that the half-line \(I_{e}\) is attached to the remaining part of the graph at \(t_{e}=0\), and the vertex of the graph corresponding to \(t_{e}=+\infty\) is called a {\sc vertex at infinity}. The subset of \(V\) consisting of all vertices at infinity will be denoted by \(V_{\infty}\). With this respect, we shall always assume that
\[
\textrm{ all vertices at infinity of } \mathcal{G} \textrm{ (if any) have degree one }
\]
where, as usual, the degree of a vertex \(v\in \mathrm{V}\) is the number of edges incident at $v$ (counting twice any self-loop at $v$, of course).  Finally, we assume that the cardinality  of \(|E|=m\) as well as the one of   \(|V|=n\) is  finite.

A connected metric graph \(\mathcal{G}\) has the natural structure of a locally compact metric space, the metric being given by the shortest distance measured along the edges of the graph. Observe that
\[
\mathcal{G} \text { is compact } \Longleftrightarrow \text { no arc  of } \mathcal{G} \text { is a half-line } \Longleftrightarrow V_{\infty}=\emptyset .
\]
\begin{note}
With some abuse of notation, we often identify an arc  \(e\) with the corresponding interval \(I_{e}\) 
\end{note}
\begin{note}
\label{notation:graph}
Sometimes it will be convenient to use \emph{ad-hoc} notation for directed graphs.
A directed graph is the given of two sets, \(V\) the set of vertices, and \(E\) the
set of edges. Adjacency rules are determined by a function \(\zeta\colon E\to V\times V\); we say that \(v_1\in V\) is adjacent to \(v_2\in V\) if one \((v_1,v_2)\in\zeta(E)\)
or \((v_2,v_1)\in\zeta(E)\). Since we are dealing with \emph{directed graphs}, it makes sense to map an edge to an ordered pair. Since we are not requiring graphs to be simple, we allow \(\zeta\)
to be non-injective (allowing the existence of multiple edges adjacent to the same pair of vertices) and to have element of the diagonals in its image (allowing loops).
\end{note}
Topologically, the metric space \(\mathcal{G}\) is the disjoint union \(\bigsqcup I_{e}\) of its edges, with some of their endpoints glued together into a single point (corresponding to a vertex \(v \in V \backslash V_{\infty}\) ), according to the topology of the graph \(\mathcal{G}\) (using the same symbol \(\mathcal{G}\) for both the metric graph and the induced metric space should cause no confusion). We point out that any vertex at infinity \(\mathrm{V} \in V_{\infty}\) is of course a vertex of the graph \(\mathcal{G}\), but is not a point of the metric space \(\mathcal{G}\).

With \(\mathcal{G}\) as above, a function \(x: \mathcal{G} \rightarrow \mathbb{R}\) can be regarded as a family of functions \(\left(x_{j}\right)_{j=1}^m\), where \(x_{j}: I_j \rightarrow \mathbb{R}\) is the restriction of \(x\) to the arc  \(I_{j}=(a_j, b_j)\). Endowing each arc  \(I_{j}\) with Lebesgue measure, one can define \(L^{2}\) spaces over \(\mathcal{G}\) in the natural way, with norm
\[
\|x\|_{L^{2}(\mathcal{G})}^{2}=\sum_{j=1}^m \left\|x_{j}\right\|_{L^{2}\left(I_{j}\right)}^{2}, \quad x=\left(x_{j}\right)_{j=1}^m.
\]
So, $L^2(\Gr)\=\bigoplus_{j=1}^m L^2(I_j;\mathbb{R}^d)$.
Similarly, the Sobolev space \(W^{1,2}(\mathcal{G})\) is defined as the set of those functions \(x: \mathcal{G} \rightarrow \mathbb{R}\) such that
\[
x=\left(x_{j}\right)_{j=1}^m \textrm{ is continuous on } \mathcal{G}\textrm{ and } x_{j} \in W^{1,2}\left(I_{j};\mathbb{R}^d\right) \textrm{ for every } j=1, \ldots,m
\]
with the norm
\[
\|x\|_{W^{1,2}(\mathcal{G})}^{2}=\int_{\mathcal{G}}\left(\left|x^{\prime}(t)\right|^{2}+|x(t)|^{2}\right) d t=\sum_{j=1}^m \int_{I_{j}}\left(\left|x_{j}^{\prime}\left(t_{j}\right)\right|^{2}+\left|x_{j}\left(t_{j}\right)\right|^{2}\right)\, d t_{j}.
\]
Note that \(W^{1,2}(\mathcal{G})\) can be identified with a closed subspace (determined by the continuity of at the vertices of \(\mathcal{G}\)) of the direct sum  \(\bigoplus_{j=1}^m W^{1,2}\left(I_{j}\right)\). In terms of the coordinate system \((x_{j})_{j=1}^m\), continuity on 
\(\mathcal{G}\) means that, whenever two edges \(e, f\) meet at a vertex $v$ of \(\mathcal{G}\), the corresponding branches of \(x\) satisfy a no-jump condition. Notice that,  vertices at infinity are never involved in these continuity conditions: on the other hand, if \(x\in H^{1}(\mathcal{G})\), then automatically
\[
I_{j}=[a_j,\infty) \quad \Rightarrow \quad \lim _{t_{j} \rightarrow\infty} x_{j}\left(t_{j}\right)=0,
\]
because \(x_{j} \in W^{1,2}\left(I_{j}\right)\).  $W^{2,2}(\Gr, \R^d)$ is a collection of functions $x \in W^{1,2}(\Gr, \R^d)$ such that $x_j \in W^{2,2}(I_j, \R^d)$ for every $j=1, \ldots, m$. 

Given a collection of linear operators $L_j: W^{2,2}(I_j, \R^d) \to L^2(I_j, \R^d)$ parametrized by $j=1, \ldots, m$, we define  the operator $L: W^{2,2}(\Gr, \R^d) \to L^2(\Gr, \R^d)$ as a collection  of $L_j:W^{2,2}(I_j, \R^d) \to L^2(I_j, \R^d) $ such that 
\[
(L x)_j\= L_j  x_j \qquad j=1, \ldots, m.
\]
\begin{rem}
    We would like to remark that $L x$ is not a function because each of the $x_j$ are defined on different domains for every $1 \le j \le m$.  
\end{rem}
A {\sc Lagrangian function} $\mathscr L$ on the graph $\Gr$ is a map
such that 
\begin{description}
    \item[(i)] $\mathscr L\in \mathscr C^0(\Gr\times T\R^d,\R)$ 
    \item[(ii)] $\mathscr L_j :=\mathscr L|_{I_j} \in \mathscr C^2(I_j\times T\R^d,\R)$ for every $j=1, \ldots, m$.
\end{description}
In what follows we always assume that each $ \mathscr L_j$ satisfies the {\em Legendrian convexity condition}, namely
\begin{description}
\item[(L1)] $ L_j$ is $\mathscr C^2$-strictly convex on the fibers of $T\R^d$, that is $\partial_{vv}^2 L>0$
\item[(L2)] $ L$ is exactly quadratic in the velocities.
\end{description}
Let $A_\Gr: W^{1,2}(\Gr) \longrightarrow \R$ be the {\sc Lagrangian action functional on $\Gr$} defined by
\begin{equation}\label{eq:action-functional}
A_\Gr(x) =\int_\Gr \mathscr L\big(t, x(t), x'(t)\big)\, dt\= \sum_{j=1}^m\int_{I_j} \mathscr L_j\big(t, x_j(t), x'_j(t)\big)\, dt
\end{equation}
and let $M \subset \R^{N_a} \times \R^{N_b}$ be an affine $k$-dimensional subspace, where $k=0, \ldots, N_a+N_b$  being  $N_a\=m_a\, d$ (resp. $N_b\=m_b\, d$) and where $m_a$ (resp. $m_b$) is the total number of finite left (resp. right) endpoints. 

We set $\ev_c: W^{2,2}(\Gr, \R^d)\to \R$ be the {\bf evaluation  map at $c$} for $c=a$ or $c=b$,  defined by 
\[
\ev_a(x)\=(x_{i_1}(a_{i_1}), \ldots, x_{i_{m_a}}(a_{i_{m_a}})) \in \R^{N_a}\textrm{ and } 
\ev_b(x)\=(x_{j_1}(b_{j_1}), \ldots, x_{j_{m_b}}(b_{j_{m_b}})) \in \R^{N_b}
\]
where 
\begin{itemize}
\item $\Set{i_1, \ldots, i_{m_a}}$ is the set of indices of all  intervals having finite left endpoints
\item $\Set{j_1, \ldots, j_{m_b}}$ is the set of  indices of all  intervals having finite right endpoints 
\end{itemize}
We denote $(q,v) $ the elements of the tangent bundle $T\R^d$.
The solutions of the Euler-Lagrange equation  with the boundary condition \((\ev_a(x), \ev_b(x))\in M\) are 
the critical points of \(A_\Gr\) in the Hilbert manifold
\[
W^{1,2}_{M}(\Gr)=\Set{x \in W^{1,2}\left(\Gr\right)| \big(x(a), x(b)\big) \in M} .
\]
An {\sc extremal curve satisfying  the boundary condition \(M\)} is a solution of the  following boundary value problem
\begin{equation}\label{eq:Euler-Lagrange}
\begin{cases}
\dfrac{d}{d t} \dfrac{\partial \mathscr L}{\partial v}(t, x, x')-\dfrac{\partial \mathscr L}{\partial q}(t, x, x')=0 \qquad t \in \Gr \\[7pt]
\big(x(a), x(b)\big) \in M \\[7pt] \left(\ev_a\left[\dfrac{\partial \mathscr L}{\partial v}\big(t, x, x'\big)\right],-\ev_b\left[\dfrac{\partial \mathscr L}{\partial v}\big(t, x, x'\big)\right]\right) \in \Big(T_{(x(a), x(b))} M\Big)^\perp
\end{cases}
\end{equation}
where \((T_{(x(a), x(b))} M)^\perp\) is the orthogonal complement of \(T_{(x(a),x(b))} M\) in \( \R^{N_a} \times \R^{N_b}\). By using the Legendrian transformation defined by \(y=\frac{\partial\mathscr  L}{\partial v}(t, x, x')\) and \(H(t, y, x)=y \cdot x'-\mathscr L(t, x, x')\), the Equation\eqref{eq:Euler-Lagrange} can be converted into the Hamiltonian  equation 
\[
z'(t)=J \nabla  H(t, z(t)) \qquad t \in \Gr
\]
with \(z(t)=(y(t), x(t))=\left(\dfrac{\partial \mathscr L}{\partial v}(t, x(t), x'(t)), x(t)\right)\) where $J= \begin{pmatrix}
    0 & -\Id_d\\
    \Id_d &0
\end{pmatrix}$ denotes the standard complex structure of $(T^*\R^d, \Omega)$, where $\Omega$ denotes the standard symplectic  structure.  We set 
\begin{itemize}
\item $T^*\R^{N_a}:= (T^*\R^{d})^{m_a}$ and $\Omega_a=\oplus_{j=1}^{m_a} \Omega$
\item $T^*\R^{N_b}:= (T^*\R^{d})^{m_b}$ and $\Omega_b=\oplus_{j=1}^{m_b} \Omega$ 
\end{itemize}
Then the boundary conditions of the corresponding Hamiltonian  equation can be expressed by  a Lagrangian subspace of the symplectic space \(\left(T^* \R^{N_a} \oplus T^* \R^{N_b},-\omega_a \oplus \omega_b\right)\). 

Let $W\=T_{\big(\ev_a(x), \ev_b(x)\big)}M$ and we observe that  \(W, W^{\perp} \subset \R ^{N_a} \oplus \R ^{N_b}\). Let \(\mathcal J:=-J  \oplus J\) be the block diagonal matrix representing the complex structure of   \(T^*\R^{N_a} \oplus T^*\R ^{N_b}\).  By using the canonical embedding 
\[
\R^{N_a}\oplus \R^{N_b} \longrightarrow T^* \R^{N_a} \oplus T^*\R^{N_b}
\]
we identify the subspaces $W $ and $ W^\perp$ with $\overline  W$ and  $\overline W^\perp$, respectively.  Then we set 
\[
\Lambda_W=\mathcal J(\overline W^{\perp}) \oplus \overline  W.
\]
It is straightforward to check that \(\operatorname{dim}(\Lambda_W)= N_a+N_b\) and \(\left.(-\Omega_a \oplus \Omega_b)\right|_{\Lambda_W}=0\), so \(\Lambda_W\) is a Lagrangian subspace of \(\left(T^*\R^{ N_a} \oplus T^*\R^{N_b},-\Omega_a \oplus \Omega_b\right)\) and the boundary conditions at Equation~\eqref{eq:Euler-Lagrange} correspond to 
\[
(\ev_a(z), \ev_b(z)) \in \Lambda_W.
\]
Suppose that \(x\) is an extremal curve of \(A_\Gr\) in \(W^{1,2}_W(\Gr)\). We let 
\begin{multline}
P_j(t)=\dfrac{\partial^{2} \mathscr L_j}{\partial v^{2}}(t, x_j(t), x'_j(t)) \qquad  Q_j(t)=\dfrac{\partial^{2} \mathscr L}{\partial q \partial v}(t, x_j(t), x'_j(t))\\
\qquad \textrm{ and }\qquad  R_j(t)=\dfrac{\partial^{2} \mathscr L}{\partial q^{2}}(t, x_j(t), x'_j(t))\qquad t \in I_j\quad  \textrm{ and } \quad j=1, \ldots, m
\end{multline}
and we denote by $P, Q$ and $R$ the real-valued functions defined on $\Gr$ as follows
\[
P|_{I_j}(t)\=P_j(t) \qquad  Q|_{I_j}(t)\=Q_j(t) \qquad  R|_{I_j}\=R_j(t)\quad  j=1, \ldots, m.
\]
Then the {\sc Index form} at \(x\) is the quadratic form on $W^{1,2}_W(\Gr)$ given by
\begin{multline}
\mathcal{I}(\xi, \eta)=\int_\Gr  \Big[\langle P(t)\xi'(t)+Q(t)\xi(t),  \eta'(t)\rangle +\langle \trasp{Q}(t)\,\xi'(t),  \eta(t)\rangle +\langle R(t) \xi(t),   \eta(t)\rangle\Big]\, dt
\\[7pt] \=\sum_{j=1}^m\int_{a_j}^{b_j}\Big[\langle P_j(t)\xi_j'(t)+Q_j(t)\xi_j(t),  \eta'_j(t)\rangle +\langle \trasp{Q_j}(t)\xi'_j(t),  \eta_j(t)\rangle +\langle R_j(t) \xi_j(t),   \eta_j(t)\rangle\Big]\, dt
\end{multline}
for every \(\xi,\eta\in W^{1,2} (\mathcal{G})\).

By linearizing the Euler-Lagrange equation  at \(x=(x_1, \ldots, x_m)\), then  we get the Sturm-Liouville boundary value problem 
\begin{equation}\label{eq:3-9}
\begin{cases}
-\dfrac{d}{d t}\Big[P(t) y'(t)+Q(t) y(t)\Big]+\trasp{Q}(t) y'(t)+R(t)\, y=0 \qquad t \in \Gr\\[7pt]
\big(\ev_a(y), \ev_b(y)\big) \in W\\[7pt] \left(\ev_a\left[\dfrac{\partial \mathscr L}{\partial v}\big(t, y, y'\big)\right],-\ev_b\left[\dfrac{\partial \mathscr L}{\partial v}\big(t, y, y'\big)\right]\right) \in W^\perp
\end{cases}
\end{equation}
We observe that  \(y\) solves the boundary value problem given at Equation~\eqref{eq:3-9} if and only if \(y \in \operatorname{ker}(\mathcal{I})\). Moreover, by setting  \(z \equiv z(t)=(P(t) y'(t)+Q(t) y(t), y(t))\), we get that Equation~\eqref{eq:3-9} reduces to the  linear Hamiltonian  system
\begin{multline}\label{eq:Hamiltonian -system}
z'(t)=J B(t) z(t) \qquad t \in \Gr\qquad 
\textrm{ where }\quad 
B(t)=\begin{bmatrix}
P^{-1}(t) & -P^{-1}(t) Q(t) \\
-Q^{T} P^{-1}(t) & Q^{T}(t) P^{-1}(t) Q(t)-R(t)
\end{bmatrix}.
\end{multline}


\section{Gelfand-Robbin quotient and Cauchy data spaces}\label{sec:sf-Gelfad-Robbin}

In this section we are going to introduce the used notation and the basic definitions about {\bf   factor space of abstract boundary value} and  {\bf   Cauchy data space} that will be used throughout the whole manuscript. Our basics references are  \cite{BZ18, Fur00, Fur04, HPW25, HPW25} and references therein.

Let $(H, \langle \cdot, \cdot \rangle, \omega)$ be a (real and separable) Hilbert space with an inner product $\langle \cdot, \cdot \rangle$ and associated norm $\|\cdot\|$ and we assume that  $H$ has a symplectic form $\omega(\cdot, \cdot)$ that is a bounded, nondegenerate, skew-symmetric bilinear form.   Once the symplectic form $\omega$  has been given, up to eventually replacing  the inner product by an equivalent one, we may assume that there exists an orthogonal transformation $J: H \to H$ such that 
\begin{equation}\label{eq:1}
\omega(x,y)=\langle J\,x, y\rangle \qquad x,y \in H \quad \textrm{ and } \quad J^2=-\Id. 
\end{equation}
We denote by $\Grass(H)$ the set of all closed linear subspaces of $H$ and we set 
\[
\dist (u,V)\=\inf_{v \in V}\|u-v\|.
\]
 Given  $U,V \in \Grass(H)$, we denote by $S_U$ the unit sphere of $U$ and we set 
\begin{equation}
	\delta(U,V):=\begin{cases}
	\sup_{u \in S_U} \dist(u, V) & \textrm{ if } U \neq (0)\\[3pt]
	0 &  \textrm{ if } U = (0)
\end{cases}\quad \textrm{ and } \quad 
\widehat \delta(U,V):= \max\{\delta(U,V), \delta(V,U)\}.
\end{equation}
$\widehat \delta(U,V)$ is called the {\bf  gap} between $U$ and $V$.
 $U \in \Grass(H )$, there exists a unique orthogonal projection $P_U$ onto $U$ which is a bounded operator on $H$. Given $U, V \in \Grass(H)$, we consider the norm of the difference of the corresponding orthogonal projectors:
\begin{equation}\label{eq:DG}
d_G(U,V):= \norm{P_U- P_V}.
\end{equation}
It can be shown  that metric topology induced by $d_G$ is equivalent to the gap topology. 
\begin{note}
We denote by $\Cl(H)$ (resp. $\Cl^s(H)$) be the set of all {\bf  closed} (resp. {\bf  closed and symmetric}) and densely defined   operators\footnote{
Since  the adjoint of a densely defined linear operator is closed then  selfadjoint operators on $H$ are contained in $\Cl(H)$.} and we denote by $\Grn{\#}$ denotes the graph of the operator $\#$.
\end{note}
We observe that the gap metric induces in a natural way a metric on $\Cl(H)$ (resp. $\Cl^s(H)$) given by 
\[
d(T, S):=d(\Grn{T} , \Grn{S}) \qquad S,T \in \Cl(H)
\]
where $\Grn{\#}$ denotes the graph of the operator $\#$.  

Let $A \in \Cl^s(H)$,  $A^*\in \Cl^s(H)$ be its adjoint and we denote by  $\dom(A)$ and $\dom(A^*)$  the domain of $A$ and $A^*$, respectively. We  define the {\bf   graph inner product on $\dom(A^*)$}  as follows
\[
\langle x,y \rangle^G:=\langle x, y \rangle + \langle A^* x, A^* y\rangle \qquad x, y \in \dom(A^*)
\]
and we observe that  $\dom(A^*)$ equipped with the graph norm  becomes a Hilbert space  and $\dom(A)$ is a closed subspace. (Cfr. \cite[Example 2.2]{Fur04} for further details).  We define the (Hilbert) {\bf factor space}  or the space of  {\bf abstract boundary value} or {\bf Gelfand-Robbin quotient} as the quotient
\[
\beta(A):= \dom(A^*)/\dom(A).
\]
We equip $\beta(A)$ by the  the nondegenerate skew-symmetric bilinear form $\omega$  given by 
\begin{equation}\label{eq:symplectic-form}
\omega\big(\gamma(x), \gamma(y)\big)=  \langle x, A^* y\rangle -\langle A^*\, x, y\rangle
\end{equation}
where $\gamma:\dom(A^*) \to \beta(A)$ is the canonical projection onto the quotient defined by $\gamma(x):=[x]:= x +\dom(A)$. We refer to $\gamma$ as the {\bf factor space projection}. For $\mu \in \Grass(H)$, we  denote by $\mu^\circ$ the {\bf   $\omega$-annihilator} of $\mu$, i.e.
\[
\mu^\circ \=\Set{x \in H| \omega(x,y)=0 \textrm{ for all } y \in \mu}
\]
and we denote the orthogonal complement of $\mu$ by $\mu^\perp$. We observe that for any subspace $\mu$ the annihilator $\mu^\circ$ is closed and by the nondegeneracy of the symplectic form we also get that the map $\mu \mapsto \mu^\circ$ is idempotent. As in the finite dimensional case, the following holds.  
\begin{defn}
Under the above notation, let $\mu\in \Grass(H)$. Then the subspace $\mu$ is termed
\begin{multicols}{2}
\begin{itemize}
	\item {\bf Isotropic} if $\mu \subset \mu^\circ$
	\item  {\bf Lagrangian} if $\mu= \mu^\circ$
	\item {\bf Coisotropic} if $\mu^\circ \subset \mu$
	\item {\bf Symplectic}  if  $ \mu \oplus \mu^\circ=H$.
\end{itemize}	
\end{multicols}
\end{defn}
\begin{rem}
By taking into account the  compatibility assumption between the symplectic form, the inner product and the almost complex structure $J$, we get the following fact
\begin{itemize}
	\item If $\mu$ is Lagrangian, then $\mu$ is a closed subspace, $J\, \mu$ is also Lagrangian and $J\, \mu=\mu^\perp$. Conversely, if $\mu$ is a closed subspace and $\mu^\perp = J \, \mu$, then $\mu$ is a Lagrangian subspace.
\end{itemize}
\end{rem}
\noindent
Let $\Lag(H, \omega)$ or just $\Lag(H)$  denote the {\bf   Lagrangian Grassmannian} of the  symplectic Hilbert space $H$ space namely the set of all Lagrangian subspaces of $H$. It is well-known  that $\Lag(H)$ is an infinite dimensional differentiable manifold modeled on the Banach space of bounded and selfadjoint operators. (Cfr. \cite[Corollary 2.25]{Fur04} and references therein).  So, from now on, we regard $(\Lag(H), d_G)$ as a metric space where $d_G$ is the metric defined at Equation~\eqref{eq:DG}.


\subsection{A spectral flow formula in the Cauchy data spaces}

Let $A, A^*\in \Cl^s(H)$ with domains respectively given by $\dom(A), \dom(A^*)$, and we consider the  factor space 
\[
\beta(A)= \dom(A^*)/\dom(A).
\]
Let $\gamma: \dom(A^*) \to \beta(A) $ be  quotient map  and we observe that the subspace $\gamma (\ker A^*)$ defined by 
\[
\gamma (\ker A^*)=\Set{\gamma(x)| x \in \ker A^*}= \big(\ker A^*+ \dom(A) \big)/\dom(A). 
\]
is an {\em isotropic subspace} of the symplectic space $\beta$. We refer to $\gamma (\ker A^*)$ as the {\bf Cauchy data space of the operator $A$.}
The next well-known result gives a characterization of the selfadjointness  of an operator $A$ on $\dom$ in terms of the symplectic properties of $\gamma(\dom)$. 
\begin{lem}\label{lem:abstract_fundamental_solution}
Let $A\in \Cl^s(H)$ and let $\dom $ be denote a subspace such that $\dom(A)\subset\dom \subset \dom(A^*)$. Then the restriction of the adjoint operator  $A^*$ to the subspace $\dom$ is selfadjoint, if and only if, the subspace $\gamma(\dom)$ is a Lagrangian subspace in $\beta(A)$.	
\end{lem}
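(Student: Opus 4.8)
The plan is to deduce the statement from the standard correspondence between extensions of $A$ lying between $A$ and $A^*$ and subspaces of the Gelfand--Robbin quotient $\beta(A)$. Set $\widetilde A := A^*|_{\dom}$; since $\dom\supset\dom(A)$ is dense, $\widetilde A$ is densely defined and $\widetilde A^*$ is meaningful. The first, purely formal, step is to note that applying $*$ to the chain $A\subset\widetilde A\subset A^*$ and using $A^{**}=A$ (valid because $A$ is closed) gives $A\subset\widetilde A^*\subset A^*$; thus $\widetilde A^*$ is again a restriction of $A^*$ whose domain is intermediate between $\dom(A)$ and $\dom(A^*)$, and both $\widetilde A$ and $\widetilde A^*$ act simply as $A^*$ on their domains. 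Consequently $\widetilde A$ is selfadjoint iff $\widetilde A=\widetilde A^*$ iff $\dom=\dom(\widetilde A^*)$.

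The core computation is a symplectic description of $\dom(\widetilde A^*)$. By the definition of the adjoint, a vector $y\in\dom(A^*)$ belongs to $\dom(\widetilde A^*)$ exactly when $\langle A^*x,y\rangle=\langle x,A^*y\rangle$ for all $x\in\dom$; by the formula for $\omega$ on $\beta(A)$ at Equation~\eqref{eq:symplectic-form}, this says precisely that $\omega(\gamma(x),\gamma(y))=0$ for all $x\in\dom$, i.e.\ $\gamma(y)\in(\gamma(\dom))^{\circ}$. Since $\gamma$ maps $\dom(A^*)$ onto $\beta(A)$, applying $\gamma$ to this equivalence yields the identity $\gamma(\dom(\widetilde A^*))=(\gamma(\dom))^{\circ}$.

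To finish I would invoke the Gelfand--Robbin dictionary: because $\dom(A)=\ker\bigl(\gamma|_{\dom(A^*)}\bigr)$, every intermediate subspace $\dom'$ (with $\dom(A)\subset\dom'\subset\dom(A^*)$) satisfies $\dom'=\gamma^{-1}(\gamma(\dom'))$, so $\dom'\mapsto\gamma(\dom')$ is an inclusion-preserving bijection onto the subspaces of $\beta(A)$; in particular two intermediate subspaces coincide iff their $\gamma$-images coincide. Both $\dom$ and $\dom(\widetilde A^*)$ are intermediate, so $\dom=\dom(\widetilde A^*)$ iff $\gamma(\dom)=\gamma(\dom(\widetilde A^*))=(\gamma(\dom))^{\circ}$. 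Stringing the equivalences together: $\widetilde A$ selfadjoint $\iff\gamma(\dom)=(\gamma(\dom))^{\circ}\iff\gamma(\dom)\in\Lag(\beta(A))$. Closedness of $\widetilde A$ need not be assumed separately: in the forward direction an adjoint is automatically closed, and in the backward direction a Lagrangian subspace is closed in $\beta(A)$, hence $\dom=\gamma^{-1}(\gamma(\dom))$ is closed in the graph norm.

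The only delicate point, which I expect to be the main obstacle, is making the Gelfand--Robbin dictionary precise — in particular verifying $\dom'=\gamma^{-1}(\gamma(\dom'))$ for intermediate $\dom'$, where the hypothesis $\dom(A)\subset\dom'$ enters in an essential way, and checking that $\omega$ as defined on $\beta(A)$ is exactly the bilinear form that pairs a domain against the defect of its adjoint. Everything else is routine bookkeeping with adjoints and the defining formula \eqref{eq:symplectic-form} of $\omega$.
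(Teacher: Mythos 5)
Your proof is correct, and it is essentially the argument the paper has in mind: the paper gives no proof of its own but defers to Furutani's Proposition~6.1, whose proof runs along exactly your lines — showing $\dom(\widetilde A^{*})=\gamma^{-1}\bigl((\gamma(\dom))^{\circ}\bigr)$ via the defining identity for $\omega$ on $\beta(A)$, and using that intermediate domains (those containing $\dom(A)=\ker\gamma$) are saturated, so that equality of domains is equivalent to equality of their images in $\beta(A)$. Your handling of the key points (taking adjoints in $A\subset\widetilde A\subset A^{*}$ with $A^{**}=A$, and the saturation step $\dom'=\gamma^{-1}(\gamma(\dom'))$) is exactly where the hypotheses enter, so nothing is missing.
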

\begin{proof}
We refer the interested reader to \cite[Proposition 6.1]{Fur04} and references therein.
\end{proof}
\begin{note}
We denote by $\CFs(H)$ the set of all closed densely defined and symmetric Fredholm   operators on $H$. 
\end{note}
\begin{lem}\label{thm:beta-finit-dimensional}
Let $A\in \CFs(H)$. Then 
\begin{itemize}
\item[(a)] $\beta(A)$ is a finite dimensional symplectic vector space
	\item[(b)] $A$ has at least one selfadjoint Fredholm  extension, that is there exists a subspace $ \dom $ (closed in the graph norm topology) such that $A_{\dom}:= A^*\big \vert_{\dom}$ is selfadjoint and Fredholm .
	\end{itemize}
\end{lem}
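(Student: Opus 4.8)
The plan is to reduce both assertions to the classical theory of deficiency indices of a closed symmetric operator, combined with Lemma~\ref{lem:abstract_fundamental_solution}. Write $\mathcal{N}_{+} := \ker(A^{*} - i\,\Id)$, $\mathcal{N}_{-} := \ker(A^{*} + i\,\Id)$ and $n_{\pm} := \dim \mathcal{N}_{\pm}$. By von Neumann's formula, $\dom(A^{*}) = \dom(A) \oplus \mathcal{N}_{+} \oplus \mathcal{N}_{-}$, an orthogonal direct sum for the graph inner product $\langle \cdot, \cdot \rangle^{G}$, where we used that $A$ is closed; hence $\beta(A) = \dom(A^{*})/\dom(A)$ is linearly isomorphic to $\mathcal{N}_{+} \oplus \mathcal{N}_{-}$, and for (a) it suffices to prove $n_{\pm} < \infty$. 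For every non-real $\lambda$, symmetry of $A$ gives $\| (A - \lambda\,\Id) x \|^{2} \geq (\Imm \lambda)^{2} \| x \|^{2}$, so $A - \lambda\,\Id$ is injective with closed range, i.e.\ semi-Fredholm. Since $A$ is Fredholm by hypothesis, $\lambda \mapsto A - \lambda\,\Id$ is gap-continuous, and the semi-Fredholm operators form an open subset of $\Cl(H)$ in the gap metric on whose connected components the index is constant, there is $\varepsilon > 0$ such that $A - \lambda\,\Id$ is Fredholm for all real $\lambda \in (-\varepsilon, \varepsilon)$. The set $\Omega := \{\lambda \in \mathbb{C} : \Imm \lambda \neq 0\} \cup (-\varepsilon, \varepsilon)$ is connected and contained in the semi-Fredholm locus, so $\ind(A - \lambda\,\Id)$ is constant on $\Omega$; evaluating at $\lambda = 0$, $i$, $-i$, using $\ker(A \pm i\,\Id) = 0$, $\coker(A - i\,\Id) \cong \mathcal{N}_{-}$, and $\coker(A + i\,\Id) \cong \mathcal{N}_{+}$, we obtain $\ind(A) = -n_{-} = -n_{+}$, whence $n_{+} = n_{-} = -\ind(A) < \infty$ and $\dim \beta(A) = 2 n_{+}$.

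For the remaining part of (a) one checks that $\omega$ is nondegenerate on $\beta(A)$: if $\gamma(x)$ belongs to its radical, then $\langle A^{*} x, y \rangle = \langle x, A^{*} y \rangle$ for all $y \in \dom(A^{*})$, hence $x \in \dom\big( (A^{*})^{*} \big) = \dom(A)$ since $A$ is closed, i.e.\ $\gamma(x) = 0$. As $\beta(A)$ is finite dimensional, this makes it a symplectic vector space, proving (a). For (b): since $\dim \beta(A) = 2 n_{+}$ is even and $\omega$ is nondegenerate, $\beta(A)$ contains a Lagrangian subspace $\Lambda$. Put $\dom := \gamma^{-1}(\Lambda)$, which is closed in the graph norm because $\gamma$ is bounded there and $\Lambda$ is finite dimensional, and $\dom(A) \subseteq \dom \subseteq \dom(A^{*})$. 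By Lemma~\ref{lem:abstract_fundamental_solution}, $A_{\dom} := A^{*}|_{\dom}$ is self-adjoint. Finally, $A_{\dom}$ is Fredholm: $\dim\big( \dom/\dom(A) \big) = \dim \Lambda = n_{+} < \infty$, so $\ker A_{\dom} \subseteq \ker A^{*}$ is finite dimensional (as $A$ is Fredholm), while $\image A_{\dom}$ is the sum of the closed, finite-codimensional subspace $\image A$ and a finite-dimensional subspace, hence is itself closed of finite codimension. Thus $A_{\dom}$ is self-adjoint and Fredholm, which is (b).

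The only step that is not routine is the index computation in (a). It rests on the facts that $\lambda \mapsto A - \lambda\,\Id$ is continuous for the gap topology (a bounded perturbation of the closed operator $A$), that the semi-Fredholm operators form an open subset of $\Cl(H)$ on which the index is locally constant, and that the Fredholm locus contains a real neighbourhood of $0$, so that the upper half-plane, the lower half-plane and the point $0$ lie in a single connected component of $\Omega$. All of this is standard; see, e.g., \cite{Kat80} and the references on the gap topology cited earlier. If one prefers to bypass this computation, it may be replaced by invoking directly the known fact that a closed symmetric Fredholm operator has equal and finite deficiency indices, after which $\dim \beta(A) = 2 n_{+} < \infty$ is immediate and the rest of the argument is unchanged.
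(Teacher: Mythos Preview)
Your proof is correct and self-contained. The paper itself does not give a proof of this lemma; it simply refers the reader to \cite{HPW25a}. Your argument is therefore more than what the paper provides: you supply the standard deficiency-index route (von Neumann decomposition, constancy of the semi-Fredholm index on the connected set $\Omega$ to get $n_{+}=n_{-}=-\ind(A)<\infty$), check nondegeneracy of $\omega$ directly via $(A^{*})^{*}=A$, and then pull back a Lagrangian in $\beta(A)$ to obtain the self-adjoint Fredholm extension using Lemma~\ref{lem:abstract_fundamental_solution}. Each step is routine and the justification for the Fredholm property of $A_{\dom}$ (kernel contained in the finite-dimensional $\ker A^{*}$, range equal to $\image A$ plus a finite-dimensional piece) is clean. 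There is nothing to compare against in the paper; your write-up could serve as the missing proof.
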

\begin{proof}
We refer the interested reader to \cite{HPW25} for the proof. 
 \end{proof}
Ordinary differential operators we are interested in  are closed Fredholm  and selfadjoint and as a consequence of Lemma~\ref{thm:beta-finit-dimensional}, the involved factor space are finite dimensional. 

We introduce the following {\bf Unique Continuation Property}
\begin{itemize}
	\item {\bf (H1) } Let $A \in \Cl^s(H)$  and we assume that $\ker A^* \cap \dom(A)=\{0\}$. 
\end{itemize}
 \begin{rem}
For ordinary differential operators this is always satisfied. However, it is well-known  that this property is not necessarily true for a general elliptic partial differential operator and it is related to the Carleman, Fefferman-Phong estimates etc. 
 \end{rem}
 The first basic consequence of (H1)  is the following result. 
\begin{lem}
	Let $A \in \CFs(H)$ and we assume that (H1)  holds. Then 
	\[
	\gamma\big(\ker A^*\big)
	\]
	is Lagrangian.
\end{lem}
\begin{proof}
	We refer the interested reader  to \cite[Proposition 6.3 (a)]{Fur04}.
\end{proof}

Let $I \ni s \longmapsto A_s:= A+s\Id \in \CFs(H)$ be a path of closed symmetric and Fredholm  operators  and we observe  that 
\begin{equation}\label{eq:dominio-fisso}
\dom(A_{s}) =\dom(A)  \quad \textrm{ and } \quad \dom(A^*_{s}) =\dom(A^*) \qquad s \in I.
\end{equation} 
(Cfr. \cite[Chapter 4, Section 1.1]{Kat80} for further details). 
 By  Equation~\eqref{eq:dominio-fisso}, we immediately get that both the factor space $\beta(A_s)$ and the associated form $\omega_s$ induced by $A_s$  are  independent on $s$ and so it  coincides with the factor space of $(\beta(A), \omega)$, where   $\omega$ is given by 
 \[
 \omega(\gamma(x), \gamma(y))=\langle x, A^* y\rangle- \langle A^* x, y\rangle \qquad \textrm{ for all } \quad x, y \in \dom(A^*).
 \]
\begin{lem}
We let $V_s:= \ker A^*_{s}$ and we  assume that condition (H1)  holds.    Then the map  $s \mapsto\gamma(V_s)$ is gap continuous  in $\Lag(\beta, \omega)$. 
\end{lem}
\begin{proof}
We refer the interested reader to \cite{HPW25} for the proof. 
	 \end{proof}
The following spectral flow formula for an operator pencil in $\CFs(H)$ holds. 
\begin{prop}\label{thm:main1-abstract}
For $s\in I$, we let   $A_s=A+s\Id \in \CFs(H)$  and we assume that (H1)   holds.  Given $L \in \Lag(\beta(A), \omega)$, let us   consider the selfadjoint extension $A_L$.  Then we get 
\begin{equation}
 \spfl(A_L+sI; s \in I)=-\iCLM(L,\gamma(V_s); s \in I)
 \end{equation}
 where $\spfl$ and $\iCLM$ denote the {\sc spectral flow} and the {\sc Maslov index}, respectively. (Cfr. Appendix~\ref{sec:spectral-flow-Maslov-index}, for the definition and the basic properties).
\end{prop}
\begin{proof}
We refer the interested reader to \cite{HPW25} for the proof. 
\end{proof}


\section{The Morse index theorem  on (non)compact graphs}\label{sec:Morse-index-thm}

This section contains the core of the paper, the Morse index theorem 
in non-compact quantum graphs. An important ingredient to achieve this result is a spectral flow formula for  gap continuous paths of selfadjoint Fredholm  operators recently proved by authors in \cite{HPW25}.


Let $a,b \in \overline\R$, $I:=[0,1]$ and we denote by $\Sym_d(\R)$ the set of $(d\times d)$-symmetric matrices. For each $s \in [0,1]$, we define the differential expression $l_s$ as follows
\begin{equation}\label{eq:Sturm-Liouville-operator-Gr}
		\ell_s:=-\dfrac{d}{dt}\left(P(t)\dfrac{d}{dt}+ Q(t)\right)+ \trasp{Q}(t)\dfrac{d}{dt}+ R(t) + C_s(t)\qquad \textrm{ for } \quad (t,s)\in \Gr\times I 
\end{equation}
meaning that the restriction onto each arc  $I_j$ of the graph, i.e.  $l_s|_{I_j}$  is given by 
\begin{equation}\label{eq:Sturm-Liouville-operator-s}
		\ell_{j,s}:=-\dfrac{d}{dt}\left(P_j(t)\dfrac{d}{dt}+ Q_j(t)\right)+ \trasp{Q}_j(t)\dfrac{d}{dt}+ R_j(t) + C_{j,s}(t)\qquad \textrm{ for } \quad (t,s)\in I_j\times I 
\end{equation}
where\footnote{Here the regularity assumptions on the coefficients are meant in the sense that the  restriction on each edge has the prescribed regularity and are consequences of the assumptions on the Lagrangian.}
\begin{multline}\label{eq:assumptions-coeff}
		P^{-1} \in \mathscr C^1\big(\Gr, \Sym_d(\R)\big)  \qquad 
		Q \in\mathscr C^1\big(\Gr, \Mat_d(\R)\big) \qquad R \in \mathscr  C^0\big(\Gr,\Sym_d(\R)\big) \\[3pt] \textrm{ and }
         s\mapsto C_s \in \mathscr C^0(\Gr,\Sym_d(\R))\cap L^\infty\big(\Gr,\Sym_d(\R)\big).
\end{multline}
Under the regularity assumptions provided at Equation~\eqref{eq:assumptions-coeff}, we get that the formal differential expression $\ell_s$ given at Equation~\eqref{eq:Sturm-Liouville-operator-s} defines a   second order  linear differential operator called {\sc Sturm-Liouville operator} or {\sc SL-operator}, for brevity. (Cfr. \cite{Zet05} for further details).

We denote by  $L_s$ (resp. $L_s^*$) the {\bf minimal} (resp. {\bf maximal}) {\bf operator associated to $\ell_s$} whose domains are the following: 
\begin{multline}
\dom( L_s ^*)=\bigoplus_{j=1}^m W^{2,2}(I_j, \R^d)\cong W^{2,2}(\Gr, \R^d)  \qquad 
 \dom( L_s )=\bigoplus_{j=1}^m W^{2,2}_0(I_j, \R^d)\cong W_0^{2,2}(\Gr, \R^d)\\[5pt] \qquad \beta= W^{2,2}(\Gr, \R^d)/W^{2,2}_0(\Gr, \R^d) \cong  T^*\R^{N_a}\oplus T^*\R^{N_b}.
 \end{multline}
 We observe that both these domains are  independent on $s$. 
The isomorphism $\phi$ between $\beta$ and $T^*\R^{N_a}\oplus  T^*\R^{N_b}$  is defined by 
 \begin{multline}
 \phi(x)=\big(\ev_a(x^{[1]}), \ev_a(x) , \ev_b(x^{[1]}) , \ev_b (x)\big)\qquad \textrm{ where }\qquad 
   x^{[1]}(t)=P(t)\, x'(t)+ Q(t)\,x(t).
 \end{multline}
 We refer to the vector $x^{[1]}(c)$ as the {\sc quasi-Wronskian of $x$ at $c$.} 

 Let $f,g \in \dom(L^*)$ and we let 
\[
\omega(f,g)=\langle f, L^* g \rangle- \langle L^* f , g\rangle. 
\]
Integrating by parts, we get that 
\begin{multline}
\omega(f,g)=[f,g](b)-[f,g](a)\qquad \textrm{ where  }\\[3pt]
[f,g](c):= \Omega_c\big(\ev_c(f^{[1]}), \ev_c(f),\ev_c(g^{[1]}), \ev_c(g) \big)\quad \textrm{ for } \quad c=a,b.
\end{multline}
We refer to Appendix~\ref{sec:sf-Gelfad-Robbin} for the definition and properties of the Gelfand-Robbin quotients.
We start by introducing the following condition. 
\begin{itemize}
     \item[{\bf (H2)}] The path $I \ni s\mapsto L^*_s \in \CFs(L^2(\Gr))$ is  gap-continuous  in the space of closed densely defined symmetric Fredholm  operators on $L^2(\Gr)$.
	\end{itemize}
As a direct consequence of the existence and uniqueness theorem for regular linear odes, we get that the unique continuation property holds for the Sturm-Liouville operator $L$. 
\begin{lem} \label{lm:unique_extension}
Let $ L_s $ be  the minimal operator associated to the Sturm-Liouville operator $\ell_s$ defined at Equation~\eqref{eq:Sturm-Liouville-operator-s}. 
Then we get that the equation 
\[
L_s \,x=0
\] 
has no nontrivial solutions in $\dom(L_s)= W_0^{2,2}(\Gr,\R^d)$. 
\end{lem}
\begin{proof} By construction, it is enough to prove the result for a single edge, let's say for the $j$-th. Arguing by contradiction, we assume that there exists a nontrivial solution $0 \neq x$ in $L^2(I_j)$ and since the operator $ L_{j,s} $ is only one-sided singular at $t=a_j$. We note that for each $x_j\in \dom (L_{j,s})$ it holds that $x_j(b_j)= x'_j(b_j)=0$. For every  $d_j\in (a_j,b_j)$ the equation $ L_{j,s} \, x=0$ is a regular Sturm-Liouville equation on $[d_j,b_j]$ and by this we get that $x_j(t)=0$ for every  $t\in (d_j,b_j]$. This argument implies that   $x_j(t)=0$ for every  $t\in (a_j,b_j]$. The conclusion follows by the arbitrariness of $j$.  
\end{proof}
\begin{rem}
    Lemma~\ref{lm:unique_extension} actually implies that assumption (H1)  holds. 
\end{rem}


\subsection{A spectral flow formula on graphs}
Let $ H=L^2(\Gr)$. Then we have the decomposition:
	\begin{equation}\label{eq:splitting}
	\dom(L_s ^*) =\dom( L_s ) \oplus U 
	\end{equation}
where $U$ is a  $2(N_a+N_b)$-dimensional closed subspace of $H$.  We endow $U$ with the symplectic structure $\rho$ defined by restricting $\omega$ to $U$; in symbols
\[
\rho:= \omega|_U
\]
and we let 
\begin{equation}\label{eq:PIs-Vs}
p:\dom( L_s^*)\longrightarrow  U \qquad \textrm{ and }  \qquad V_s=\ker L_s^* 
\end{equation}
where $p$ denotes  the canonical projection onto the second factor. 

The following lemma collects the continuity properties needed in the sequel. 
They are all natural consequences of the stability theory of closed operators under perturbation, expressed in the gap topology, together with the corresponding continuity properties of graphs and induced subspaces; compare, for instance, Kato \cite[Chapter~IV, \S 2]{Kat80}. 
Recall that the gap topology on closed subspaces is the natural topology induced by the operator-norm distance between the corresponding orthogonal projections, and it provides the appropriate notion of continuous variation in the Grassmannian. 
In particular, once the family of minimal operators varies continuously in the gap sense, the associated adjoint graphs, projected boundary spaces, and the realizations determined by a gap-continuous family of Lagrangian subspaces inherit the same continuity behaviour.

\begin{lem}\label{thm:collect}
We assume condition  (H2).  Then the following hold.
	\begin{enumerate}
		\item The  path of minimal operators $s \mapsto L_s$ is gap-continuous in $H$.
		\item The path $s\mapsto \Graph\left(L_s^*|_U\right)$ is gap-continuous in $\Grass(H\times H)$.
		\item  $s\mapsto p(V_s)$ is continuous in $\Grass(U)$.
		\item  Let $s\longmapsto \Lambda_s \in \Lag(U, \rho)$ be a gap-continuous path. Then $L_{s,\Lambda_s}:= L_s^*|_{\dom( L_s ) \oplus \Lambda_s}$ is gap-continuous in $\Grass(H\times H)$.
	\end{enumerate}
\end{lem}
\begin{proof} 
By construction, for proving this result it's enough to restrict to a single edge, let's say the $j$-th. The conclusion follows from the general theory of one-sided singular SL-operators on $I_j$ constructed by authors at \cite{HPW25}. 
\end{proof}
Let  $\mathcal F:=\Set{f_1, \ldots, f_{2(N_a+N_b)}}$ be a basis of $U$ and we consider  the coordinate map  $O: U\to T^*\R^{N_a}\oplus T^*\R^{N_b}$ given by  $O^{-1}(e_i)=f_i$. We define on $T^*\R^{N_a}\oplus T^*\R^{N_b}$ the push-forward symplectic form given by  
\[
\rho^O(\cdot, \cdot):=\rho(O^{-1}\cdot ,O^{-1}\cdot)
\]
We observe that if  $s \mapsto p(V_s)$ is continuous, then the  mapping 
\[
 s\mapsto O( p(V_s))
\]
is a gap-continuous path of Lagrangian subspaces of  $(T^*\R^{N_a}\oplus T^*\R^{N_b},\rho^O)$.
We are now in position to state and to prove an abstract {\bf spectral flow formula on graphs}. Although this formula for graphs is new the proof is quite similar to the corresponding formula proved by authors in \cite{HPW25} and we include here for the sake of completeness. 
\begin{prop}\label{thm:Sturm_Sf_formula} 
Let $\Gr$ be a graph and we assume condition  (H2) holds  and that $s\mapsto \Lambda_s \in \Lag(U, \rho)$ is a gap continuous Lagrangian path. Then  the following spectral flow formula holds:
  \[
  \spfl ( L _{s,\Lambda_s},s\in I)= -\iCLM(O(\Lambda_s),  O( p(V_s)) ,\rho^O, s\in  I).
  \]
 \end{prop}

\begin{proof}
As direct consequence of Lemma~\ref{thm:collect},  we get that the map   $s\mapsto  L _{s,\Lambda_s}$ is  gap continuous  and by the previous discussion also the two Lagrangian maps $s\mapsto O(\Lambda_s)$ and  $s \mapsto O( p(V_s))$ are  continuous.  We also observe that the map $(s,r)\mapsto L_s+r\Id$ is continuous with gap topology being a bounded perturbation of the closed operator $L$. 
 
By using the localization property of the Maslov index and of the spectral flow, for  concluding the proof we only need to prove that the formula holds in a sufficiently small neighborhood of an instant $s\in I$. Without loss of generality, we can localize in a sufficiently small neighborhood of $0$.  Let's start by choosing   $\varepsilon >0$ such that $ L _{0,\Lambda_0}+\varepsilon \Id $ is invertible. Since the invertibility is an open condition, there exists $\delta>0$ such that $ L _{s,\Lambda_s}+\varepsilon \Id $ is invertible for  every $s\in [0,\delta]$. Then by the homotopy invariance property of spectral flow and of the Maslov index, we get that
\begin{multline}
	\spfl( L _{s,\Lambda_s},s\in [0,\delta])=\spfl( L _{0,\Lambda_0}+r\Id  , r\in [0,\varepsilon ])-\spfl( L _{\delta,\Lambda_\delta}+r\Id  , r\in [0,\varepsilon ])\quad \textrm{ and }\\[5pt]
	\iCLM(O(\Lambda_s) , O( p(V_s)),\rho^O, s\in [0,\delta])=\iCLM
	(O(\Lambda_{0}),O(p(V_{0,r})),\rho^O,  r\in [0,\varepsilon ])-\\
		\iCLM(O(\Lambda_{\delta}),O(p(V_{\delta,r})),\rho^O, r\in [0,\varepsilon ]).
\end{multline}
By this computation and by invoking Proposition~\ref{thm:main1-abstract}, about the spectral flow formula for operator's pencils, we immediately get that
\[
\spfl( L _{s,\Lambda_s},s\in [0,\delta])=-\iCLM(O(\Lambda_s),O(p(V_s)),\rho^O, s\in [0,\delta]).
\]
The conclusion  follows by summing up all (finite number) of local contributions to the Maslov index and to the spectral flow.
\end{proof}


\subsubsection*{An explicit construction of the coordinate map}

This paragraph is devoted to provide an explicit construction of the abstract coordinate map $O$ defined above, in terms of the solution space of the Sturm-Liouville boundary value problem. This is done by constructing an ad-hoc {\sc trace map}.

Let $l$ denote the Sturm-Liouville operator given by
\begin{equation}\label{eq:Sturm-Liouville-operator}
		\ell:=-\dfrac{d}{dt}\left(P(t)\dfrac{d}{dt}+ Q(t)\right)+ \trasp{Q}(t)\dfrac{d}{dt}+ R(t) \qquad \textrm{ for } \quad t\in \Gr. 
\end{equation}
Denoting by $L$ and $L^*$ respectively the minimal and the  maximal operators induced by $l$, we define the {\em trace map} on $\dom(L^*)$ as follows: 
\begin{equation}\label{eq:trace-map}
\Tr: D(L^*) \to T^*\R^{N_a}\oplus T^* \R^{N_b} \quad \textrm{ defined by  } \Tr(f)=\big(\ev_a(f^{[1]}),\ev_a(f), \ev_b(f^{[1]}), \ev_b(f)\big).
\end{equation}
For each $i=1, \ldots, m$, we set 
\begin{itemize}
\item $Z_{i, a_i}$ be the vector subspace generated by  smooth functions $z_k$ defined on $I_i$ such that $(z_k^{[1]}(a_i), z_k(a_i))$ form a standard basis of $\R^{2d}$ and vanishes identically on a neighborhood of $b_i$, if $a_i<\infty$
\item $(0)$ if $a_i=-\infty$
\end{itemize}
For each $j=1, \ldots, m$, we set 
\begin{itemize}
\item $Z_{j, b_j}$ be the vector subspace generated by  smooth functions $z_k$ defined on $I_j$ such that $(z_k^{[1]}(b_j), z_k(b_j))$ form a standard basis of $\R^{2d}$ and vanishes identically on a neighborhood of $a_j$, if $b_j<\infty$
\item $(0)$ if $b_j=\infty$
\end{itemize}
For each $i=1, \ldots, m$, we let $Z_i\=Z_{i, a_i}\oplus Z_{i, b_i}$ and we set 
\[
U= \bigoplus_{i=1}^m Z_i.
\]
Let us now consider the symplectic space $(U, \rho)$ and we observe that with the above choice of the basis for $U$ the coordinate map $O$ reduces to the mapping $\Tr_U$ which is the restriction to $U$ of the map $\Tr$ defined at Equation~\ref{eq:trace-map}:
\[
\Tr_U: U \to T^*\R^{N_a}\oplus T^* \R^{N_b} \quad \textrm{ defined by  } \Tr_U(f)=\big(\ev_a(f^{[1]}),\ev_a(f), \ev_b(f^{[1]}), \ev_b(f)\big).
\]
Summing  up the previous arguments we get the following result. 
\begin{thm}\label{thm:Sturm_Sf_formula-SL} 
We assume condition  (H2) holds and let $s\mapsto\Lambda_s \in  \Lag(U,\rho)$ be a gap-continuous path.   Then  the following formula holds:
  \begin{equation}\label{eq:spfl-formula-trace}
\spfl ( L _{s,\Lambda_s},s\in I)= -\iCLM(\Tr_U(\Lambda_s), \Tr_U p(V_s), \rho^{\Tr_U} , s\in  I)
\end{equation}
where $\rho^{\Tr_U}$ denotes the push-forward of the symplectic structure $\rho$ through the trace map $\Tr_U$. 
 \end{thm}
 \begin{proof}
     The proof of this result is direct consequence of Proposition~\ref{thm:Sturm_Sf_formula} and the definition of the trace map. 
 \end{proof}
 \begin{rem}
     We observe that in the case of regular Sturm-Liouville boundary value problems, the Maslov intersection index appearing at the  (RHS) of Equation~\eqref{eq:spfl-formula-trace} coincides with the usual Maslov intersection index between the Lagrangian path induced by the family of monodromy matrices of the phase flow and the path of  Lagrangian subspaces parametrizing the boundary conditions. 
 \end{rem}


\subsection{The Morse index theorem  on graphs}

Theorem~\ref{thm:Sturm_Sf_formula-SL} provides a general  spectral flow formula for a family of Sturm-Liouville differential operators defined on a garph $\Gr$ and it represents a crucial step for proving a {\bf Morse index theorem} in this  context. 

For  each $j=1, \ldots, m$ and for each $\sigma_j \in I_j$, the restriction of the differential expression $l_j$ on  $\mathscr C_0^\infty((a_j,\sigma_j), \R^d)$ is denoted by $l_{j,\sigma_j}$. Let us denote by $L_{j,\sigma_j}$ the minimal SL-operator associated to $l_{j,\sigma_j}$ and given the Dirichlet Lagrangian subspace $\Lambda_D \in \Lag(T^*\R^{N_a}\oplus T^*\R^{N_b},-\Omega_a\oplus \Omega_b)$, 
we  consider the decomposition
	\[
	\dom (L_{j,\sigma_j}^*):=\dom (L _{j,\sigma_j})\oplus Z_{j,\sigma_j}\qquad \textrm{ for }\qquad j=1, \ldots, m. 
	\]
where $Z_{j,\sigma_j}$ are constructed as before on each interval $a_j, \sigma_j)$ and we set
\[
L_{j,\sigma_j,\Lambda_D}\=L_{j,\sigma_j}^*|_{\Tr_{\sigma_j}^{-1}(\Lambda_D)}\qquad \textrm{ for }\qquad j=1, \ldots, m
\]
the selfadjoint extension, where we denoted by $\Tr_{\sigma_j}$ the map $\Tr_{\sigma_j}: \dom(L_{j,\sigma_j})^* \to T^*\R^{N_a} \oplus T^* \R^{N_b}$ defined at Equation~\eqref{eq:trace-map}.  
\begin{rem}
    It's worth observing that 
    \begin{multline}
        \dom(L_{j,\sigma_j}^*)=\bigoplus_{j=1}^m W^{2,2}((a_j, \sigma_j), \R^d) \qquad  \dom(L_{j,\sigma_j})=\bigoplus_{j=1}^m W_0^{2,2}((a_j, \sigma_j), \R^d) \qquad \textrm{ and } \qquad \\
        \dom(L_{j,\sigma_j,\Lambda_D})=\Set{(f_1, \ldots, f_m):=f \in \bigoplus_{j=1}^mW^{2,2}((a_j, \sigma_j))| f(a_j)=f(\sigma_j)=0}.
    \end{multline}
\end{rem}

\begin{thm}{\bf [Morse Index Theorem]}\label{thm:main-oneside}
Let $\Gr$ be the   graph  equipped by a Lagrangian function satisfying  conditions (L1)-(L2)-(H2). Let $x$ be a solution of the Euler-Lagrange equation under  Dirichlet boundary conditions at the  vertices. Then we have
\[
\iMor ( L _{\Gr,\Lambda_D})= \sum_{j=1}^m\sum_{\sigma \in (a_j,b_j)} \dim \ker  L_{j, \sigma,\Lambda_D}.
\]
\end{thm}
\begin{proof}
We start by observing that
\[
L_{\Gr,\Lambda_D}=\bigoplus_{j=1}^m L_{I_j,\Lambda_D}
\]
and $\dom(L_{\Gr,\Lambda_D})= \bigoplus_{j=1}^m \dom(L_{I_j,\Lambda_D})$. 
So, we only need to prove 
\[
\iMor(L_{I_j,\Lambda_D})=\sum_{\sigma \in (a_j,b_j)} \dim \ker  L_{j, \sigma,\Lambda_D}.
\]
The conclusion follows be the Morse index Theorem proved in  \cite{HPW25}. 
\end{proof}


\subsection{Difference of Morse index}

The aim of this section is to provide a Morse Index theorem for general boundary conditions. Let $A\in \Cl^s(L^2(\Gr))$ and let 
\[ 
q_A[x]=\langle Ax ,x \rangle \qquad x \in \dom(A).
\]
and we assume  $q_A$ is lower bounded\footnote{Meaning that 
\[ 
q_A[x] \ge C\, \norm{x}^2\qquad x \in \dom(A)
\]
for some positive constant $C$.
}.
By following the standard Friedrich's extension construction arguments we get the quadratic form  $t_A$ which is the closure of $q_A$ with respect to the Friedrich's inner product.  

Let $\Lambda$ be  a Lagrangian  subspace of $(U,\rho)$ and we let $L_\Lambda$ be the self-adjoint extension of the Sturm-Liouville operator $L$ and we denote by $t_L$ and by $t_{L_\Lambda}$  the quadratic forms constructed as before by setting $A=L$ or $A=L_\Lambda$. 
\begin{lem}\label{lem:orth_complement_t_L}
	   Under the assumptions of Theorem~\ref{thm:main-oneside}, the following formula holds
	
	\[
	[\dom(t_{ L })]^{t_{ L _\Lambda}} =\big[\dom(L_{\Lambda_D})+\dom( L _\Lambda)\big]\cap \ker  L ^*
	\]
    where $\dom(t_{ L })]^{t_{ L _\Lambda}}$ denotes the $t_{ L _\Lambda}$ orthogonal of $\dom(t_{L})$.
\end{lem}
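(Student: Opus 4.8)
The plan is to identify the $t_{L_\Lambda}$-orthogonal complement of $\dom(t_L)$ inside $\dom(t_{L_\Lambda})$ by a direct computation using the definition of the Friedrichs form and Green's identity, and then to recognize the resulting subspace as the prescribed intersection. First I would recall that, since $L$ is the minimal Sturm--Liouville operator, $\dom(t_L)$ is the closure of $\dom(L) = W_0^{2,2}(\Gr,\R^d)$ in the Friedrichs norm, so it equals $\dom(t_{L_\Lambda})\cap \ker(\Tr_U p)$-type data with all boundary traces vanishing; concretely $\dom(t_L)$ is characterized as those $u\in \dom(t_{L_\Lambda})$ whose Cauchy data at the finite endpoints are zero. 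Thus an element $w\in\dom(t_{L_\Lambda})$ lies in $[\dom(t_L)]^{t_{L_\Lambda}}$ iff $t_{L_\Lambda}(u,w)=0$ for all such $u$.

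Next I would unwind $t_{L_\Lambda}(u,w)=0$. For $u\in\dom(L)$ (a core for $t_L$) we have $t_{L_\Lambda}(u,w)=\langle Lu, w\rangle$, and by Green's identity this equals $\langle u, L^* w\rangle$ plus boundary terms $[u,w](b)-[u,w](a)$, which vanish because all traces of $u$ vanish. Hence $t_{L_\Lambda}(u,w)=0$ for all $u\in\dom(L)$ forces $\langle u, L^*w\rangle = 0$ for all $u\in\dom(L)$, i.e. $L^*w \perp \dom(L)$; since $\dom(L)$ is dense in $L^2(\Gr)$ this gives $L^*w=0$, so $w\in\ker L^*$. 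Conversely, if $w\in\ker L^*\cap\dom(t_{L_\Lambda})$, the same Green's identity computation (now using density of $\dom(L)$ in $\dom(t_L)$ in the Friedrichs norm, plus the fact that the boundary pairing $[\cdot,w]$ is continuous in that norm) shows $t_{L_\Lambda}(u,w)=0$ for all $u\in\dom(t_L)$. So $[\dom(t_L)]^{t_{L_\Lambda}} = \ker L^* \cap \dom(t_{L_\Lambda})$.

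It remains to show $\ker L^* \cap \dom(t_{L_\Lambda}) = \big[\dom(L_{\Lambda_D}) + \dom(L_\Lambda)\big]\cap\ker L^*$. The inclusion $\supseteq$ is clear: $\dom(L_{\Lambda_D})$ and $\dom(L_\Lambda)$ are both contained in $\dom(t_{L_\Lambda})$ (the Dirichlet form is the smallest among self-adjoint extension forms bounded below, and by the assumptions of Theorem~\ref{thm:main-oneside} the relevant form inclusions hold), so their sum is too. For $\subseteq$, take $w\in\ker L^*\cap\dom(t_{L_\Lambda})$; its trace $\Tr_U(w)\in U$ must satisfy the boundary condition defining the form domain of $L_\Lambda$, which at the level of Cauchy data means $\Tr_U(w)$ lies in the coisotropic space $\Lambda + \Lambda_D$ (the form domain of a self-adjoint extension sits between the operator domain and the domain of the adjoint, with boundary data constrained to $\Lambda+\Lambda_D$ after quotienting by $\Lambda_D$); decomposing $\Tr_U(w) = \xi + \eta$ with $\xi\in\Lambda_D$, $\eta\in\Lambda$ and solving the two corresponding boundary value problems in $\ker L^*$ (possible by Lemma~\ref{lm:unique_extension} and the surjectivity of the trace map onto $U$) exhibits $w$ as a sum of an element of $\dom(L_{\Lambda_D})$-type kernel data and an element of $\dom(L_\Lambda)$-type kernel data.

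The main obstacle I expect is the precise identification of $\dom(t_{L_\Lambda})$ in terms of boundary data: one must verify carefully that, for the Friedrichs extension associated to a general Lagrangian $\Lambda$, the form domain consists exactly of those $W^{1,2}$-type functions whose Cauchy data (in the quotient $U/\,$``half'' sense) lie in the appropriate isotropic completion, and that the boundary pairing extends continuously to this form domain — this is where the Legendrian convexity conditions (L1)--(L2) and the lower-boundedness hypothesis on $q_A$ enter, controlling the equivalence of the Friedrichs norm with the $W^{1,2}$-norm on each edge and ensuring the $W^{2,2}$ trace maps extend. Once that characterization is in hand, the rest is the bookkeeping of Green's identity and a dimension/decomposition count in the finite-dimensional symplectic space $(U,\rho)$.
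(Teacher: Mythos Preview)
Your outline follows essentially the same route as the paper: first reduce $[\dom(t_L)]^{t_{L_\Lambda}}$ to $\ker L^*\cap\dom(t_{L_\Lambda})$ using density of $\dom(L)$ in $\dom(t_L)$, then identify $\dom(t_{L_\Lambda})$ concretely. Two remarks are in order.

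First, in your reduction step you invoke Green's identity to pass from $\langle Lu,w\rangle$ to $\langle u,L^*w\rangle$, but this already presupposes $w\in\dom(L^*)$, which is what you are trying to conclude. The paper avoids this circularity: from $t_{L_\Lambda}(u,w)=\langle Lu,w\rangle=0$ for all $u\in\dom(L)$ one gets directly $w\in(\image L)^\perp=\ker L^*$, with no integration by parts needed.

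Second, what you flag as the ``main obstacle'' --- the identification of $\dom(t_{L_\Lambda})$ at the level of boundary data --- is exactly where the paper's argument is more explicit than yours. The paper writes $\dom(t_{L_\Lambda})=W_0^{1,2}(\Gr,\R^d)+\Lambda$, then uses the elliptic regularity $\ker L^*\subset W^{2,2}(\Gr,\R^d)$ together with the decomposition $W^{2,2}\cap W_0^{1,2}=W_0^{2,2}\oplus\Lambda_D=\dom(L_{\Lambda_D})$ to reach the conclusion in one line. Your trace-decomposition argument for the inclusion $\subseteq$ works too, but note that you do not need to solve boundary value problems \emph{in} $\ker L^*$: once $\Tr_U(w)=\xi+\eta$ with $\xi\in\Lambda_D$, $\eta\in\Lambda$, any lift $w_2\in\dom(L^*)$ of $\eta$ lies in $\dom(L_\Lambda)$, and then $w-w_2$ automatically lies in $\dom(L_{\Lambda_D})$.
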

\begin{proof}
	Since $ L $ has dense domain  in the Hilbert space $\dom(t_{ L })$ and $t_{ L _\Lambda}$ is a bounded form on $\dom(t_{ L _\Lambda})$, we have
	\[
	[\dom(t_ L )]^{t_{L_\Lambda}}=[\dom( L )]^{t_{ L _\Lambda}}=\{y\in \dom(t_{ L _{\Lambda}})|t_{ L _\Lambda}[x,y]=0 \textrm{ for all } x\in \dom( L )\}.
	\]
	Let $y\in \dom(t_{ L _\Lambda})$ and let $(y_n)_{n \in N}\subset \dom( L _\Lambda)$  be a sequence such that  $y_n\to y$ in $\dom(t_\mathcal {L_\Lambda})$. Then we have
	\[
	t_{ L _\Lambda}[x,y]=\lim_{n\to\infty} t_{ L _\Lambda}[x,y_n]=\lim_{n\to \infty} < L x,y_n>=< L x,y> \qquad \forall\, x\in \dom( L ).
	\]
	By this, we get that 
	It follows that 
	\begin{multline}
		[\dom(t_ L )]^{t_{ L _\Lambda}}=(\image  L )^\perp \cap \dom(t_{ L _\Lambda})=\ker  L ^*\cap \dom(t_{ L _\Lambda})\\=\ker  L ^*\cap(W^{1,2}_0(\Gr, \R^d)+\Lambda)=W^{2,2}(\Gr,\R^d)\cap (W^{1,2}_0(\Gr, \R^d)+\Lambda)\cap \ker L^*\\
        =(\Lambda + W^{2,2}(\Gr, \R^d) \cap W_0^{1,2}(\Gr, \R^d))\cap \ker L^*
		=(W_0^{2,2}(\Gr, \R^d)+\Lambda_D+\Lambda)\cap \ker L^*\\= 
		\big[\dom( L_{\Lambda_D})+\dom( L_\Lambda)\big]\cap \ker  L ^* .
	\end{multline}
	This concludes the proof.
\end{proof}

\begin{lem}\label{lem:index_orth_compl}
	Under the above notation,  the following holds
	\begin{equation}\label{eq:Q-restricted-WQ}
	\iMor\left(t_{ L_\Lambda}|_{[\dom(t_{L} )]^{t_{L_\Lambda}}}
	\right)=\coiMor\,\big[Q(p(\ker  L ^*),\Lambda;\Lambda_D)\big]
	\end{equation}
	where  $Q$ denotes the quadratic form appearing at Definition~\ref{def:kashi},
\end{lem}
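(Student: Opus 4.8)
The plan is to use Lemma~\ref{lem:orth_complement_t_L} to realize the finite-dimensional space $[\dom(t_L)]^{t_{L_\Lambda}}$ inside the Gelfand--Robbin quotient $U$, and then to compute the restricted form $t_{L_\Lambda}|_{[\dom(t_L)]^{t_{L_\Lambda}}}$ by a Green's-identity argument, recognizing it — up to the sign convention of Definition~\ref{def:kashi} — as the Kashiwara form $Q(p(\ker L^*),\Lambda;\Lambda_D)$. First, by Lemma~\ref{lem:orth_complement_t_L} we have $[\dom(t_L)]^{t_{L_\Lambda}}=[\dom(L_{\Lambda_D})+\dom(L_\Lambda)]\cap\ker L^*$; in the splitting $\dom(L^*)=\dom(L)\oplus U$ one has $\dom(L_\Lambda)=\dom(L)\oplus\Lambda$ and $\dom(L_{\Lambda_D})=\dom(L)\oplus\Lambda_D$, so the canonical projection $p$ carries this space into $p(\ker L^*)\cap(\Lambda+\Lambda_D)$. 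By Lemma~\ref{lm:unique_extension} we have $\ker L=0$, hence $p$ is injective on $\ker L^*$ and restricts to a linear isomorphism $\Phi\colon[\dom(t_L)]^{t_{L_\Lambda}}\longrightarrow p(\ker L^*)\cap(\Lambda+\Lambda_D)$, whose target is exactly the space on which $Q(p(\ker L^*),\Lambda;\Lambda_D)$ is defined in Definition~\ref{def:kashi}. Since $\mathrm{n}_\pm$ are invariant under linear isomorphism, it suffices to identify the form transported by $\Phi$ with a sign times $Q$.

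Second, to compute the form, fix $f\in[\dom(t_L)]^{t_{L_\Lambda}}\subseteq\ker L^*$ and a decomposition $p(f)=\alpha+\beta$ with $\alpha\in\Lambda$, $\beta\in\Lambda_D$; choose $a\in\dom(L_\Lambda)$ with $p(a)=\alpha$ and set $b:=f-a$, so that $b\in\dom(L^*)$ with $p(b)=\beta$, i.e.\ $b\in\dom(L_{\Lambda_D})$, and $b\in\dom(t_{L_\Lambda})$ since $f,a\in\dom(t_{L_\Lambda})$. The structural observation is that $\dom(L_{\Lambda_D})\subseteq\dom(t_L)$: an element of $W^{2,2}(\Gr,\R^d)$ vanishing at all finite endpoints lies in $W^{1,2}_0(\Gr,\R^d)$, which is the form domain of the Friedrichs extension of $L$. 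As $f$ is $t_{L_\Lambda}$-orthogonal to $\dom(t_L)$, this gives $t_{L_\Lambda}[f,b]=0$. Combining this with the representation identity $t_{L_\Lambda}[f,a]=\langle f,L_\Lambda a\rangle=\langle f,L^*a\rangle$ (valid because $a\in\dom(L_\Lambda)$, $f\in\dom(t_{L_\Lambda})$), the Green formula $\langle f,L^*a\rangle-\langle L^*f,a\rangle=\omega(f,a)$ with $L^*f=0$, and the fact that $\omega$ descends to $\rho$ on $U$, one obtains
\[
t_{L_\Lambda}[f,f]=t_{L_\Lambda}[f,a]+t_{L_\Lambda}[f,b]=\omega(f,a)=\rho\big(p(f),p(a)\big)=\rho(\alpha+\beta,\alpha)=\rho(\beta,\alpha),
\]
using $\rho|_\Lambda=0$. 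The same computation for two elements yields $t_{L_\Lambda}[f,g]=\rho(\beta_f,\alpha_g)$; this is independent of the chosen decompositions because $\rho|_\Lambda=0$ and $\rho|_{\Lambda_D}=0$, and it is symmetric because $p(\ker L^*)$ is $\rho$-isotropic — indeed $\rho(p(f),p(g))=\omega(f,g)=0$ for $f,g\in\ker L^*$ by Green's identity — which forces $\rho(\beta_f,\alpha_g)=\rho(\beta_g,\alpha_f)$.

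Third, under $\Phi$ the form $t_{L_\Lambda}|_{[\dom(t_L)]^{t_{L_\Lambda}}}$ is thus carried to the symmetric form $(u,w)\mapsto\rho(u_{\Lambda_D},w_\Lambda)$ on $p(\ker L^*)\cap(\Lambda+\Lambda_D)$, which by Definition~\ref{def:kashi} equals $-Q(p(\ker L^*),\Lambda;\Lambda_D)$. Since $\mathrm{n}_-(-Q)=\mathrm{n}_+(Q)$ and $\mathrm{n}_\pm$ are preserved by $\Phi$, this yields $\iMor\big(t_{L_\Lambda}|_{[\dom(t_L)]^{t_{L_\Lambda}}}\big)=\coiMor\big[Q(p(\ker L^*),\Lambda;\Lambda_D)\big]$, as claimed. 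The step I expect to be the genuine obstacle is the second one: one must carefully pass between the operator picture and the strictly larger form picture via the representation theorem for closed lower-bounded forms, and verify the two structural facts used there, namely $\dom(L_{\Lambda_D})\subseteq\dom(t_L)$ and that $\omega$ induces the symplectic form $\rho$ on the Gelfand--Robbin quotient $U$; once these are in place, the identification with the form of Definition~\ref{def:kashi} is only a matter of matching signs.
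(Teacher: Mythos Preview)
Your proof is correct and follows essentially the same approach as the paper: both identify $[\dom(t_L)]^{t_{L_\Lambda}}$ with $p(\ker L^*)\cap(\Lambda+\Lambda_D)$ via Lemma~\ref{lem:orth_complement_t_L}, decompose elements along $\dom(L_\Lambda)$ and $\dom(L_{\Lambda_D})$, and reduce $t_{L_\Lambda}$ to the symplectic pairing $-\omega(p_1,q_2)$ on the quotient via Green's identity. Your use of the orthogonality $t_{L_\Lambda}[f,b]=0$ (from $b\in\dom(L_{\Lambda_D})\subseteq\dom(t_L)$) is a mild streamlining of the paper's four-term expansion, and your explicit sign-matching with Definition~\ref{def:Q-Dui76} is more detailed than the paper's, but the argument is the same.
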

\begin{proof}
	For $z_1,z_2 \in \ker  L ^*\cap [\dom( L _\Lambda)+\dom( L_{\Lambda_D})]$, let $z_i=p_i+q_i$  with $p_i\in \dom( L_\Lambda)$, $q_i\in \dom( L_{\Lambda_D})$.
	Then we have
	\[
	t_{ L_\Lambda}(z_1,z_2)=t_{ L_\Lambda}(p_1,p_2)+t_{ L_\Lambda}(p_1,q_2)+t_{ L_\Lambda}(q_1,p_2)+t_{ L_\Lambda}(q_1,q_2).
	\]
	Moreover
	\[
	t_{ L_\Lambda}(u,v)=\langle  L_\Lambda u, v\rangle_H \qquad \forall\, u\in \dom( L_\Lambda), \quad \forall\, v\in \dom(t_{ L_\Lambda}).
	\]	
	Since $t_{L_\Lambda}|_{\dom(t_{ L })}=t_{ L }$, then  we have
	\[
	t_{ L _\Lambda}(u,v)=t_{ L }(u,v)=\langle  L _D u,v \rangle_H \qquad  \forall\, u,v\in \dom( L_{\Lambda_D}).
	\]
	Now
	\begin{multline}
		t_{ L_\Lambda}(z_1,z_2)=\langle  L_\Lambda p_1,p_2\rangle+\langle  L _\Lambda p_1,q_2 \rangle +\langle  q_1,  L _\Lambda p_2 \rangle +\langle  L _D q_1,q_2 \rangle \\
		=\langle  L ^* p_1,p_2\rangle+\langle L^* p_1,q_2 \rangle +\langle  q_1,  L ^* p_2 \rangle +\langle  L ^* q_1,q_2 \rangle \\
		=( L ^*(p_1+q_1),p_2+q_2)-(p_1, L ^* q_2)+( L ^* p_1,q_2)
	\end{multline}
	Since $u_1=p_1+q_1\in \ker  L ^*$, then we have
	\[
	t_{ L _\Lambda}(z_1,z_2) =-(p_1, L ^* q_2)+( L ^* p_1,q_2)=-\omega(p_1,q_2).
	\]
	Let us now consider the bijection $p: \ker  L ^*\cap [\dom ( L_\Lambda) +\dom( L_{\Lambda_D})\big]\to p (\ker  L ^*)\cap(\Lambda +\Lambda_D)$.
	Then we can conclude that 	
	\[
	\iMor\left(t_{L_\Lambda}\Big\vert_{[\dom(t_L)]^{t_{L_\Lambda}}}
	\right)=\coiMor \Big[Q(p(\ker  L ^*),\Lambda;\Lambda_D)\Big]
	\]
	concluding the proof.
\end{proof}
\begin{lem}\label{thm:general_Morse _diff}
	Let $\mathcal Q$ be a quadratic form on linear space $\mathcal V$,  $\mathcal W$ be a closed  subspace of $\mathcal V$ and let 
	\[
	\mathcal W^{\mathcal Q}=\Set{v\in\mathcal V| b(w,v)=0,\  \forall w\in \mathcal W }
	\]
	where $b$ is the bilinear form induced by $\mathcal Q$ through the polarization identity and 	and we set 
	\[
	\ker \mathcal  Q= \mathcal V^{\mathcal Q}.
	\]
	We assume  that $\mathcal W^{\mathcal {QQ}}=\mathcal W+\ker \mathcal Q$ and that $\iMor(\mathcal Q|_{\mathcal W})<\infty$.  Then the following formula holds: 
	\[
	\iMor(\mathcal Q|_{\mathcal V})-\iMor(\mathcal Q|_{\mathcal W})=\iMor(\mathcal Q|_{\mathcal W^{\mathcal Q}}) +\dim\Big[ (\mathcal W\cap \mathcal W^{\mathcal Q}+\ker \mathcal Q)/\ker \mathcal Q\Big].
	\]
\end{lem}
\begin{proof}
For the proof we refer the interested reader to \cite[Theorem 3.1]{HWY20}. 
\end{proof}

\begin{thm}\label{thm:triple-index-Morse-Friedrich-extensions}
	We assume that $\iMor( L_{\Lambda_D})<\infty$.  Under the above notation, the following equality holds
	\[
	\iMor( L _\Lambda)-\iMor( L_{\Lambda_D})= \itriple(p(\ker  L ^*), \Lambda,\Lambda_D).
	\]
	where $\itriple$ denotes the triple index. (Cfr. Appendix~\ref{sec:Maslov} and references therein).
\end{thm}

\begin{proof}
	Let $\gamma:\dom(  L ^*)\to \beta( L )$ be the projection onto the factor space. By invoking  Lemma~\ref{thm:general_Morse _diff}, for proving the theorem, we only  need to compute
	\[
	\dim \Big[\big(\dom(t_{ L })\cap [\dom(t_{ L })]^{t_{ L _\Lambda}}+\ker t_{ L _\Lambda} \big)/\ker t_{ L _\Lambda}\Big].
	\]
	By arguing as in the proof of Lemma~\ref{lem:orth_complement_t_L}, we get that $\ker t_{ L _\Lambda}=\ker  L _\Lambda$ and then by Lemma~\ref{lem:orth_complement_t_L}, we get
	\begin{align}
		\dom(t_{ L })\cap [\dom(t_{ L })]^{t_{ L _\Lambda}}=\dom(t_ L )\cap \ker  L ^*\cap \dom( t_{ L _\Lambda})=\dom(t_ L )\cap \ker  L ^*=\ker  L_{\Lambda_D}.
	\end{align}
	Then, finally 
	\begin{multline}\label{eq:int-1}
		\big(\dom(t_{ L })\cap [\dom(t_{ L })]^{t_{  _\Lambda}}+\ker t_{ L _\Lambda} \big)/\ker t_{ L _\Lambda}\\=(\ker  L_{\Lambda_D}+\ker t_{ L _\Lambda})/\ker t_{ L_\Lambda}\cong \ker  L_{\Lambda_D}/(\ker  L_{\Lambda_D}\cap \ker  L_\Lambda).
	\end{multline}
	By counting dimensions, we infer that
	\begin{equation}\label{eq:int-3}
		\dim\Big[ \big(\dom(t_{ L })\cap [\dom(t_{ L })]^{t_{ L _\Lambda}}+\ker t_{ L _\Lambda} \big)/\ker t_{ L _\Lambda}\Big] =\dim \ker L_{\Lambda_D} -\dim(\ker  L_{\Lambda_D}\cap \ker  L_\Lambda).
	\end{equation}
	Let $p$ be the map $\dom( L ^*)\to U$, then we get
	\begin{equation}\label{eq:penultima}
	\dim\Big[\big(\dom(t_{ L })\cap [\dom(t_{ L })]^{t_{ L _\Lambda}}+\ker t_{ L _\Lambda} )/\ker t_{ L _\Lambda}\big] =\dim\big[p (\ker  L^*)\cap D\big] -\dim\big[  p(\ker  L^*)\cap \Lambda_D\cap \Lambda\big]
	\end{equation}
Now, by setting $\mathcal Q=t_{L_\Lambda}$, $\mathcal W=\dom(t_{L})$, we get that the (LHS) at Equation~\eqref{eq:Q-restricted-WQ} coincides with the term $\mathcal Q|_{\mathcal W^{\mathcal Q}}$. Moreover, we have 
\[
 (\mathcal W\cap \mathcal W^{\mathcal Q}+\ker \mathcal Q)/\ker \mathcal Q=([\dom(t_{ L })]^{t_{ L _\Lambda}}+\ker t_{ L _\Lambda} \big)/\ker t_{ L _\Lambda}.
\]
So, in particular
\begin{multline}\label{eq:ultima}
    \iMor(\mathcal Q|_{\mathcal W^{\mathcal Q}}) +\dim\Big[ (\mathcal W\cap\mathcal  W^{\mathcal Q}+\ker \mathcal Q)/\ker \mathcal Q\Big]\\ = \iMor\left(t_{L _\Lambda}|_{[\dom(t_{L} )]^{t_{L_\Lambda}}}\right)+ \dim \Big[\big(\dom(t_{ L })\cap [\dom(t_{ L })]^{t_{ L _\Lambda}}+\ker t_{ L _\Lambda} \big)/\ker t_{ L _\Lambda}\Big]\\
    =\coiMor\,\big[Q(p(\ker  L ^*),\Lambda;\Lambda_D)\big]  +
    \dim\big[p (\ker  L^*)\cap \Lambda_D\big] -\dim\big[  p(\ker  L^*)\cap \Lambda_D\cap \Lambda\big]
\end{multline}
    where the last equality follows by Equation~\eqref{eq:penultima} and Equation~\eqref{eq:Q-restricted-WQ}. Now, by taking into account Definition~\ref{def:kashi}, we get   that the last member of Equation~\eqref{eq:ultima} coincides with the triple index $\itriple(p(\ker  L ^*), \Lambda,\Lambda_D)$. By Lemma~\ref{thm:general_Morse _diff}, the first member of Equation~\eqref{eq:ultima} coincides with the following difference $  \iMor(\mathcal Q|_{\mathcal V})-\iMor(\mathcal Q|_{\mathcal W})$ where $\mathcal V=\dom(t_{L_\Lambda})$.  Let us now observe that $\mathcal V=\dom(T_{\Lambda_D})$. This is because $\dom (L)= W_0^{2,2}(\Gr, \R^d)$.  Since $t_L$ is equivalent to the norm of $W^{1,2}(\Gr, \R^d)$, we get that $\dom(t_L)=W_0^{1,2}(\Gr, \R^d)$.
The conclusion follows by observing that 
\[
\iMor(t_{L_\Lambda})=\iMor(L_\Lambda) \qquad \textrm{ and } \qquad \iMor(t_{L_{\Lambda_D}})=\iMor(L_{\Lambda_D}).
\]
\end{proof}

\begin{lem}\label{lem:triple_diff_circle_permu}
\[\itriple(U,V,W)-\itriple(V,W,U)=\dim(U\cap W) -\dim(V\cap U)
\]
or equivalently 
\[
\itriple(U,V,W)-\dim(U\cap W)=\itriple(V,W,U) -\dim(V\cap U)
\]
\end{lem}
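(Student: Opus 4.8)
\noindent\emph{Plan.} The idea is to strip off the dimension corrections built into the triple index, reducing the statement to a symmetric fact about a single quadratic form, and then to obtain that fact from the cyclic symmetry of an auxiliary form on the ``sum--zero'' subspace of $U\oplus V\oplus W$. Recall from Definition~\ref{def:kashi} that, for Lagrangian subspaces $\lambda_1,\lambda_2,\lambda_3$ of a symplectic vector space with form $\omega$,
\[
\itriple(\lambda_1,\lambda_2,\lambda_3)=\coiMor\big[Q(\lambda_1,\lambda_2;\lambda_3)\big]+\dim(\lambda_1\cap\lambda_3)-\dim(\lambda_1\cap\lambda_2\cap\lambda_3),
\]
where $Q(\lambda_1,\lambda_2;\lambda_3)$ is the quadratic form $x\mapsto\omega(x_{\lambda_2},x_{\lambda_3})$ on $\lambda_1\cap(\lambda_2+\lambda_3)$, with $x=x_{\lambda_2}+x_{\lambda_3}$ any splitting such that $x_{\lambda_2}\in\lambda_2$, $x_{\lambda_3}\in\lambda_3$. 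Since the two displayed versions of the lemma are trivially equivalent, it suffices to prove the second; inserting the formula above and cancelling the permutation-invariant term $\dim(U\cap V\cap W)$, that version becomes equivalent to
\[
\coiMor\big[Q(U,V;W)\big]=\coiMor\big[Q(V,W;U)\big].
\]

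\noindent\emph{The auxiliary form.} Set $\Delta:=\{(a,b,c)\in U\oplus V\oplus W\,:\,a+b+c=0\}$ and let $\Phi(a,b,c):=\omega(a,b)+\omega(b,c)+\omega(c,a)$ on $U\oplus V\oplus W$; this $\Phi$ is manifestly invariant under the cyclic relabelling $U\to V\to W\to U$. Eliminating one coordinate through $a+b+c=0$ one computes $\Phi|_\Delta=3\,\omega(a,b)=3\,\omega(b,c)=3\,\omega(c,a)$. The three coordinate projections of $\Delta$ map onto $U\cap(V+W)$, $V\cap(W+U)$, $W\cap(U+V)$, and comparing with the definition of $Q$ these three identities show that $\Phi|_\Delta$ coincides with $3\,Q(U,V;W)$, $3\,Q(V,W;U)$, $3\,Q(W,U;V)$ pulled back along the respective projection.

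\noindent\emph{Conclusion.} The kernels of these projections are $\{(0,b,-b):b\in V\cap W\}$, $\{(a,0,-a):a\in W\cap U\}$ and $\{(a,-a,0):a\in U\cap V\}$, and the heart of the argument is that each of them lies in the radical of $\Phi|_\Delta$. This is the point where one genuinely uses that $U,V,W$ are Lagrangian rather than merely isotropic: from the formulas for $\Phi|_\Delta$, the pairing of such a kernel vector against a general element of $\Delta$ reduces to a term of the shape $\omega\big(\lambda_i\cap(\lambda_j+\lambda_k),\lambda_j\cap\lambda_k\big)$, which vanishes because every vector of $\lambda_j+\lambda_k$ is $\omega$-orthogonal to $\lambda_j\cap\lambda_k$. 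Granting this, passage to the quotient by a subspace of the radical does not change $\coiMor$, nor does multiplication by the positive constant $3$; hence $\coiMor(\Phi|_\Delta)$ equals each of $\coiMor[Q(U,V;W)]$, $\coiMor[Q(V,W;U)]$ and $\coiMor[Q(W,U;V)]$. In particular $\coiMor[Q(U,V;W)]=\coiMor[Q(V,W;U)]$, which by the first paragraph is the assertion.

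\noindent\emph{Main obstacle.} The only step I expect to require care is the verification that each projection kernel is $\Phi|_\Delta$-null against \emph{all} of $\Delta$ (self-isotropy of the kernel being automatic), since it forces one to invoke the Lagrangian hypothesis through the identity ``symplectic-orthogonal of a sum equals intersection of symplectic-orthogonals''. Alternatively, one can bypass the computation by quoting from Appendix~\ref{sec:Maslov} that $\coiMor(Q(\lambda_1,\lambda_2;\lambda_3))-\iMor(Q(\lambda_1,\lambda_2;\lambda_3))$ is, up to a global sign, the antisymmetric (hence cyclically invariant) H\"ormander--Kashiwara triple signature, together with the elementary remark that the rank of $Q(\lambda_1,\lambda_2;\lambda_3)$ is already symmetric in $\lambda_1,\lambda_2,\lambda_3$.
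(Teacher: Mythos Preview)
Your reduction is exactly the paper's: expand both triple indices via Definition~\ref{def:kashi}, cancel the common term $\dim(U\cap V\cap W)$, and reduce to $\coiMor[Q(U,V;W)]=\coiMor[Q(V,W;U)]$. The paper then simply invokes the cyclic invariance \eqref{eq:invariance_Q} (quoted from \cite[Lemma~3.2]{ZWZ18}) and stops.

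What you do in addition is supply a self-contained proof of that cyclic invariance via the Kashiwara construction on the sum-zero subspace $\Delta\subset U\oplus V\oplus W$. Your computation is correct: on $\Delta$ one has $\Phi|_\Delta=3\,\omega(a,b)=3\,\omega(b,c)=3\,\omega(c,a)$, the three coordinate projections realise the three forms $Q(\cdot,\cdot;\cdot)$ up to the positive factor $3$, and the projection kernels lie in the radical because $(V+W)^{\omega}\supseteq V\cap W$ for Lagrangian $V,W$. So your argument is sound; it is just more than the paper needs, since the paper already records \eqref{eq:invariance_Q} in the appendix. Your closing ``alternatively'' remark is in fact precisely the route the paper takes.
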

\begin{proof}
By the definition of triple index
\begin{align*}
\itriple(U,V,W)=\coiMor \big[  Q(U,V;W)\big]+\dim(U\cap W)-\dim(U\cap V\cap W)\\
\itriple(V,W,U)=\coiMor \big[  Q(V,W;U)\big]+\dim(U\cap V)-\dim(U\cap V\cap W).
\end{align*}
Since $\coiMor[Q]$ is invariant under circular permutation, the lemma follows.
\end{proof}

\begin{cor}\label{thm:differenza-bc}
Consider a decomposition $\dom(L^*)=\dom(L)\oplus U$ with $\ker L^*\subset U$. Let $\Lambda_0,\Lambda_1\in \Lag(U,\rho)$.
Let $\Lambda_D=U\cap \dom(L_{\Lambda_D})$ where $L_{\Lambda_D}$ is the Friedrich extension of $L$.
Then we have
	\[
	\iMor( L _{\Lambda_1})-\iMor( L_{\Lambda_0})= 
    \itriple(p(\ker L^*),\Lambda_1,\Lambda_0)-\itriple(\Lambda_1,\Lambda_0,\Lambda_D).
	\]
\end{cor}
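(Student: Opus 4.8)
The plan is to obtain the formula from two applications of Theorem~\ref{thm:triple-index-Morse-Friedrich-extensions}, using the Friedrichs extension as a common reference, and then to reduce the residual computation to a purely symplectic identity for the triple index.

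First I would apply Theorem~\ref{thm:triple-index-Morse-Friedrich-extensions} to the self-adjoint extension $L_{\Lambda_1}$ and then to $L_{\Lambda_0}$, which gives
\[
\iMor(L_{\Lambda_1})-\iMor(L_{\Lambda_D})=\itriple\big(p(\ker L^*),\Lambda_1,\Lambda_D\big)\quad\text{and}\quad \iMor(L_{\Lambda_0})-\iMor(L_{\Lambda_D})=\itriple\big(p(\ker L^*),\Lambda_0,\Lambda_D\big).
\]
The hypothesis $\ker L^*\subset U$ of the statement is exactly what guarantees that these constructions go through for the abstract decomposition $\dom(L^*)=\dom(L)\oplus U$: it forces $p|_{\ker L^*}=\id$, hence $p(\ker L^*)=\ker L^*$, which is $\rho$-isotropic by the Green identity and, by the standard count of deficiency indices, of half the dimension of $U$, i.e. a genuine Lagrangian of $(U,\rho)$, so that all three entries of each triple index are honest Lagrangians. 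Subtracting the two identities cancels $\iMor(L_{\Lambda_D})$ and reduces the corollary to the symplectic identity
\[
\itriple\big(W,\Lambda_1,\Lambda_D\big)-\itriple\big(W,\Lambda_0,\Lambda_D\big)=\itriple\big(W,\Lambda_1,\Lambda_0\big)-\itriple\big(\Lambda_1,\Lambda_0,\Lambda_D\big),\qquad W:=p(\ker L^*),
\]
among the four Lagrangians $W,\Lambda_1,\Lambda_0,\Lambda_D\in\Lag(U,\rho)$.

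To prove this identity I would rewrite it in the alternating form
\[
\itriple(W,\Lambda_1,\Lambda_0)-\itriple(W,\Lambda_1,\Lambda_D)+\itriple(W,\Lambda_0,\Lambda_D)-\itriple(\Lambda_1,\Lambda_0,\Lambda_D)=0,
\]
in which one recognizes the cocycle (chain-rule) relation for the Kashiwara/Hörmander triple index obtained by deleting one of the four Lagrangians at a time with alternating signs; this relation is available from Appendix~\ref{sec:Maslov}. Because the normalization $\itriple$ used in the main text differs from the bare Kashiwara index by intersection-dimension terms of the shape $\dim(\cdot\cap\cdot)$ and $\dim(\cdot\cap\cdot\cap\cdot)$, I would feed the cocycle relation and Lemma~\ref{lem:triple_diff_circle_permu} into one another so that these correction terms telescope, exactly in the spirit of the short computation in the proof of that lemma; equivalently, one can read each of the differences $\itriple(W,\Lambda_1,\Lambda_D)-\itriple(W,\Lambda_0,\Lambda_D)$ and $\itriple(W,\Lambda_1,\Lambda_0)-\itriple(\Lambda_1,\Lambda_0,\Lambda_D)$ as a Hörmander index after a cyclic reordering of arguments and conclude by the symmetry and path-additivity of the Hörmander index.

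The only genuinely delicate step, and the one I expect to demand care, is this last bookkeeping: tracking the intersection-dimension corrections and matching the orientation conventions between the two presentations of a Hörmander index, so that the alternating sum actually collapses to zero. Conceptually nothing deep is involved — indeed, if Appendix~\ref{sec:Maslov} already records the cocycle relation in the normalization of $\itriple$ (with the dimension corrections absorbed), the corollary drops out in one line from the two applications of Theorem~\ref{thm:triple-index-Morse-Friedrich-extensions}.
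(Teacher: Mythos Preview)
Your proposal is correct and follows the same strategy as the paper: apply Theorem~\ref{thm:triple-index-Morse-Friedrich-extensions} twice, subtract, and reduce to a symplectic identity among four Lagrangians, which is then unwound via Lemma~\ref{lem:triple_diff_circle_permu} and Proposition~\ref{thm:mainli}. Your ``cocycle'' formulation is in fact slightly cleaner than the paper's explicit chain of cyclic permutations: the two expressions for the H\"ormander index in Proposition~\ref{thm:mainli}, set equal with $(\lambda_1,\lambda_2,\mu_1,\mu_2)=(W,\Lambda_1,\Lambda_0,\Lambda_D)$, give the needed identity $\itriple(W,\Lambda_1,\Lambda_D)-\itriple(W,\Lambda_0,\Lambda_D)=\itriple(W,\Lambda_1,\Lambda_0)-\itriple(\Lambda_1,\Lambda_0,\Lambda_D)$ in one line, so the ``delicate bookkeeping'' you anticipate is already absorbed in that proposition and no further intersection-dimension corrections are required.
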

\begin{proof}
By Theorem~\ref{thm:triple-index-Morse-Friedrich-extensions}, we get that 
\[
\iMor(L_{\Lambda_i})-\iMor(L_{\Lambda_D})= \itriple(p(\ker L^*), \Lambda_i,\Lambda_D),\ i=0,1.
\] 
By this, using Lemma \ref{lem:triple_diff_circle_permu} and Proposition \ref{thm:mainli}, we get that
\begin{align}
\iMor(L_{\Lambda_1})-\iMor(L_{\Lambda_0})&=\itriple(\ker L^*,\Lambda_1,\Lambda_D)-\itriple(p(\ker L^*,\Lambda_0,\Lambda_D)\\
&=[\itriple(\Lambda_D,\ker L^*,\Lambda_1)-\dim(\Lambda_D\cap\Lambda_1)]-[\itriple(\Lambda_D,\ker L^*,\Lambda_0)-
\dim(\Lambda_D\cap \Lambda_0)]\\
&=s(\Lambda_D,\ker L^*,\Lambda_0,\Lambda_1)-\dim(\Lambda_D\cap\Lambda_1)+\dim(\Lambda_D\cap\Lambda_0)
\end{align}
Using proposition \ref{thm:mainli} again, we get
\begin{align*}
s&(\Lambda_D,\ker L^*,\Lambda_0,\Lambda_1)-\dim(\Lambda_D\cap\Lambda_1)+\dim(\Lambda_D\cap\Lambda_0)\\
&=-s(\Lambda_D,\ker L^*,\Lambda_1,\Lambda_0)-\dim(\Lambda_D\cap\Lambda_1)+\dim(\Lambda_D\cap\Lambda_0)\\
&=-\itriple(\Lambda_D,\Lambda_1,\Lambda_0)+
\itriple(\ker L^*,\Lambda_1,\Lambda_0)-\dim(\Lambda_D\cap\Lambda_1)+\dim(\Lambda_D\cap\Lambda_0)\\
&=\itriple(\ker L^*,\Lambda_1,\Lambda_0)-\itriple(\Lambda_1,\Lambda_0,\Lambda_D).
\end{align*}
The last equation comes from Lemma \ref{lem:triple_diff_circle_permu}.
\end{proof}

\begin{rem}
If $\Lambda_1$, $\Lambda_0$ are conormal boundary condition associated with $V_1$, $V_0$, then
\begin{align}
\itriple(\Lambda_1,\Lambda_0,\Lambda_D)&=\dim (\Lambda_1\cap\Lambda_D)-\dim(\Lambda_1\cap \Lambda_0)\cap \Lambda_D\\&=\dim V_1^\perp -\dim(V_1^\perp\cap V_0^\perp)=2n-\dim V_1-(2n-\dim(V_1+V_0))\\
&=\dim (V_1+V_0)-\dim V_1.
\end{align}
\end{rem}
\begin{rem}
It is worth noting that Corollary~\ref{thm:differenza-bc} extends to non-compact graphs the main result established by the authors in \cite[Theorem~1, p.~2798]{ABB23}. See also Remark~\ref{rem:Agrachev-comparison} for the connection between the triple index and the {\em negative Maslov index} introduced by the authors in the aforementioned paper.
\end{rem}


\section{Some applications to star graphs}\label{sec:star-graphs}

This section is dedicated to prove 
\begin{enumerate}
    \item A reduction formula for the union of two start graphs  both having one 1 leaf vertex (i.e. a vertex with just one adjacent edge) and we assume that only one of two is non-compact 
    \item Some formulas measuring  the difference between the Morse and the Maslov index for a star graph or a union of two star graphs, in terms of the number of leaf vertices. 
\end{enumerate}
The first formula provides a proof of the crucial importance  of the Kirchhoff condition in the index theory on graphs and give some insight in studying special orbits (e.g. brake orbits) in Hamiltonian  systems. 

The second formula allows us to  characterize from a variational viewpoint a quantum {\sc star graph}, let's say to establish the number of edges by knowing the Morse and the Maslov index defined on it. Moreover if the properties of the graph are known, it provides a way to compute the Morse index through the Maslov index in a quite effective way.


\subsection{A reduction formula for the union of two star graphs}

We consider a graph $\Gr$ with a single central vertex to which one bounded edge and one unbounded edge are attached. We write $\GStar{1}$ for the bounded edge and $\GStar{1}^\infty$ for the unbounded one. On the graph $\Gr$ the following one-parameter family of Sturm-Liouville operators has been given: 
\begin{equation}\label{eq:Sturm-Liouville-operator-Gr-easy}
		l_s:=-\dfrac{d}{dt}\left(P(t)\dfrac{d}{dt}+ Q(t)\right)+ \trasp{Q}(t)\dfrac{d}{dt}+ R(t) + C_s(t)\qquad \textrm{ for } \quad (t,s)\in \Gr\times I 
\end{equation}
where  the restriction onto each arc  $I_j$ of the graph, i.e.  $l_s|_{I_j}$  is given by 
\begin{equation}\label{eq:Sturm-Liouville-operator-graph-easy}
		l_{j,s}:=-\dfrac{d}{dt}\left(P_j(t)\dfrac{d}{dt}+ Q_j(t)\right)+ \trasp{Q}_j(t)\dfrac{d}{dt}+ R_j(t) + C_{j,s}(t)\qquad \textrm{ for } \quad (t,s)\in I_j \times I \qquad j=1,2
\end{equation}
and where $I_1=[0,1]$ and $I_2=[1,+\infty)$. We assume that condition (H2) throughout the whole section. 

The minimal domain of the operator $L_s$ associated to $l_s$ has fixed ($s$-independent) domain given by 
\[ 
\dom(L)=W_0^{2,2}(I_1, \R^d) \oplus W_0^{2,2}(I_2, \R^d).
\]
We choose the  orientation on $\Gr$ from $0$ to $1$ for the first edge and from $1$ to $+\infty$ for the second edge. Even if all the results are independent on the chosen orientation, we need to chose it once and for all in order to  properly define the sign of the symplectic form. 

Let us consider the symplectic space defined by $(\R^{2d} \oplus\R^{2d} \oplus\R^{2d}, (-\Omega\oplus \Omega) \oplus-\Omega)$. In the previous notation, the parenthesis emphasize the role of the  boundary condition on the symplectic space.  We denote by $L_s^*$ the maximal operator associated to $l_s$ and we consider the projection map $\gamma: \dom(L_s^*) \to \beta(L_s)$ onto the Gelfand-Robbin quotient. We observe that, also in this case the operator $L^*_s$ associated to $l_s$ has fixed ($s$-independent) domain given by
\[ 
\dom(L^*)=W^{2,2}(I_1, \R^d) \oplus W^{2,2}(I_2, \R^d).
\]
By the definition of $\gamma$, we get that 
\[
\gamma(\ker L^*_s)= \Graph(M_s(1))\oplus W_s^{st}(1)
\]
where  \(M_{s}(1)\) denotes the fundamental solution matrix of the Hamiltonian  system associated to the operator $l_{1,s}$ on $[0,1]$ whilst $W_s^{st}(1)$ denotes the stable space of the Hamiltonian  system induced by the operator $l_{2,s}$ on $[1,+\infty)$.

Let $\Lambda$ any Lagrangian subspace of the above defined symplectic space. By invoking Theorem~\ref{thm:Sturm_Sf_formula-SL}, we get that the following spectral flow formula holds: 
\begin{equation}\label{eq:spf-formula-grafo-fava-1}
\spfl\left(L_{s, \Lambda}, s \in [0,1]\right)=-\iCLM\left(\Lambda, \Graph(M_{s}(1)) \oplus W_{s}^{st}(1), s\in [0,1]\right).
\end{equation}
A particularly interesting  case is the one obtained for $\Lambda=L_D \oplus \Delta\in \Lag(\R^{2d}, -\Omega\oplus \Omega)\oplus \Lag(\R^{2d}, -\Omega)$, where $\Delta=\Graph(\Id_d)$  denotes the  Lagrangian corresponding to the continuity condition of the linear dynamical system at the vertex $t=1$. 

We now prove an interesting result which states that posing Kirchhoff condition at the instant $t=1$ and Dirichlet boundary condition at the leaf vertex of the bounded graph $\GStar{1}$, reduced the problem on $\Gr$ to a problem on the half-line $[0,+\infty)$. 
The following reduction formula holds. 
\begin{prop}
    Under the above notation and assumption  (H2), we get that 
    \[
    \spfl\left(L_{s, \Lambda}, s \in [0,1]\right)=-\iCLM\left(\Lambda_D, W_s^{st}(0), s\in [0,1]\right).
    \]
\end{prop}
\begin{proof}
    By Equation~\eqref{eq:spf-formula-grafo-fava-1}, we get that 
    \begin{equation}\label{eq:spf-formula-grafo-fava-2}
\spfl\left(L_{s, \Lambda}, s \in [0,1]\right)=-\iCLM\left(\Lambda_D\oplus\Delta, \Graph(M_{s}(1)) \oplus W_{s}^{st}(1), s\in [0,1]\right).
\end{equation}
By using the symplectic reduction, we get that (RHS) of Equation~\eqref{eq:spf-formula-grafo-fava-2}, is equal to 
\[
-\iCLM\left(\Lambda_{D}, R_{0 \oplus \Delta}\left(\Graph\left(M_{s}(1)\right) \oplus W_{s}^{st}(1)\right), s \in [0,1]\right). 
\]
We observe that 
\begin{multline}
    R_{0 \oplus \Delta}\left(\Graph\left(M_{s}(1)\right) \oplus W_{s}^{st}(1)\right)=\left(\left(\Graph\left(M_{s}(1)\right) \oplus W_{s}^{st}(1)\right) \cap\left(\mathbb{R}^{2 n} \oplus \Delta\right)+(0 \oplus \Delta)\right) /(0 \oplus \Delta)\\=W_{s}^{st}(0) \oplus \Delta/(0 \oplus \Delta)
\end{multline}
where the last equality can be proved as follows. 
Let \(T_{s}=\Graph\left(M_{s}(1)\right)\) and let 
\[
\left(u, T_{s} u, w\right) \in \Graph\left(M_{s}(1)\right) \oplus W_{s}^{st}(1). 
\]
Here \(u \in \mathbb{R}^{2 d}\) and \(w \in W_{s}^{st}(1)\). Now, if \(\left(u, T_{s} u, w\right) \in \mathbb{R}^{2 d} \oplus \Delta\), then we have \(T_{s} u=w\). So, 
\[\left(\left(\Graph\left(M_{s}(1)\right) \oplus W_{s}^{st}(1)\right) \cap\left(\mathbb{R}^{2 n} \oplus \Delta\right)=\left\{\left(T_{s}^{-1} w, w, w\right) \mid w \in W_{s}^{s}(1)\right\}\right.
\]
By this, we finally get that 
\begin{equation}\label{eq:fava-finale-1}
\iCLM\left(\Lambda_{D}, R_{0 \oplus \Delta}\left(\Graph\left(M_{s}(1)\right) \oplus W_{s}^{u}(1)\right), s \in [0,1]\right)= 
\iCLM\left(\Lambda_{D}, W_{s}^{st}(0), s \in [0,1]\right).
\end{equation}
The conclusion follows by Equation~\eqref{eq:fava-finale-1} and  by Equation~\eqref{eq:spf-formula-grafo-fava-2}. This concludes the proof. 
\end{proof}





\subsection{The Morse index theorem for a compact star graph}

A {\sc star graph} is one of the simplest graphs in mathematics. It consists of a single central vertex connected to several leaf vertices, making it a prime example of a tree, a complete bipartite graph, and a bipartite network with extreme centrality. Because of its  straightforward structure, it  is the first step in order to classify the graphs through their variational properties.  

Let us consider the standard symplectic space 
\(\left(\mathbb{R}^{n}\times \R^n, \Omega\right)\) equipped with the system of canonical coordinates $(p,q)$ where $p$ represents the {\sc conjugate  momentum} and $q$ the {\sc configuration variable}. We doubling the symplectic space by flipping the sign of the symplectic form in the first component: 
\(\left(\mathbb{R}^{2 n} \times \mathbb{R}^{2 n},-\Omega \times \Omega\right)\) and let us consider the $k$-dimensional subspace $V$ (here $k=0, \ldots, 2n$)   of the configuration space. We denote by $V^\perp$ its orthogonal. In the doubled symplectic space, let us consider the {\sc Neumann Lagrangian subspace} $L_N$ defined by 
\[
L_{N}=(0) \times \mathbb{R}_{q}^{n}\times(0) \times \mathbb{R}_{q}^{n}.
\]
So, the subspaces $V, V^\perp$ can be identified respectively with the  subspaces  \(I_V, I_{V^\perp}\)  of $L_N$ defined respectively by: 
\[
I_{V}=\Set{\left(0, q_{1}, 0, q_{2}\right) | \left(q_{1}, q_{2}\right) \in V} \quad \textrm{ and } \quad 
I_{V^\perp}=\Set{\left(0, q_{1}, 0, q_{2}\right)|\left(q_{1}, q_{2}\right) \in V^{\perp}}
\]
We let \(\mathcal J=\begin{bmatrix}-J & 0 \\ 0 & J\end{bmatrix}\). The the following result holds. 
\begin{lem}
The subspace $\Lambda_V$ defined by 
\[
\Lambda_{V}=\mathcal J\left(I_{V^{\perp}}\right) \oplus I_{V} .
\]
is Lagrangian.
\end{lem}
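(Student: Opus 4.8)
The plan is to verify the two defining properties of a Lagrangian subspace of the $4n$-dimensional symplectic space $\big(\R^{2n}\times\R^{2n}, -\Omega\times\Omega\big)$: that $\Lambda_V$ is isotropic (the symplectic form vanishes on it) and that $\dim \Lambda_V = 2n$, which is half the ambient dimension. Since $\Lambda_V = \mathcal J(I_{V^\perp})\oplus I_V$ is by construction a direct sum, the dimension count is immediate from $\dim I_V = \dim V =: k$ and $\dim I_{V^\perp}=\dim V^\perp = 2n-k$, provided the sum is genuinely direct; this holds because $I_V$ lies in the configuration part $L_N = (0)\times\R^n_q\times(0)\times\R^n_q$ while $\mathcal J(I_{V^\perp})$ lies in its image under $\mathcal J$, and $\mathcal J$ swaps the configuration and momentum factors (up to sign), so the two summands meet only at $0$. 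Thus $\dim\Lambda_V = k + (2n-k) = 2n$.

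For isotropy, I would split the verification into three bilinear pairings: the form must vanish on $I_V\times I_V$, on $\mathcal J(I_{V^\perp})\times\mathcal J(I_{V^\perp})$, and on the cross term $I_V\times\mathcal J(I_{V^\perp})$. The first is trivial: $L_N$ is a Lagrangian subspace of $(\R^{2n}\times\R^{2n}, -\Omega\times\Omega)$ (it is the ``all momenta zero'' subspace, on which $\Omega = dp\wedge dq$ vanishes in each factor), and $I_V\subset L_N$, so the restriction of $-\Omega\times\Omega$ to $I_V$ is zero. The second pairing reduces to the first: $\mathcal J$ is a complex structure compatible with $-\Omega\times\Omega$ in the sense that $\mathcal J$ preserves this symplectic form (this uses $-J$ on the first factor and $J$ on the second, matched against $-\Omega$ and $\Omega$ respectively, so each factor contributes $J^\ast\Omega=\Omega$), hence $\mathcal J$ maps isotropic subspaces to isotropic subspaces, and $I_{V^\perp}\subset L_N$ is isotropic.

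The heart of the matter — and the step I expect to be the main obstacle — is the cross term: for $\xi\in I_V$ and $\eta\in I_{V^\perp}$ one must show $(-\Omega\times\Omega)(\xi,\mathcal J\eta)=0$. Writing $\xi=(0,q_1,0,q_2)$ with $(q_1,q_2)\in V$ and $\eta=(0,r_1,0,r_2)$ with $(r_1,r_2)\in V^\perp$, one computes $\mathcal J\eta = (-J(0,r_1), J(0,r_2)) = \big((r_1,0),(-r_2,0)\big)$ (using $J(0,r)=(r,0)$ or its negative according to the sign convention $J=\begin{pmatrix}0&-\Id_d\\ \Id_d & 0\end{pmatrix}$; the precise signs must be tracked carefully here). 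Then $(-\Omega\times\Omega)(\xi,\mathcal J\eta)$ collapses, in each of the two symplectic factors, to the standard Euclidean pairing $\pm\langle q_i, r_i\rangle$, and the total is $\pm\big(\langle q_1,r_1\rangle + \langle q_2,r_2\rangle\big) = \pm\big\langle (q_1,q_2),(r_1,r_2)\big\rangle_{\R^{2n}}$, which vanishes precisely because $(q_1,q_2)\in V$ and $(r_1,r_2)\in V^\perp$ are orthogonal. So the design of $\Lambda_V$ exactly engineers the orthogonality of $V$ and $V^\perp$ into the vanishing of the symplectic pairing; the only real care needed is in getting the sign bookkeeping across the two factors and the definition of $\mathcal J$ consistent, after which isotropy together with the dimension count gives that $\Lambda_V$ is Lagrangian.
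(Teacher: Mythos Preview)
Your proof is correct and follows essentially the same strategy as the paper: verify isotropy by splitting into the three pairings and reduce the cross term to the Euclidean orthogonality of $V$ and $V^\perp$. The only minor differences are that the paper omits the explicit dimension count, argues isotropy of $\mathcal J(I_{V^\perp})$ via the inclusion $\mathcal J(I_{V^\perp})\subset L_D$ rather than via $\mathcal J$-invariance of the form, and handles the cross term abstractly through $(-\Omega\times\Omega)(u,v)=\langle\mathcal J u,v\rangle$ instead of writing out coordinates; these are cosmetic variations of the same argument.
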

\begin{proof} We observe that \(\mathcal J\left(I_{V^{\perp}}\right)<L_{D}=\mathbb{R}^{n} \times(0) \times \mathbb{R}^{n} \times(0)\) where $L_D$ denotes the {\sc Dirichlet Lagrangian}. Now,  since \(L_{D}\) is a Lagrangian subspace, it follows that  \(\mathcal J\left(I_{V^{+}}\right)\) is isotropic. Let
\begin{multline}
u_{1}+v_{1} \in \Lambda_{V} \qquad \textrm{ and } \qquad  u_{2}+v_{2} \in \Lambda_{V}\\ \textrm{ for  } \qquad 
u_{1}, u_{2} \in \mathcal J(I_{V^\perp})\qquad  \textrm{ and } \qquad   v_{1}, v_{2} \in I_{V}.
\end{multline}
Then, we have:
\begin{multline}
 (-\Omega\times\Omega)\left(u_{1}+v_{1}, u_{2}+v_{2}\right)=(-\Omega\times \Omega)\left(u_{1}+v_{1}, u_{2}+v_{2}\right)= \\[3pt]
 =\Omega\left(u_{1}, u_{2}\right)+\Omega\left(u_{1}, v_{2}\right)+\Omega\left(v_{1}, u_{2}\right)+\Omega\left(v_{1}, v_{2}\right)\\[3pt] 
 = \Omega\left(u_{1}, v_{2}\right)+\Omega\left(v_{1}, u_{2}\right)=0
\end{multline}
where the conclusion follows by observing that 
$J u_{1}, J u_{2} \in I_{V^\perp}$, $v_1, v_2 \in I_V$ and finally by observing that $I_V \perp I_{V^\perp}$.
\end{proof}
\begin{rem}
Another way to prove the lemma is the following. Let 
$\trasp{(u_1, v_1)}$ and $\trasp{(u_2, v_2)}$ two vectors in $\Lambda_V$. By a straighforward calculation, we get:
\begin{multline}
 (-\omega \times \omega)\left[\binom{u_{1}}{v_{1}},\binom{u_{2}}{v_{2}}\right]=\left\langle\left(\begin{array}{cc}
- J & 0 \\
0 & J
\end{array}\right)\binom{u_{1}}{v_{1}},\binom{u_{1}}{v_{2}}\right\rangle= \\
 =\left\langle\left(\begin{array}{c}
-J\, u_{1} \\
J v_{1}
\end{array}\right),\binom{u_{2}}{v_{2}}\right\rangle=-\left\langle J u_{1}, u_{2}\right\rangle+\left\langle J v_{1}, v_{2}\right\rangle=0
\end{multline}
since both $I_V$ and $\mathcal J(I_{V^\perp})$ are isotropic. 
\end{rem}

Let $\GStar{m+1}$ be the quantum directed graph having one central vertex and  $m$ leaf vertices, or $\mathcal K_{1,m}$  emphasizing its structure as a complete bipartite graph where one subset has one vertex and the other subset has $m$ vertices, and we assume that  $\GStar{m+1}$ is compact. We assume that the orientation has been chosen in such a way that the central point is the starting point of each arc and we consider the variational problem having conormal boundary conditions.  

\def\m{3} 
\begin{figure}[ht]
  \centering
  \begin{tikzpicture}[>=stealth, every node/.style={draw,circle,inner sep=1.0pt}, node distance=1.0cm]
    \node (c) at (0,0) {$v_0$};

    \foreach \i in {1,...,\m} {
      \node (l\i) at ({360*(\i-1)/\m}:3cm) {$v_{\i}$};
      \draw[->] (c) -- (l\i);
    }
  \end{tikzpicture}
  \caption{Directed star graph with \(m=3\) leaves.}
  \label{fig:directed-star}
\end{figure}
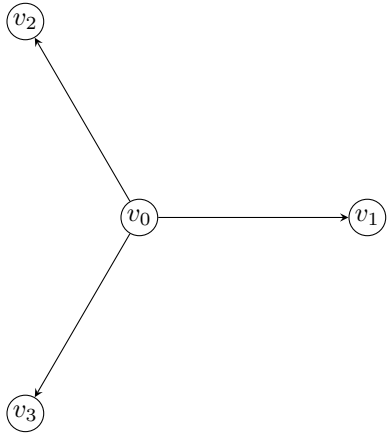
We start to consider the linear subspaces
\[
W_{j}<\mathbb{R}^{d} \qquad \textrm{ and }\qquad  V<\underbrace{\mathbb{R}^{d} \times \ldots \times \mathbb{R}^{d}}_{m \textrm {-times }}=\left(\mathbb{R}^{d}\right)^{m}.
\]
Here $W_j$ is a $k_j$-dimensional subspace for $k=0, \ldots, d$ and  $V$ is an $h$-dimensional for $h=0, \ldots md$. For eavery $j=1, \ldots, m$, we let 
\begin{equation}
N^{*}\left(W_{j}\right)= W_{j}^{\perp} \times W=\Set{(p, q) \in \mathbb{R}^{d} \times \mathbb{R}^{d}| q \in W_{j} \textrm{ and }   \langle p,q\rangle =0 \quad \forall q \in W_{j}} \in \Lag(\R^{2d}, \omega_j)
\end{equation}
and we defined 
\[
\Lambda_W= \bigoplus_{j=1}^m N^*(W_j) \in  \bigoplus_{j=1}^m \Lag(\R^{2d}, \omega_j).
\]
Denoting by \(\left(\mathbb{R}^{m d} \times \mathbb{R}^{m d}, \Omega\right)\) the standard $2md$-dimensional symplectic space, let us now define the following new symplectic space 
\[
\big(\R^{2md}\times \R^{2md}, \widetilde \Omega\big) \qquad \textrm{ where } \qquad \widetilde{\Omega}\=\bigoplus_{j=1}^{m} \omega_{j} \oplus -\Omega. 
\]
We let
\[
\widetilde{J}=\bigoplus_{j=1}^{m} J_{j} \oplus-J
\]
where $J_{i} \in\Sp(2 d)$, $J \in \Sp(2md)$ and finally $\widetilde J\in \Sp(4md)$. Let us now define the  Lagrangian subspace:
\[
\Lambda_V=\widetilde J(I_{V^\perp}) \oplus I_V \in \Lag(2md),
\]
where $I_{V^\perp}$ and $I_V$ are the isotropic subspace constructed as before. 

Now let \(\Lambda_{0}=\Lambda_W+\Lambda_V\) and  we let \(\Lambda_{D} \in \Lag(2md)\) be the Dirichlet boundary condition.  For $j=1, \ldots, m$, we denote  by \(\gamma_{j}\)    the fundamental matrix solutions of the  Hamiltonian  system along the $j$-th arc corresponding to the Sturm-Liouville operator $l_{j}$ given at Equation~\eqref{eq:Sturm-Liouville-operator} and we set $\Lambda_\gamma=\bigoplus_{j=1}^m \Graph(\gamma_j)$. Then we have 
\begin{multline}\label{eq:Morse-Maslov -star-graphs}
\iMor\left(\Lambda_{0}\right)-\iCLM\left(\Lambda_{0}\right)=\left[\iMor\left(\Lambda_{0}\right)-\iMor\left(\Lambda_{D}\right)\right]
+\left[\iMor\left(\Lambda_{D}
\right)-\iCLM\left(\Lambda_{D}\right)\right]\\[3pt]+\left[\iCLM\left(\Lambda_{D}\right)-\iCLM\left(\Lambda_{0}\right)\right]
=\itriple\left(\Lambda_\gamma, \Lambda_{0}, \Lambda_{D}\right)-m\, d+s\left(\Graph(\Id), \Lambda_\gamma; \Lambda_{0}, \Lambda_{D}\right) \\[3pt]
=-md+\itriple\left(\Graph(\Id), \Lambda_{0}, \Lambda_{D}\right)
\end{multline}
where we denoted by $\itriple$ and $s$ respectively the triple and the H\"ormander index. (Cfr. Appendix~\ref{sec:spectral-flow-Maslov-index} and references therein).

Formula provided at Equation~\eqref{eq:Morse-Maslov -star-graphs} it is very useful in the applications and it allows us  to characterize the number of arcs  of the {\sc star graphs} just by looking at the Morse and the Maslov indices.


\subsubsection*{Dirichlet on the leaf vertices and Kirchhoff on the central vertex}

Now we assume that on the leaf vertices we have Dirichlet boundary conditions. So, by using the previous notation, we get  $W_j=(0)$ for every $j=1, \ldots, m$ and so the corresponding Lagrangian subspace $\Lambda_W$ is given  by 
\[
\Lambda_W=\bigoplus_{j=1}^m L_D^j \qquad \textrm{ where } \qquad L_D^j=\R^d \times (0) \in \Lag(\R^{2d}, \omega_j). 
\]
At the central vertex we assume  {\sc Kirchhoff boundary condition}. More precisely, we let 
\[
V=\Set{(q_1, \ldots, q_m)\in (\R^d)^m|q_1=\ldots=q_m}.
\]
and let $V^\perp$ its orthogonal defined by 
\[
V^\perp=\Set{(q_1, \ldots, q_m) \in (\R^d)^m|q_1+\ldots +q_m=0}.
\]
To $V$ and $V^\perp$ we associate as before the subspaces $I_V$ and $I_V^\perp$  and we consider the Lagrangian subspace $\Lambda_V$ defined as above, by
\[
\Lambda_V=\widetilde J(I_{V^\perp}) \oplus I_V \in \Lag(2md).
\]
The next step is to explicit compute the triple index appearing at Equation~\eqref{eq:Morse-Maslov -star-graphs}. To do so, we let 
\[
\widetilde V= V \oplus \underbrace{0\oplus \ldots \oplus 0}_{m \textrm {-times }} 
\]
and we observe that 
\[
\widetilde V^\perp= V^\perp \oplus \underbrace{\R^d\oplus \ldots \oplus \R^d}_{m \textrm {-times }} 
\]
By taking into account the formula given at \cite[Example 2.8]{HWY20}, we get that 
\begin{equation}\label{eq:triple-and-V}
\itriple\left(\Graph(\Id), \Lambda_{0}, \Lambda_{D}\right)= md-\dim\left[\widetilde  V^{\perp} \bigcap \bigoplus_{j=1}^m\Graph\left(-\Id_d\right)\right]\cong \widetilde  V^{\perp}.
\end{equation}
Since $\dim \widetilde  V^{\perp}= m(d-1)$, then we get that 
\begin{equation}\label{eq:contribution}
\itriple\left(\Graph(\Id), \Lambda_{0}, \Lambda_{D}\right)= m\,d- m\, (d-1)=m.
\end{equation}
We finally get that 
\[
\iMor\left(\Lambda_{0}\right)-\iCLM\left(\Lambda_{0}\right)=
-md+\itriple\left(\Graph(\Id), \Lambda_{0}, \Lambda_{D}\right)=-(m-1)\,d.
\]
From a variational viewpoint, we have the following data. 
\begin{itemize}
\item The directed compact graph  $\GStar{m+1}$ having one central starting vertex and  $m$ endpoints leaf vertices
\item A regular Lagrangian function $\mathscr L: \GStar{m+1} \times T\R^d \longrightarrow \R$ be such that 
\begin{description}
    \item[(i)] $\mathscr L\in \mathscr C^0(\GStar{m+1} \times T\R^d, \R)$ 
    \item[(ii)] $\mathscr L_j\=\mathscr L|_{I_j} \in \mathscr C^2(I_j\times T\R^d, \R)$ for every $j=1, \ldots, m$.
\end{description}
\item We assume that condition (L1) holds\footnote{So the Lagrangian is Legendre convex and in particular the Morse index is finite.}
\item We consider the Bolza problem (meaning that fixed endpoints at the $m$ leaf vertices)  and Kirchhoff on the central vertex.
\end{itemize}
We denote by $\iMor(x, \GStar{m+1})$ (resp. $\iCLM(x, \GStar{m+1})$) the {\sc Morse index} (resp. the Maslov index) of the critical point of the Lagrangian action functional on $W^{1,2}_W(\GStar{m+1})$ where $W$ is the subspace of the configuration space corresponding to the boundary conditions. Then the following result holds.
\begin{thm}\label{thm:main-star}
    Let us consider the star graph $\GStar{m+1}$ and a Lagrangian function  $\mathscr L: \GStar{m+1} \times T\R^d \longrightarrow \R$ satisfying the assumption above and we fix an orientation on the graph. Then the following formula holds  
\[
\iMor\left(x, \GStar{m+1}\right)-\iCLM(x, \GStar{m+1})=-(m-1)\, d.
\]
\end{thm}
\begin{proof}
We start by orienting  all leaves outward and we observe that the boundary conditions on the variations corresponding to the Bolza problem at the leaf vertices are Dirichlet whilst at the center vertex we have a Kirchhoff boundary condition. Under the regularity assumptions on the Lagrangian, the action functional is of class $\mathscr C^2$. Let $x$ be a critical point and let us consider the index form at $x$. By the Legendre convexity condition, we get that the index form is a quadratic form bounded below and having finite Morse index.  By Equation~\eqref{eq:Morse-Maslov -star-graphs}, we get that 
\[
\iMor\left(x, \GStar{m+1}\right)-\iCLM(x, \GStar{m+1})=-md +\itriple\left(\Graph(\Id), \Lambda_{0}, \Lambda_{D}\right).
\]
The conclusion follows by Equation~\eqref{eq:triple-and-V} and Equation~\eqref{eq:contribution}. This concludes the proof. 
\end{proof}


\subsection{The Morse index theorem for a two-star graph}

Now we assume that we have a graph $\Gr$ which which is the union of two star-graphs $\Gr_A$ and $\Gr_B$ as sketched at Figure~\ref{fig:2}.
\begin{figure}
\begin{center}
\begin{tikzpicture}[
  >=stealth,
  every node/.style={
    draw,
    circle,
    minimum size=6mm,
    inner sep=0pt
  }
]
  \node (A) at (0,0) {A};
  \node (B) at (6,0) {B};

  \foreach \i/\ang in {1/90,2/210,3/330}{
    \node (v\i) at (\ang:2cm) {$v_{\i}$};
    \draw[->, bend left=20] (A) to (v\i);
  }

  \foreach \j/\ang in {1/45,2/135,3/225,4/315}{
    \node (w\j) at ($(B)+(\ang:2cm)$) {$w_{\j}$};
    \draw[->, bend right=20] (w\j) to (B);
  }

  \draw[bend left=20] (A) to (B);
\end{tikzpicture}
  \caption{A two-star graph as which is the union of two star graphs having 3 and 4 leaf vertices, respectively}\label{fig:2}
  \end{center}
\end{figure}
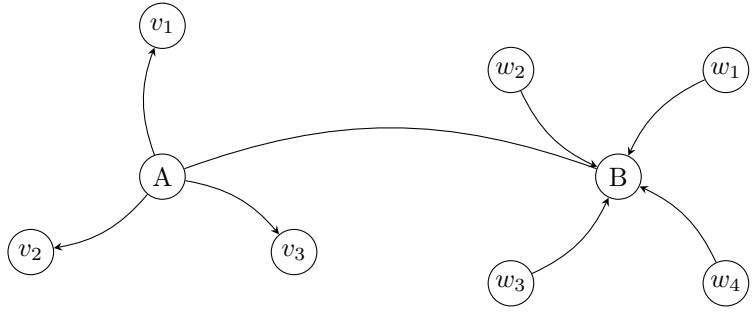
We assume that $\Gr_A=\GStar{m_A+1}$ (resp. $\Gr_B=\GStar{m_B+1}$) has $m_A$ (resp. $m_B$) leaf vertices and we assume Dirichlet boundary conditions at those $(m_A+m_B)$-vertices. So, by using the previous notation, we get  that $W_j=(0)$ for every $j=1, \ldots, (m_A+m_B)$ and so the corresponding Lagrangian subspace $\Lambda_W$ is given  by 
\[
\Lambda_W=\bigoplus_{j=1}^{m_A+m_B} L_D^j \qquad \textrm{ where } \qquad L_D^j=\R^d \times (0) \in \Lag(\R^{2d}, \omega_j). 
\]
At both of the central vertices we assume  {\sc Kirchhoff boundary conditions}. More precisely, we let 
\begin{multline}
V_A=\Set{(q_1^A, \ldots, q_{m_A+1}^A)\in (\R^d)^{m_A+1}|q_1^A=\ldots=q_{m_A+1}^A} \qquad \textrm{ and }\\[6pt]
\qquad 
V_B=\Set{(q_1^B, \ldots, q_{m_B+1}^B)\in (\R^d)^{m_B+1}|q_1^B=\ldots=q_{m_B+1}^B}
\end{multline}
and let $V_A^\perp$  and  $V_B^\perp$ respectively their  orthogonal defined by 
\begin{multline}
V_A^\perp=\Set{(q_1^A, \ldots, q_{m_A+1}^A)\in (\R^d)^{m_A+1}|q_1^A+\ldots+q_{m_A+1}^A}=0 \qquad \textrm{ and }\\[6pt]
\qquad 
V_B^\perp=\Set{(q_1^B, \ldots, q_{m_B+1}^B)\in (\R^d)^{m_B+1}|q_1^B+\ldots+q_{m_B+1}^B=0}.
\end{multline}
We define the subspace  $V\=V_A \oplus V_B$  and we associate to $V$ and its orthogonal $V^\perp$, the subspaces $I_V$ and $I_V^\perp$.  We consider the Lagrangian subspace $\Lambda_V$ defined as above, by
\[
\Lambda_V=\widetilde J(I_{V^\perp}) \oplus I_V \in \Lag(2\,d(m_A+m_B+1)).
\]
We now explicit compute the triple index appearing at Equation~\eqref{eq:Morse-Maslov -star-graphs}. To do so, we let 
\[
\widetilde V= V \oplus \underbrace{0\oplus \ldots \oplus 0}_{(m_A+m_B) \textrm {-times }} 
\]
and we observe that 
\[
\widetilde V^\perp= V^\perp \oplus \underbrace{\R^d\oplus \ldots \oplus \R^d}_{(m_A+m_B) \textrm {-times }} 
\]
By taking into account the formula given at \cite[Example 2.8]{HWY20}, we get that 
\begin{multline}\label{eq:triple-and-V-TWO-STARTS}
\itriple\left(\Graph(\Id), \Lambda_{0}, \Lambda_{D}\right)= (m_A+m_B+1)\,d-\dim\left[\widetilde  V^{\perp} \bigcap \bigoplus_{j=1}^{m_A+m_B+1}\Graph(-\Id_d)\right]\\[6pt]
=\widetilde{V}^{\perp} \cap \bigoplus_{j=1}^{m_A+m_B+1} \Graph(-\Id_d) \\[6pt] 
\cong \Set{\left(q^A_1, \cdots, q^A_{m_A+1}, q_{1}^{B}, \cdots, q_{m_B+1}^{B}\right)| \sum_{j=1}^{m_A} q_{j}^{A}=0, \quad  \sum_{i=1}^{m_B} q_{i}^{B}=0,\quad  q_{m_A+1}^{A}=-q_{m_B+1}^{B}}
\end{multline}
So
\[
\dim\left[\widetilde  V^{\perp} \bigcap \bigoplus_{j=1}^{m_A+m_B+1}\Graph(-\Id_d)\right]=(m_A+ m_B-1)\, d
\]
and hence 
\begin{equation}\label{eq:indice-triplo-2-star}
\itriple\left(\Graph(\Id), \Lambda_{0}, \Lambda_{D}\right)=
2\, d
\end{equation}
In conclusion, we get that 
\begin{equation}\label{eq:Morse-Maslov-2-star}
\iMor\left(\Gr_A\cup\Gr_B\right)-\iCLM\left(\Gr_A\cup\Gr_B\right)=
-(m_A+ m_B-1)\,d.
\end{equation}
\begin{thm}\label{thm:main--two-star}
    Let us consider an oriented two-star graph $\Gr= \Gr_A \cup \Gr_B$ and a Lagrangian function  on it $\mathscr L: \Gr \times T\R^d \longrightarrow \R$ satisfying the  above regularity  assumptions and assumption (L1). Then the following formula holds  
\[
\iMor\left(\Gr_A\cup\Gr_B\right)-\iCLM\left(\Gr_A\cup\Gr_B\right)=
-(m_A+ m_B-1)\,d.
\]
\end{thm}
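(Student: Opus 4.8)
The plan is to follow the template of the proof of Theorem~\ref{thm:main-star}, since the two-star graph differs from a single star only in its combinatorial data. First I would fix an orientation on $\Gr = \Gr_A \cup \Gr_B$: orient the $m_A$ leaf edges of $\Gr_A$ and the $m_B$ leaf edges of $\Gr_B$ outward from their respective central vertices, and fix an orientation (say from $A$ to $B$) on the edge joining the two centres. Under the regularity hypotheses on $\mathscr L$ together with the Legendre convexity condition (L1), the action functional $A_\Gr$ is of class $\mathscr C^2$ on the constraint manifold $W^{1,2}_W(\Gr)$, where $W$ encodes the prescribed boundary data: Dirichlet at the $m_A+m_B$ leaf vertices and Kirchhoff at both central vertices $A$ and $B$. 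At a critical point $x$ the index form $\mathcal I$ is then a quadratic form that is bounded below and of finite Morse index, whose associated Sturm--Liouville operator on $\Gr$ carries the self-adjoint boundary condition $\Lambda_0 = \Lambda_W + \Lambda_V$ with $V = V_A \oplus V_B$, exactly as constructed in the preceding paragraphs.

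Next I would invoke the master identity. The graph $\Gr$ has $m_A+m_B+1$ edges: the $m_A+m_B$ leaf edges together with the single connecting edge, which appears in the local description of both stars with opposite orientations. Applying the chain of equalities recorded in Equation~\eqref{eq:Morse-Maslov -star-graphs}, which is legitimate because $\Gr$ has finitely many edges and vertices, (L1) forces finiteness of the Morse index, and Theorem~\ref{thm:triple-index-Morse-Friedrich-extensions} together with Corollary~\ref{thm:differenza-bc} apply edge by edge, I obtain
\[
\iMor(x,\Gr) - \iCLM(x,\Gr) = -(m_A+m_B+1)\,d + \itriple\big(\Graph(\Id),\Lambda_0,\Lambda_D\big),
\]
which reduces the statement to the computation of a single triple index in a finite-dimensional symplectic space.

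I would then carry out that computation precisely as in Equation~\eqref{eq:triple-and-V-TWO-STARTS}: set $\widetilde V = V \oplus 0^{\oplus(m_A+m_B)}$, so that $\widetilde V^{\perp} = V^{\perp} \oplus (\R^d)^{\oplus(m_A+m_B)}$; invoke \cite[Example 2.8]{HWY20} to identify $\itriple(\Graph(\Id),\Lambda_0,\Lambda_D)$ with $(m_A+m_B+1)d - \dim\big(\widetilde V^{\perp} \cap \bigoplus \Graph(-\Id_d)\big)$; and observe that the latter intersection consists of those tuples $(q^A_1,\dots,q^A_{m_A+1},q^B_1,\dots,q^B_{m_B+1})$ with $\sum_{j=1}^{m_A} q^A_j = 0$, $\sum_{i=1}^{m_B} q^B_i = 0$ and $q^A_{m_A+1} = -q^B_{m_B+1}$, hence of dimension $(m_A+m_B-1)d$. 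Substituting gives $\itriple(\Graph(\Id),\Lambda_0,\Lambda_D) = 2d$ as in Equation~\eqref{eq:indice-triplo-2-star}, and the master identity then yields $-(m_A+m_B+1)d + 2d = -(m_A+m_B-1)d$, which is the asserted formula.

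The step I expect to be the genuine obstacle is not any of the linear-algebra computations above but the verification that the master identity Equation~\eqref{eq:Morse-Maslov -star-graphs}, derived in the single-star subsection, transfers verbatim to the two-star configuration. Concretely, one must check that the direct-sum decomposition $L_\Gr = \bigoplus_j L_{I_j}$ over the $m_A+m_B+1$ edges is compatible with imposing Kirchhoff conditions at the two central vertices simultaneously, so that the Gelfand--Robbin / Friedrichs-extension machinery produces exactly the boundary subspace $\Lambda_0 = \Lambda_W + \Lambda_V$ and the H\"ormander-index term cancels as it does for a single star. This hinges on correctly bookkeeping the connecting edge as the common ``$(m_A+1)$-th edge of $\Gr_A$'' and ``$(m_B+1)$-th edge of $\Gr_B$'', with the sign reversal $q^A_{m_A+1} = -q^B_{m_B+1}$ produced by the opposite orientations seen from the two centres. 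Once this identification is secured, the proof is the substitution displayed above.
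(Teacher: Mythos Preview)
Your proposal is correct and follows essentially the same route as the paper: orient the leaves outward, use (L1) to ensure the index form is bounded below with finite Morse index, apply the master identity Equation~\eqref{eq:Morse-Maslov -star-graphs} with $m_A+m_B+1$ edges to reduce to the triple index, and then read off $\itriple(\Graph(\Id),\Lambda_0,\Lambda_D)=2d$ from Equation~\eqref{eq:indice-triplo-2-star}. The paper's proof is in fact shorter than yours, since all the triple-index computations you spell out are already recorded in the display equations preceding the theorem statement; your concern about whether Equation~\eqref{eq:Morse-Maslov -star-graphs} transfers to the two-star setting is legitimate, but the paper treats it as implicit in the general setup of Section~\ref{sec:star-graphs} and does not address it separately.
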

\begin{proof}
We start by orienting  all leaves outward and we observe that the boundary conditions on the variations corresponding to the Bolza problem at the leaf vertices are Dirichlet whilst at the center vertices we have  Kirchhoff boundary conditions. Under the regularity assumptions on the Lagrangian, the action functional is of class $\mathscr C^2$. Let $x$ be a critical point and let us consider the index form at $x$. By the Legendre convexity condition, we get that the index form is a quadratic form bounded below and having finite Morse index.  By Equation~\eqref{eq:Morse-Maslov -star-graphs}, we get that 
\[
\iMor\left(\Gr_A\cup\Gr_B\right)-\iCLM\left(\Gr_A\cup\Gr_B\right)=-(m_A+m_B+1)\,d +\itriple\left(\Graph(\Id), \Lambda_{0}, \Lambda_{D}\right).
\]
The conclusion follows by Equation~\eqref{eq:indice-triplo-2-star}. This concludes the proof. 
\end{proof}

We close this section with the following interesting example. 
\begin{ex}
   We assume that $\Gr= \Gr_A\cup \Gr_B$ where $\Gr_A=\GStar{1}$ and $\Gr_B=\GStar{1}$, too. So in this case Equation~\eqref{eq:Morse-Maslov-2-star} reduces to the following
   \begin{equation}\label{eq:Morse-Maslov-2-star-special}
\iMor\left(\Gr_A\cup \Gr_B\right)-\iCLM(\Gr_A\cup \Gr_B)=-d.
\end{equation}
It is worth noticing that the graph $\Gr_A\cup \Gr_B$ con be identified with a segment joining the two leaf vertices and with Dirichlet boundary condition. In this case the formula provided at Equation~\eqref{eq:Morse-Maslov-2-star-special} coincides with that proved by authors at \cite[Theorem 3.4, Equation (3.16) amd Equation (3.32)]{HS09}.
\end{ex}



\appendix

\section{Spectral flow and Maslov index}\label{sec:spectral-flow-Maslov-index}

The purpose of this Section is to provide the functional analytic and symplectic  preliminaries behind  {\bf  spectral flow\/} and the {\bf   Maslov index\/}.   Our basic references are \cite{CLM94,Dui76,RS93,LZ00}
from which  we borrow some notation and definitions.

%

\subsection{Maslov index for pairs of Lagrangian paths}\label{sec:Maslov}

We denote by $(\R^{2n}, \Omega)$ be the standard symplectic space and  
let $\mathscr P(J; \R^{2n})$ the space of continuous maps 
\[
f: J \to \Set{\textrm{pairs of Lagrangian subspaces in } \R^{2n}}
\]
equipped with the compact-open topology and we recall the following definition. 
\begin{defn}\label{def:Maslov-index}
The {\em CLM-index\/} is the unique integer valued function  
\[
 \iCLM: \mathscr P(J; \R^{2n}) \to \Z
\]
which satisfies  the following properties:
\begin{multicols}{2}
\begin{itemize}
\item[(I)] {\sc  Affine Scale Invariance}
\item[(II)] {\sc Deformation Invariance relative to the Endpoints}
\item[(III)] {\sc Path Additivity}
\item[(IV)] {\sc Symplectic Additivity}
\item[(V)] {\sc Symplectic Invariance}
\item[(VI)]{\sc Normalization}.
\end{itemize}
\end{multicols}
(We refer the interested reader to \cite[Theorem 1.1]{CLM94} for the proof). 

\end{defn}

For further reference we refer the interested reader to \cite{CLM94} and references therein.  Following authors in \cite[Section 3]{LZ00}, and references therein, let us now introduce the notion of crossing form that gives an efficient  way for computing the intersection indices   in the Lagrangian Grassmannian context.  

Let $\ell$ be a $\mathscr C^1$-curve of Lagrangian subspaces 
such that 
$\ell(0)= L$ and $\dot \ell(0)=\widehat L$. Now, if  $W$ is a fixed Lagrangian subspace transversal to $L$. For  $v \in L$ and  small enough $t$, let $w(t) \in W$ be such that $v+w(t) \in \ell(t)$.  Then the  form 
\begin{equation}\label{eq:forma-Q}
 Q(L, \widehat L)[v]= \dfrac{d}{dt}\Big\vert_{t=0} \omega \big(v, w(t)\big)
\end{equation}
is independent on the choice of $W$. 
\begin{defn}\label{def:crossing-form}
Let $t \mapsto \ell(t)=(\ell_1(t), \ell_2(t))$ be a map in  	 $\mathscr P(J; \R^{2n})$. For $t \in J$, the crossing form is a quadratic form defined by 
\begin{equation}\label{eq:crossings}
\Gamma(\ell_1, \ell_2, t)= Q(\ell_1(t), \dot \ell_1(t))- 	Q(\ell_2(t), \dot \ell_2(t))\Big\vert_{\ell_1(t)\cap \ell_2(t)}
\end{equation}
 A {\em crossing instant\/} for the curve $t \mapsto \ell(t)$ is an instant $t \in J$  such that $\ell_1(t)\cap \ell_2(t)\neq \{0\}$ nontrivially. A crossing is termed {\em regular\/} if the $\Gamma(\ell_1, \ell_2, t)$ is nondegenerate. 
 \end{defn}
 We observe that  if $t$ is a crossing instant, then $
 \Gamma(\ell_1, \ell_2,t)= - \Gamma (\ell_2, \ell_1, t).$
 If  $\ell$ is {\em regular\/} meaning that  it has only regular crossings, then the $\iCLM$-index can be computed through the crossing forms, as follows 
\begin{equation}\label{eq:iclm-crossings}
 \iCLM\big(\ell_1(t), \ell_2(t), \Omega,  t \in J\big) = \coiMor\big(\Gamma(\ell_2, \ell_1, a)\big)+ 
\sum_{a<t<b} 
 \sgn\big(\Gamma(\ell_2, \ell_1, t)\big)- \iMor\big(\Gamma(\ell_2, \ell_1,b)\big)
\end{equation}
where the summation runs over all crossings $t \in (a,b)$ and $\coiMor, \iMor$  are the dimensions  of  the positive and negative spectral spaces, respectively and $\sgn\= 
\coiMor-\iMor$ is the  signature. 
(We refer the interested reader to \cite{LZ00} ). Let $L_0$ be a distinguished Lagrangian and we assume that $\ell_1(t)\equiv L_0$ for every $t \in J$. In this case we get that the crossing form at the instant $t$ provided in Equation~\eqref{eq:crossings} actually reduce to 
\begin{equation}\label{eq:forma-crossing}
 \Gamma\big(\ell_2(t), L_0, t \big)= Q|_{\ell_2(t)\cap L_0}
\end{equation}
and hence   
\begin{equation}\label{eq:iclm-crossings-2}
 \iCLM\big(L_0, \ell_2(t), \Omega,  t \in J\big) = \coiMor\big(\Gamma(\ell_2, L_0, a)\big)+ 
\sum_{a<t<b} 
 \sgn\big(\Gamma(\ell_2, L_0, t)\big)- \iMor\big(\Gamma(\ell_2, L_0,b)\big)
\end{equation}


\subsection{On the triple and H\"ormander index}

A crucial ingredient which somehow measure the difference of the relative Maslov index with respect to two different Lagrangian subspaces is given by the H\"ormader index. Such an index is also related to the difference of the triple index and to its interesting  generalization provided recently by the last author and his co-authors in \cite{ZWZ18}. 
For, we start with  the following definition of the H\"ormander index. 
\begin{defn}\label{def:hormander}(\cite[Definition 3.9]{ZWZ18})
Let $\lambda, \mu \in \mathscr C^0\big(J, \Lagr(V,\omega)\big)$ such that 
\[
\lambda(a)=\lambda_1, \quad \lambda(b)=\lambda_2 \quad  \textrm{ and } \quad \mu(a)=\mu_1, \quad \mu(b)= \mu_2.
\]
Then the {\em H\"ormander index\/} is the integer given by 
\begin{multline}
s(\lambda_1, \lambda_2; \mu_1, \mu_2)
\= 
\iCLM\big(\mu_2, \lambda(t); t \in J\big) - 
\iCLM\big(\mu_1, \lambda(t); t \in J\big) \\
=
\iCLM\big(\mu(t), \lambda_2; t \in J\big)- \iCLM\big(\mu(t), \lambda_1; t \in J\big).
\end{multline}
Compare \cite[Equation (17), pag. 736]{ZWZ18} once observing that we observe that $\iCLM(\lambda,\mu)$ corresponds to $\textrm{Mas}\{\mu,\lambda\}$ in the notation of \cite{ZWZ18}. 
\end{defn}
\paragraph{Properties of the H\"ormander index.}
We briefly recall some well-useful properties of the H\"ormander index.
\begin{itemize}
\item 	$s(\lambda_1,\lambda_2; \mu_1, \mu_2) = -s(\lambda_1,\lambda_2; \mu_2, \mu_1)$
\item $s(\lambda_1,\lambda_2; \mu_1, \mu_2) = 
-s(\mu_1, \mu_2;\lambda_1,\lambda_2) + 
\sum_{j,k \in \{1,2\}}(-1)^{j+k+1}\dim (\lambda_j \cap \mu_k)$.
\item If $\lambda_j\cap \mu_k =\{0\}$ then $s(\lambda_1,\lambda_2; \mu_1, \mu_2) = 
-s(\mu_1, \mu_2;\lambda_1,\lambda_2)$.
\end{itemize}

The H\"ormander index is computable as the difference of the two Maslov  indices each one involving  three different Lagrangian subspaces. This index is defined in terms of the local chart representation of the atlas of the Lagrangian Grassmannian manifold.
\begin{defn}\label{def:Q-Dui76}
$\alpha,\beta,\gamma \in \Lagr(V,\omega)$ and we assume that $\alpha \cap \beta=\gamma \cap \beta=(0)$. Then we define the quadratic form $Q(\alpha,\beta;\gamma): \alpha\to \R$ as follows
\[
Q(\alpha,\beta;\gamma)[u]=\omega(u, Cu) \qquad \textrm{ where } \qquad C:\alpha \to \beta \quad\textrm{ and }\quad \gamma=\Set{u+C\,u| u\in \alpha}.
\]
\end{defn}
\begin{defn}\label{def:kashi}
Let $\alpha,\beta,\gamma \in \Lagr(V,\omega)$, $\varepsilon \= \alpha \cap \beta + \beta \cap \gamma$ and let $\pi\=\pi_\varepsilon $ be the projection in the symplectic reduction of $V$ mod $\varepsilon $.   
  We term {\em triple index\/} the integer defined by
\begin{multline}\label{eq:triple}
\itriple(\alpha, \beta, \gamma)\= \coindex Q(\pi \alpha, \pi \beta; \pi \gamma)	+\dim (\alpha \cap \gamma) -\dim (\alpha\cap \beta \cap \gamma)\\[3pt]
\leq n-\dim (\alpha \cap \beta)-\dim( \beta \cap \gamma) + \dim (\alpha \cap \beta \cap \gamma).
\end{multline}
\end{defn}
\begin{rem}\label{rem:molto-utile-stima}
Definition \ref{def:kashi} is well-posed and we refer the interested reader to \cite[Lemma 2.4]{Dui76} and  \cite[Corollary 3.12 \& Lemma 3.13]{ZWZ18} for further details).  It is worth noticing that $Q(\pi \alpha, \pi \beta; \pi \gamma)$ is a quadratic form on $\pi\alpha$. Being the reduced space $V_\varepsilon $  a $2(n-\dim \varepsilon )$ dimensional subspace, it follows that inertial indices of  $Q(\pi \alpha, \pi \beta; \pi \gamma)$ are integers between $\{0, \ldots,n-\dim \varepsilon \}$.
\end{rem}
\begin{rem}
		We also observe that	for arbitrary Lagrangian subspaces $\alpha,\beta,\gamma$, the quadratic form  $Q(\alpha,\beta,\gamma)$ is well-defined and it is a quadratic form on $\alpha\cap(\beta+\gamma)$.
	Furthermore, we have 
    \[
    \coindex Q(\alpha,\beta,\gamma)=\coindex Q(\pi\alpha,\pi\beta,\pi\gamma).
    \]
    So, we can also define the triple index as 
	\[
	\itriple(\alpha, \beta, \gamma)\= \coindex Q( \alpha,  \beta;  \gamma)	+\dim (\alpha \cap \gamma) -\dim (\alpha\cap \beta \cap \gamma).
	\]

Authors in \cite[Lemma 3.2]{ZWZ18}  give a useful property for calculating such a quadratic form.
\begin{equation}\label{eq:invariance_Q}
\coindex Q(\alpha,\beta,\gamma)=\coindex Q(\beta,\gamma,\alpha)= \coindex Q(\gamma,\alpha,\beta)
=\iMor Q(\beta,\alpha,\gamma).
\end{equation}
\end{rem}
We observe that if $(\alpha,\beta)$ is a Lagrangian decomposition of $(V,\omega)$ and $\beta \cap \gamma=\{0\}$ then $\pi$ reduces to the identity and both  terms $\dim (\alpha \cap \gamma)$ and $\dim (\alpha\cap \beta \cap \gamma)$ drop down. In this way the triple index is nothing different from the the quadratic form $Q$ defining the local chart of the atlas of $\Lagr(V,\omega)$. It is possible to prove (cfr. \cite[proof of the Lemma 3.13]{ZWZ18}) that 
\begin{equation}\label{eq:kernel-dim-q-form}
	\dim(\alpha \cap \gamma) -\dim(\alpha \cap \beta \cap \gamma)= \nullity Q(\pi \alpha, \pi \beta; \pi \gamma),
\end{equation}
where we denoted by $\nullity Q$ the nullity (namely the kernel dimension of the quadratic form $Q$).
By summing up Equation \eqref{eq:triple} and Equation \eqref{eq:kernel-dim-q-form}, we finally get 
\begin{equation}\label{eq:triple-coindex-extended}
\itriple(\alpha, \beta, \gamma)= \noo{+}Q(\pi \alpha, \pi \beta; \pi \gamma)
\end{equation}
 where $\noo{+} Q$ denotes the so-called {\em extended coindex\/} or {\em generalized coindex\/} (namely the coindex plus the nullity) of the quadratic form $Q$. (Cfr. \cite[Lemma 2.4]{Dui76} for further details).

\begin{rem}\label{rem:Agrachev-comparison}
Given three Lagrangian subspaces $L_0, L_1, L_2$ \cite[Definition 1, pag. 2883]{ABB23}, authors define an index on $L_1\cap(L_0+L_2) $, named  {\em negative Maslov index} as follows
\[
i(L_0,L_1,L_2):=\iMor [m(L_0, L_1, L_2)]
\]
where $m$ is a  quadratic form. It's straightforward to check that, $m$ is related to the above introduced quadratic form $Q$ by: 
\[
m(L_0, L_1, L_2)= Q(L_0, L_2;L_1)= -Q(L_0, L_1;L_2)
\]
where the last equality follows by the symmetry properties of $Q$. By this follows that 
\[
\coiMor[Q(L_0,L_1;L_2)] = \iMor[-m(L_0,L_1,L_2)].
\]
\end{rem}

\begin{lem}\label{thm:properties}
Let $\lambda \in \mathscr C^1\big(J, \Lagr(V,\omega)\big)$. Then, for every $\mu \in \Lagr(V, \omega)$, we have 
\begin{enumerate}
	\item[\textrm{ \bf{(I)} }] $s\big(\lambda(a), \lambda(b); \lambda(a), \mu \big)= -\itriple\big(\lambda(b), \lambda(a),\mu\big)\leq 0$, 
	\item[\textrm{ \bf{(II)} }] $s\big(\lambda(a), \lambda(b); \lambda(b), \mu \big)= \itriple\big(\lambda(a), \lambda(b),\mu\big)\geq 0$.
\end{enumerate}
\end{lem}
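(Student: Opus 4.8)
The plan is to deduce both identities from the expression of the H\"ormander index as a difference of two triple indices, combined with the observation that $\itriple(\alpha,\alpha,\gamma)=0$; the sign constraints will then drop out of the general nonnegativity of the triple index. Concretely, I would invoke the identity
\[
s(\lambda_1,\lambda_2;\mu_1,\mu_2)=\itriple(\lambda_1,\mu_1,\mu_2)-\itriple(\lambda_2,\mu_1,\mu_2),
\]
valid for every continuous Lagrangian path $\lambda$ joining $\lambda_1$ to $\lambda_2$ and every pair $\mu_1,\mu_2\in\Lagr(V,\omega)$ (this is the content of \cite{ZWZ18}; cf.\ Proposition~\ref{thm:mainli}). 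Applying it to the path $\lambda$ of the statement, once with $(\mu_1,\mu_2)=(\lambda(a),\mu)$ and once with $(\mu_1,\mu_2)=(\lambda(b),\mu)$, yields
\[
s\big(\lambda(a),\lambda(b);\lambda(a),\mu\big)=\itriple\big(\lambda(a),\lambda(a),\mu\big)-\itriple\big(\lambda(b),\lambda(a),\mu\big)
\]
and
\[
s\big(\lambda(a),\lambda(b);\lambda(b),\mu\big)=\itriple\big(\lambda(a),\lambda(b),\mu\big)-\itriple\big(\lambda(b),\lambda(b),\mu\big),
\]
so the proof reduces to checking that $\itriple(\alpha,\alpha,\gamma)=0$ for all $\alpha,\gamma\in\Lagr(V,\omega)$.

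This vanishing I would read directly off Definition~\ref{def:kashi}. Taking $\beta=\alpha$, the subspace $\varepsilon=\alpha\cap\beta+\beta\cap\gamma=\alpha$ is Lagrangian, so $\varepsilon^{\omega}=\varepsilon$ and the symplectically reduced space $V_\varepsilon=\varepsilon^{\omega}/\varepsilon$ is trivial; hence $Q(\pi\alpha,\pi\beta;\pi\gamma)$ is a quadratic form on $\{0\}$ with $\coiMor Q(\pi\alpha,\pi\beta;\pi\gamma)=0$, while $\dim(\alpha\cap\gamma)-\dim(\alpha\cap\alpha\cap\gamma)=0$, whence $\itriple(\alpha,\alpha,\gamma)=0$. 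Feeding $\itriple(\lambda(a),\lambda(a),\mu)=0$ and $\itriple(\lambda(b),\lambda(b),\mu)=0$ into the two displays above gives exactly $s(\lambda(a),\lambda(b);\lambda(a),\mu)=-\itriple(\lambda(b),\lambda(a),\mu)$ and $s(\lambda(a),\lambda(b);\lambda(b),\mu)=\itriple(\lambda(a),\lambda(b),\mu)$. The remaining inequalities are then automatic: by Equation~\eqref{eq:triple-coindex-extended} every triple index is an extended coindex, hence $\ge 0$, so the right-hand side of (I) is $\le 0$ and that of (II) is $\ge 0$.

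I expect the genuine obstacle to be the input identity $s=\itriple(\lambda_1,\mu_1,\mu_2)-\itriple(\lambda_2,\mu_1,\mu_2)$ itself, if one prefers not to cite \cite{ZWZ18} as a black box. A self-contained derivation would place the computation in a local chart of the Lagrangian Grassmannian: by homotopy invariance of the H\"ormander index one may deform $\lambda$, keeping its endpoints fixed, to a path lying in the chart $\{L:L\cap W=0\}$ for a Lagrangian $W$ transversal to $\lambda_1$ and $\lambda_2$, represent it there by a path of symmetric operators, and compute $\iCLM(\mu_2,\lambda(t);t\in J)-\iCLM(\mu_1,\lambda(t);t\in J)$ via the crossing-form formula~\eqref{eq:iclm-crossings}; after a symplectic reduction absorbing the possible non-transversality of the $\mu_j$, the outcome is recognized as the stated difference of triple indices through Definitions~\ref{def:Q-Dui76} and~\ref{def:kashi}. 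This is precisely the technical (but otherwise routine) computation performed in \cite{ZWZ18}, and granting it, the rest of the argument is the purely algebraic bookkeeping above.
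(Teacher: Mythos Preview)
Your argument is correct. The paper itself does not give a proof of this lemma but simply refers the reader to \cite[Corollary~3.16]{ZWZ18}, so your derivation is in fact more self-contained than what appears in the text: you deduce the result from Proposition~\ref{thm:mainli} (already stated in the paper) together with the elementary vanishing $\itriple(\alpha,\alpha,\gamma)=0$ and the nonnegativity of the triple index from Equation~\eqref{eq:triple-coindex-extended}. Both routes ultimately rest on \cite{ZWZ18}, but yours uses only the pieces already imported into the paper, whereas the paper's one-line proof points to a further corollary in that reference; your approach therefore buys the reader an explicit argument at essentially no extra cost.
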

\begin{proof}
	For the proof, we refer the interested reader to \cite[Corollary 3.16]{ZWZ18}.
\end{proof}
The next result, which is the main result of \cite{ZWZ18}, allows us to reduce the computation of the H\"ormander index to the computation of the triple index. 
\begin{prop}{\bf \cite[Theorem 1.1]{ZWZ18}\/}\label{thm:mainli} 
Let $(V,\omega)$ be a $2n$-dimensional symplectic space and let  $\lambda_1, \lambda_2, \mu_1, \mu_2 \in \Lagr(V,\omega)$. Under the above notation, we get 
\begin{equation}
s(\lambda_1, \lambda_2,\mu_1,\mu_2)=\itriple(\lambda_1,\lambda_2,\mu_2)- 	\itriple(\lambda_1,\lambda_2,\mu_1)\\
=\itriple(\lambda_1,\mu_1,\mu_2)- \itriple(\lambda_2,\mu_1,\mu_2)
\end{equation}
\end{prop}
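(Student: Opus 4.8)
The plan is to deduce the full H\"ormander--index formula from the two special cases recorded in Lemma~\ref{thm:properties} by a telescoping argument in one slot of $s$, and then to transfer the resulting identity to the other pair of Lagrangians using the reflection property of $s$ together with the cyclic relation for the triple index (Lemma~\ref{lem:triple_diff_circle_permu}). The only genuinely analytic input is Lemma~\ref{thm:properties} itself; everything after that is bookkeeping with dimensions of intersections.

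For the first equality I would fix a continuous path $\lambda$ of Lagrangian subspaces with $\lambda(a)=\lambda_1$ and $\lambda(b)=\lambda_2$, and set $f(\mu)\=\iCLM(\mu,\lambda(t);\,t\in[a,b])$. By Definition~\ref{def:hormander} we then have $s(\lambda_1,\lambda_2;\mu_1,\mu_2)=f(\mu_2)-f(\mu_1)$, and the right-hand side is independent of the chosen path, so $s$ telescopes through any intermediate Lagrangian. Inserting the intermediate Lagrangian $\lambda_2=\lambda(b)$ and invoking part (II) of Lemma~\ref{thm:properties}, which says exactly that $f(\mu)-f(\lambda_2)=s(\lambda_1,\lambda_2;\lambda_2,\mu)=\itriple(\lambda_1,\lambda_2,\mu)$, I obtain
\[
s(\lambda_1,\lambda_2;\mu_1,\mu_2)=\bigl(f(\mu_2)-f(\lambda_2)\bigr)-\bigl(f(\mu_1)-f(\lambda_2)\bigr)=\itriple(\lambda_1,\lambda_2,\mu_2)-\itriple(\lambda_1,\lambda_2,\mu_1).
\]
(Part (I) of Lemma~\ref{thm:properties} combined with the antisymmetry $s(\lambda_1,\lambda_2;\mu_1,\mu_2)=-s(\lambda_1,\lambda_2;\mu_2,\mu_1)$ gives an equivalent route.)

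For the second equality I would apply the first one with the two pairs interchanged, $s(\mu_1,\mu_2;\lambda_1,\lambda_2)=\itriple(\mu_1,\mu_2,\lambda_2)-\itriple(\mu_1,\mu_2,\lambda_1)$, and substitute into the reflection identity $s(\lambda_1,\lambda_2;\mu_1,\mu_2)=-s(\mu_1,\mu_2;\lambda_1,\lambda_2)+\sum_{j,k\in\{1,2\}}(-1)^{j+k+1}\dim(\lambda_j\cap\mu_k)$ from the list of properties of the H\"ormander index. It then remains to observe that, by Lemma~\ref{lem:triple_diff_circle_permu} applied to the ordered triple $(\lambda_i,\mu_1,\mu_2)$,
\[
\itriple(\lambda_i,\mu_1,\mu_2)-\itriple(\mu_1,\mu_2,\lambda_i)=\dim(\lambda_i\cap\mu_2)-\dim(\lambda_i\cap\mu_1),\qquad i=1,2,
\]
and that subtracting the $i=2$ instance from the $i=1$ instance reproduces exactly the double sum above. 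Feeding this back turns the right-hand side of the reflection identity into $\itriple(\lambda_1,\mu_1,\mu_2)-\itriple(\lambda_2,\mu_1,\mu_2)$, which is the claim.

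The crux, then, is Lemma~\ref{thm:properties}, i.e.\ the two cases in which one of the $\mu$'s coincides with an endpoint of the $\lambda$-path; that is where a self-contained argument would have to do real work. I would first reduce to the generic situation in which $\lambda_1,\lambda_2,\mu$ are pairwise transversal (extending afterwards by a homotopy/continuity argument), then choose a representative path $\lambda$ living in a single local chart of the Lagrangian Grassmannian based at a common transversal, compute $\iCLM(\lambda(a),\lambda(t))$ from the crossing-form formula~\eqref{eq:iclm-crossings}, identify the boundary crossing form at $t=a$ with the quadratic form $Q$ of Definition~\ref{def:Q-Dui76}, and match it against the extended-coindex description~\eqref{eq:triple-coindex-extended} of $\itriple$. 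I expect the main obstacle to be keeping the sign and nullity conventions straight --- the order of the arguments in the crossing form $\Gamma(\ell_2,\ell_1,\cdot)$ versus $\Gamma(\ell_1,\ell_2,\cdot)$, and the $\coiMor$ versus $\iMor$ boundary contributions in~\eqref{eq:iclm-crossings} --- since a single transposition there flips the entire formula; by contrast the reduction outlined in the preceding paragraphs is routine.
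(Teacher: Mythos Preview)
The paper does not actually prove this proposition: it is quoted verbatim as \cite[Theorem~1.1]{ZWZ18} and stated without argument, so there is no ``paper's own proof'' to compare against. Your reduction of the general H\"ormander-index formula to the two endpoint cases of Lemma~\ref{thm:properties} via telescoping is correct, and the passage from the first displayed identity to the second via the reflection property of $s$ together with Lemma~\ref{lem:triple_diff_circle_permu} checks out line by line (I verified the dimension bookkeeping).

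One caution on logical dependencies: in \cite{ZWZ18} the result you are invoking as Lemma~\ref{thm:properties} is their Corollary~3.16, which in that paper is derived \emph{after} (and quite possibly \emph{from}) their Theorem~1.1 --- the very statement you are trying to prove. So taken at face value your argument is circular. You clearly anticipate this in your final paragraph, where you sketch an independent proof of Lemma~\ref{thm:properties} via crossing forms in a single chart; that is indeed the honest route, and your outline (reduce to pairwise transversal, compute $\iCLM(\lambda(a),\lambda(t))$ from~\eqref{eq:iclm-crossings}, identify the boundary contribution with $Q$ of Definition~\ref{def:Q-Dui76}) is the right shape. Just be aware that the non-transversal case is not a mere ``homotopy/continuity'' afterthought: the triple index is an integer but $\iCLM$ can jump at degenerate endpoints, so the limiting argument needs the extended-coindex formulation~\eqref{eq:triple-coindex-extended} to absorb the nullity correctly. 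This is precisely where \cite{ZWZ18} spends most of its effort, and you should expect the same.
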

\begin{rem}
We emphasize that no transversality conditions are assumed on the  Lagrangian subspaces in Proposition \ref{thm:mainli} 
\end{rem}


\subsection{A quick recap on the  spectral flow}\label{sec:sf}

The aim of this section is to briefly recall the Definition and the main
properties of the spectral
flow for a continuous path of closed selfadjoint Fredholm  operators  $\mathcal{CF}^{sa}(H )$ on the Hilbert space $H$.
Our basic reference is \cite{HP17}  and references therein.

Let $H $ be a separable complex Hilbert space and let
$A: D (A) \subset H  \to H $ be  a  selfadjoint
Fredholm 
operator. By the Spectral decomposition Theorem (cf., for instance,
\cite[Chapter III,
Theorem 6.17]{Kat80}), there is an orthogonal decomposition $
 H = E_-(A)\oplus E_0(A) \oplus E_+(A),$
that reduces the operator $A$
and has the property that
\[
 \sigma(A) \cap (-\infty,0)=\sigma\big(A_{E_-(A)}\big), \quad
 \sigma(A) \cap \{0\}=\sigma\big(A_{E_0(A)}\big),\quad
 \sigma(A) \cap (0,+\infty)=\sigma\big(A_{E_+(A)}\big).
\]
\begin{defn}\label{def:essential}
Let $A \in \mathcal{CF}^{sa}(H )$. We  term $A$ {\em essentially
positive\/}
if $\sigma_{ess}(A)\subset (0,+\infty)$, {\em essentially negative\/} if
$\sigma_{ess}(A)\subset (-\infty,0)$ and finally
{\em strongly indefinite\/} respectively if $\sigma_{ess}(A) \cap (-\infty,
0)\not=
\emptyset$ and $\sigma_{ess}(A) \cap ( 0,+\infty)\not=\emptyset$.
\end{defn}
\noindent
If $\dim E_-(A)<\infty$,
we define its {\em Morse index\/}
as the integer denoted by $\iindex{A}$ and defined as $
 \iindex{A} \= \dim E_-(A).$
Given $A \in\cfsa(H )$, for  $a,b \notin
\sigma(A)$ we set
\[
\mathcal P_{J}(A)\=\Real\left(\dfrac{1}{2\pi\, i}\int_\gamma
(\lambda-A)^{-1} d\, \lambda\right)
\]
where $\gamma$ is the circle of radius $\frac{b-a}{2}$ around the point
$\frac{a+b}{2}$. We recall that if
$J\subset \sigma(A)$ consists of  isolated eigenvalues of finite type then
$
 \Imm \mathcal P_{J}(A)= E_{J}(A)\= \bigoplus_{\lambda \in (a,b)}\ker
(\lambda -A);
$
(cf. \cite[Section XV.2]{GGK90}, for instance) and $0$ either belongs in the
resolvent set of $A$ or it is an isolated eigenvalue of finite multiplicity.
 The next result allows us to  define the spectral flow for
gap
continuous paths in  $\cfsa(H )$.
\begin{prop}\label{thm:cor2.3}
 Let $A_0 \in \cfsa(H )$ be fixed.
 \begin{enumerate}
  \item[(i)] There exists a positive real number $a \notin \sigma(A_0)$ and an
open neighborhood $\mathscr N \subset  \cfsa(H )$ of $A_0$ in the gap
topology such that $\pm a \notin
\sigma(A)$ for all $A \in  \mathscr N$ and the map
 \[
  \mathscr N \ni A \longmapsto \mathcal P_{[-a,a]}(A) \in \Lin(H )
 \]
is continuous and the projection $\mathcal P_{[-a,a]}(A)$ has constant finite
rank for all $t \in \mathscr N$.
 \item[(ii)] If $\mathscr N$ is a neighborhood as in (i) and $-a \leq c \leq d
\leq a$ are such that $c,d \notin
 \sigma(A)$ for all $A \in \mathscr N$, then $A \mapsto \mathcal P_{[c,d]}(A)$
is
continuous on $\mathscr N$.
 Moreover the rank of $\mathcal P_{[c,d]}(A) \in \mathscr N$ is finite and
constant.
 \end{enumerate}
\end{prop}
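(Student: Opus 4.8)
The plan is to deduce the whole statement from two classical facts: the gap‑continuity of the resolvent (from \cite[Ch.~IV]{Kat80}, as used in \cite{HP17}) and the Riesz holomorphic functional calculus. First I would fix the contour. Since $A_0\in\cfsa(H)$, the point $0$ is either in the resolvent set of $A_0$ or an isolated eigenvalue of finite multiplicity; set $r\=\dim E_0(A_0)<\infty$. In either case there is $a>0$ with $\sigma(A_0)\cap[-a,a]\subseteq\{0\}$, so in particular $\pm a\notin\sigma(A_0)$. Let $\gamma$ be the circle of radius $a$ centred at $0$ entering the definition of $\mathcal P_{[-a,a]}$. As $A_0$ is selfadjoint, $\sigma(A_0)\subset\R$, hence $\gamma\cap\sigma(A_0)=\{\pm a\}\cap\sigma(A_0)=\emptyset$: the compact set $\gamma$ lies in $\C\setminus\sigma(A_0)$, and $\mathcal P_{[-a,a]}(A_0)$ is the Riesz idempotent of the spectral set $\sigma(A_0)\cap[-a,a]$, of rank $r$.

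The key input I would invoke is that for every $\zeta_0\notin\sigma(A_0)$ the set of closed operators $A$ with $\zeta_0\notin\sigma(A)$ is open in the gap topology and $A\mapsto(A-\zeta_0)^{-1}$ is norm‑continuous there; combined with the Neumann expansion of $(A-\zeta)^{-1}$ about $\zeta_0$, this yields joint norm‑continuity and local boundedness of $(A,\zeta)\mapsto(\zeta-A)^{-1}$ on $\{(A,\zeta):\zeta\notin\sigma(A)\}$. Since $\{A_0\}\times\gamma$ is a compact subset of this open set, I would extract a gap‑neighborhood $\mathscr N$ of $A_0$ with $\gamma\subset\C\setminus\sigma(A)$ for all $A\in\mathscr N$ (so $\pm a\notin\sigma(A)$), and such that $\sup_{\zeta\in\gamma}\|(\zeta-A)^{-1}-(\zeta-A_1)^{-1}\|\to0$ as $A\to A_1$ for every $A_1\in\mathscr N$. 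It follows that $A\mapsto\mathcal P_{[-a,a]}(A)=\Real\bigl(\tfrac{1}{2\pi i}\int_\gamma(\zeta-A)^{-1}\,d\zeta\bigr)$ is norm‑continuous on $\mathscr N$, because $\gamma$ has finite length and $X\mapsto\Real X$ is continuous. Since $\gamma\cap\sigma(A)=\emptyset$, each $\mathcal P_{[-a,a]}(A)$ is an idempotent; idempotents at norm‑distance $<1$ are conjugate, hence of equal rank, so after shrinking $\mathscr N$ so that $\|\mathcal P_{[-a,a]}(A)-\mathcal P_{[-a,a]}(A_0)\|<1$ I would conclude $\rank\mathcal P_{[-a,a]}(A)=r<\infty$ for all $A\in\mathscr N$. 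This proves (i); note that the finiteness of the rank comes for free from the conjugacy, without having to argue separately about the spectrum of the perturbed operator.

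For (ii) I would run the same scheme with the circle $\gamma'$ of radius $(d-c)/2$ about $(c+d)/2$: selfadjointness gives $\gamma'\cap\sigma(A)=\{c,d\}\cap\sigma(A)=\emptyset$ for all $A\in\mathscr N$ by hypothesis, so the compactness argument yields norm‑continuity of $A\mapsto\mathcal P_{[c,d]}(A)$ on all of $\mathscr N$. Since $[c,d]\subseteq[-a,a]$ and $\pm a\notin\sigma(A)$, the spectral subset enclosed by $\gamma'$ sits inside that enclosed by $\gamma$, whence $\mathcal P_{[c,d]}(A)=\mathcal P_{[-a,a]}(A)\,\mathcal P_{[c,d]}(A)$ and $\rank\mathcal P_{[c,d]}(A)\le\rank\mathcal P_{[-a,a]}(A)=r$ for all $A\in\mathscr N$; idempotency together with the norm‑distance criterion then makes the rank locally constant, and constant after one further shrinking of $\mathscr N$.

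I do not expect a genuine obstacle here, since this is a standard preliminary: the only step that requires care is importing the correct gap‑continuity statement for resolvents from \cite{Kat80} and upgrading pointwise resolvent continuity to uniform continuity along the compact contours $\gamma$ and $\gamma'$ via a tube/Lebesgue‑number argument; everything else is holomorphic functional calculus together with the elementary perturbation theory of idempotents.
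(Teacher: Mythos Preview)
Your argument is correct and is precisely the standard route to this result; the paper itself gives no proof but simply refers to \cite[Proposition~2.10]{BLP05}, whose proof proceeds along the same lines you sketch (gap-continuity of the resolvent from \cite[Ch.~IV]{Kat80}, compactness of the contour, Riesz calculus, and rank rigidity of nearby idempotents).

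One small remark on (ii): you write ``constant after one further shrinking of $\mathscr N$'', but the statement fixes $\mathscr N$ as the neighborhood produced in (i). The clean way to close this is to observe that in (i) you are free to take $\mathscr N$ to be a gap-metric ball, hence connected; then in (ii) the rank of $\mathcal P_{[c,d]}(A)$ is a locally constant integer-valued function on the connected set $\mathscr N$, so it is globally constant without any further shrinking.
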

\begin{proof}
For the proof of this result we refer the interested reader to
\cite[Proposition 2.10]{BLP05}.
\end{proof}
Let $ A :J \to \cfsa(H )$ be a gap continuous path.  As
consequence
of Proposition \ref{thm:cor2.3}, for every $t \in J$ there exists $a>0$ and
an open
connected neighborhood $\mathscr N_{t,a} \subset \cfsa(H )$ of
$\mathcal
A(t)$
such that $\pm a \notin \sigma(A)$ for all $A\in \mathscr N_{t,a}$ and the map
$\mathscr N_{t,a} \in A \longmapsto \mathcal P_{[-a,a]}(A) \in \mathcal
B$
is continuous and hence $ \rk\left(\mathcal P_{[-a,a]}(A)\right)$ does not
depends on $A \in \mathscr N_{t,a}$. Let us consider the open covering
of the interval $J$ given by the
pre-images of the neighborhoods $\mathcal
N_{t,a}$ through $ A $ and, by choosing a sufficiently fine partition of
the interval $J$ having diameter less than the Lebesgue number
of the covering, we can find  $a=:t_0 < t_1 < \dots < t_n:=b$,
operators $T_i \in \cfsa(H )$ and
positive real numbers $a_i $, $i=1, \dots , n$ in such a way the restriction of
the path $ A $ on the
interval $[t_{i-1}, t_i]$ lies in the neighborhood $\mathscr N_{t_i, a_i}$ and
hence the
$\dim E_{[-a_i, a_i]}( A _t)$ is constant for $t \in [t_{i-1},t_i]$,
$i=1, \dots,n$.
\begin{defn}\label{def:spectral-flow-unb}
The \emph{spectral flow of $ A $} (on the interval $J$) is defined by
\[
 \spfl( A , J)\=\sum_{i=1}^N \dim\,E_{[0,a_i]}( A _{t_i})-
 \dim\,E_{[0,a_i]}( A _{t_{i-1}}) \in \Z.
\]
\end{defn}
The spectral flow as given in Definition \ref{def:spectral-flow-unb} is
well-defined
(in the sense that it is independent either on the partition or on the $a_i$)
and only depends on
the continuous path $ A $. Here We list one of the useful properties of the spectral flow.
\begin{itemize}
 \item[]  {\bf (Path Additivity)\/} If $ A _1,\mathcal
A_2: J \to
 \cfsa(H )$ are two continuous path such that
$ A _1(b)= A _2(a)$, then
 $
  \spfl( A _1 * A _2) = \spfl( A _1)+\spfl( A _2).
 $
\end{itemize}
As already observed, the spectral flow, in general,  depends on the whole path
and not
just on the ends. However, if the path has a special form, it actually depends
on the
end-points. More precisely, let  $ A  , B \in \cfsa(H )$
and let $\widetilde{ A }:J \to \cfsa(H )$ be the path
pointwise defined by $\widetilde{ A }(t)\= A + \widetilde{\mathcal
B}(t)$  where $
\widetilde{ B }$ is any continuous curve of $ A $-compact
operators parametrised on $J$
such that  $\widetilde{ B }(a)\=0$ and  $ \widetilde{ B }(b)\=
 B $. In this case,
the spectral flow depends of the
path $\widetilde A$, only on the endpoints (cfr. \cite{ZL99} and reference
therein).
\begin{rem}
 It is worth noticing that, since every operator $\widetilde{ A }(t)$ is
a compact perturbation of a
 a fixed one, the path $\widetilde{ A }$ is actually a continuous path
into $\Lin(W ; H )$,
 where $W \=D ( A )$.
\end{rem}
\begin{defn}\label{def:rel-Morse -index}(\cite[Definition 2.8]{ZL99}).
 Let $ A  , B \in \cfsa(H )$ and we assume that $ 
B$ is
 $ A $-compact (in the sense specified above). Then the
{\em  relative Morse index of the pair $ A $, $ A + B $\/}
is
defined by $
  \irel( A ,  A + B )=-\spfl(\widetilde{ A };J)$
where $\widetilde{ A }\= A + \widetilde{ B }(t)$ and where
$
\widetilde{ B }$ is any continuous curve parametrised on $J$
of $ A $-compact operators such that
$\widetilde{ B }(a)\=0$ and
$ \widetilde{ B }(b)\=  B $.
\end{defn}
\noindent
In the special case in which the Morse index of both operators $ A $ and
$ A + B $ are
finite, then
\begin{equation}\label{eq:miserve}
\irel( A ,  A + B )=\iMor(A  +
B)-\iMor(A).
\end{equation}

Let  $W , H $ be separable Hilbert spaces with a dense and
continuous
inclusion $W  \hookrightarrow H $ and let
$ A :J \to \cfsa(H )$  having fixed domain $W $. We
assume that $ A $ is
a continuously differentiable path  $ A : J \to \cfsa(H )$
and
we denote by $\dot{ A }_{\lambda_0}$ the derivative of
$ A _\lambda$ with respect to the parameter $\lambda \in J$ at
$\lambda_0$.
\begin{defn}\label{def:crossing-point}
 An instant $\lambda_0 \in J$ is called a {\em crossing instant\/} if
$\ker\,  A _{\lambda_0} \neq 0$. The
 crossing form at $\lambda_0$ is the quadratic form defined by
\begin{equation}
 \Gamma( A , \lambda_0): \ker  A _{\lambda_0} \to \R, \quad
\Gamma( A , \lambda_0)[u] =
\langle \dot{ A }_{\lambda_0}\, u, u\rangle_H .
\end{equation}
Moreover a  crossing $\lambda_0$ is called {\em regular\/}, if $\Gamma(\mathcal
A, \lambda_0)$ is nondegenerate.
\end{defn}
We recall that there exists $\varepsilon >0$ such that   $ A  +\delta \,
\Id_H $ has only regular crossings
  for almost every $\delta \in (-\varepsilon, \varepsilon)$. In the special case in which all crossings are regular, then the spectral flow
can be easily computed through  the
crossing forms. More precisely the following result  holds.
\begin{prop}\label{thm:spectral-flow-formula}
 If $ A :J \to \cfsa(W , H )$ has only regular
crossings then they are in a finite
 number and
 \[
  \spfl( A , J) = -\mathrm{\iMor}{\left[\Gamma( A ,a)\right]}+
\sum_{t_0 \in (a,b)}
  \sgn\left[\Gamma( A , t_0)\right]+
  \coiindex{\Gamma( A ,b)}
 \]
where the sum runs over all the crossing instants.
\end{prop}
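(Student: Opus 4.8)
The plan is to prove finiteness of the crossings first, then to localize the spectral flow at each crossing via path additivity, replace the operator path by a finite--dimensional path of symmetric endomorphisms using the spectral projection of Proposition~\ref{thm:cor2.3}, and finally read off the three terms from the defining formula of Definition~\ref{def:spectral-flow-unb} by an elementary first--order perturbation analysis of the branching eigenvalues.

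First I would fix a crossing instant $t_0$ and invoke Proposition~\ref{thm:cor2.3}: there are $a>0$ and a gap--neighbourhood $\mathscr N$ of $A_{t_0}$ on which $\mathcal P_{[-a,a]}(\cdot)$ is continuous of constant finite rank. Restricting the parameter to an interval $(t_0-\varepsilon,t_0+\varepsilon)$ on which the path lies in $\mathscr N$, and trivializing the finite--rank bundle $t\mapsto \image\,\mathcal P_{[-a,a]}(A_t)$ over that (contractible) interval, one obtains a $C^1$ path $\widetilde A(t)$ of symmetric endomorphisms of a fixed finite--dimensional space with $\ker\widetilde A(t)=\ker A_t$ and with $\dot{\widetilde A}(t_0)$ restricted to $\ker A_{t_0}$ equal to the crossing form $\Gamma(A,t_0)$ of Definition~\ref{def:crossing-point}; this last identification is the standard Kato first--order perturbation statement for a $C^1$ family of self--adjoint operators with fixed domain, applied inside the reduction. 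Regularity of the crossing, i.e.\ nondegeneracy of $\Gamma(A,t_0)$, forces each branching eigenvalue $\lambda_k(t)$ of $\widetilde A(t)$ (those with $\lambda_k(t_0)=0$) to satisfy $\dot\lambda_k(t_0)\neq 0$, hence $\lambda_k(t)\neq 0$ for $0<|t-t_0|<\varepsilon$; since the eigenvalues nonzero at $t_0$ stay nonzero nearby, $\ker A_t=0$ on a punctured neighbourhood, so crossings are isolated, and finiteness follows from compactness of $J$.

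Next I would apply the Path Additivity property of the spectral flow recalled in Section~\ref{sec:sf} to write $\spfl(A,J)$ as a finite sum of contributions of subintervals chosen so that each crossing is either interior to exactly one subinterval or an endpoint of $J$; on a subinterval lying inside such a neighbourhood $\mathscr N$ the spectral flow of $A$ equals that of the reduced path $\widetilde A$, because only eigenvalues in $[-a,a]$ enter Definition~\ref{def:spectral-flow-unb}. For an interior crossing $t_0$, as $t$ increases through $t_0$ exactly $\coiMor\,\Gamma(A,t_0)$ branching eigenvalues pass from negative to positive and $\iMor\,\Gamma(A,t_0)$ from positive to negative, so the net change of $\dim E_{[0,a]}(\widetilde A(\cdot))$ is $\coiMor\,\Gamma(A,t_0)-\iMor\,\Gamma(A,t_0)=\sgn\,\Gamma(A,t_0)$. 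For the left endpoint $t_0=a$ all $\dim\ker A_a=\coiMor\,\Gamma(A,a)+\iMor\,\Gamma(A,a)$ branching eigenvalues vanish at $a$ and split for $t>a$, so $\dim E_{[0,a]}$ drops from $\dim\ker A_a$ to $\coiMor\,\Gamma(A,a)$, contributing $-\iMor\,\Gamma(A,a)$; symmetrically, by the sign of $\dot\lambda_k(b)$, the $\coiMor\,\Gamma(A,b)$ branching eigenvalues that are negative just before $b$ become $0$ at $b$, contributing $+\coiMor\,\Gamma(A,b)$. Adding all contributions gives the stated formula.

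The only point requiring genuine care is the identification of $\dot{\widetilde A}(t_0)$ restricted to $\ker A_{t_0}$ with $\Gamma(A,t_0)$, together with the accompanying claim that the signs of the derivatives of the branching eigenvalues are governed by the inertia indices of this restricted form; once this first--order perturbation input is in place, the remainder is bookkeeping against Definition~\ref{def:spectral-flow-unb}.
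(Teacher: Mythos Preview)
Your proposal is correct and follows precisely the Robbin--Salamon strategy the paper invokes: the paper's own proof is simply the one--line reference ``follows by arguing as in \cite{RS95}'', and what you have written is an accurate outline of that argument (finite--rank reduction via Proposition~\ref{thm:cor2.3}, Kato first--order perturbation to identify $\dot{\widetilde A}(t_0)|_{\ker A_{t_0}}$ with the crossing form, then bookkeeping against Definition~\ref{def:spectral-flow-unb}). Your endpoint computations are carried out with the correct signs, and the point you flag as requiring care is indeed the only nontrivial input.
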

\begin{proof}
The proof of this result follows by arguing as in \cite{RS95}. This concludes the proof.
\end{proof}

\section*{Conflict of Interest Statement}
The authors declare that there is no conflict of interest regarding the publication of this manuscript.

\section*{Data Availability Statement}
No data were generated or analyzed in the preparation of this manuscript.

\vskip.5truecm

\noindent

\begin{flushleft} 
Dr. Daniele Garrisi\\
School of Mathematical Sciences  \\
University of Nottingham Ningbo China\\
199 Taikang East Road\\
Ningbo, 315100, P. R. China \\
E-mail: \texttt{daniele.garrisi@nottingham.edu.cn}
\end{flushleft}

\vskip.5truecm

\begin{flushleft}
Prof. Alessandro Portaluri\\
Full professor of Mathematics
University of Turin (DISAFA)\\
Largo Paolo Braccini, 2 \\
10095 Grugliasco, Torino (Italy)\\
Website: \texttt{https://portalurialessandro.wordpress.com/}\\
E-mail: \texttt{alessandro.portaluri@unito.it}\\
\medskip
Visiting Professor at 
New York University Abu Dhabi\\
Saadiyat Marina District - Abu Dhabi (UAE)\\
E-mail: \texttt{ap9453@nyu.edu}\\
\end{flushleft}

\vskip1truecm

\begin{flushleft} 
Prof. Li Wu \\
School of Mathematics  \\
Shandong University\\
Jinan, 250100, P. R. China \\
E-mail: \texttt{vvvli@sdu.edu.cn}
\end{flushleft}

\end{document}